\documentclass{article}
\usepackage[T1]{fontenc}
\usepackage{titlesec}
\usepackage[utf8]{inputenc}
\usepackage{amsmath}
\usepackage{amssymb}
\usepackage{amsfonts}
\usepackage{amsthm}
\usepackage{geometry}
\usepackage{babel}
\usepackage{xcolor}
\newtheorem{lemma}{Lemma}
\newtheorem{theorem}{Theorem}

\newtheorem*{corollary}{Corollary}
\numberwithin{equation}{section}
\geometry{legalpaper, lmargin=1in, rmargin=1in}

\title{PhD}
\author{}
\date{January 2021}

\begin{document}

\begin{center}
    \textbf{Universality for Random Matrices}
\end{center}

\begin{center}
     Simona Diaconu\footnote{Courant Institute, New York University, simona.diaconu@nyu.edu}
\end{center}

\begin{abstract}
    Traces of large powers of real-valued Wigner matrices are known to have Gaussian fluctuations: for \(A=\frac{1}{\sqrt{n}}(a_{ij})_{1 \leq i,j \leq n}\in \mathbb{R}^{n \times n}, A=A^T\) with \((a_{ij})_{1 \leq i \leq j \leq n}\) i.i.d., symmetric, subgaussian, \(\mathbb{E}[a^{2}_{11}]=1,\) and \(p=o(n^{2/3}),\) as \(n,p \to \infty,\) \(\frac{\sqrt{\pi}}{2^{p}}(tr(A^p)-\mathbb{E}[tr(A^p)]) \Rightarrow N(0,1).\) This work shows the entries of \(A^{2p},\) properly scaled, also have asymptotically normal laws when \(n \to \infty, p=n^{o(1)}:\) the normalizations of the diagonal entries depend on \(\mathbb{E}[a_{11}^4],\) contributions that become negligible as \(p \to \infty,\) whereas their counterparts in \(A^{2p+1}\) depend on all the moments of \(a_{11}\) when \(p\) is bounded or the moments grow fast relatively to \(p.\) This result demonstrates large powers of Wigner matrices are roughly Wigner matrices with normal entries when \(a_{11} \overset{d}{=} -a_{11},\mathbb{E}[a^{2}_{11}]=1, \mathbb{E}[|a_{11}|^{8+\epsilon_0}] \leq C(\epsilon_0),\) providing another perspective on eigenvector universality, which until now has been justified primarily via local laws. The last part of this paper finds the first-order terms of traces of Wishart matrices in the random matrix theory regime, rendering yet another connection between Wigner and Wishart ensembles, as well as an avenue to extend the results herein for the former to the latter. The primary tools employed 
    are the method of moments and a simple identity the Catalan numbers satisfy. 
\end{abstract}


\section{Introduction}\label{intro}

Random matrices are employed in a plethora of disciplines, including statistics, physics, genetics, and computer science. Two of the most commonly studied families are Wigner and Wishart ensembles: the former are named after Eugene Wigner, who proposed them  in \(1955\) as a model for the organization of heavy nuclei (\cite{wigner}), while the latter were introduced by John Wishart in \(1928,\) his main motivation being multivariate populations, i.e., observations or feature measurements (\cite{wishart}). Initially the entries of these matrices were assumed to be normally distributed (real- or complex-valued), and there has been a great interest in understanding the sensitivity of these results 
to their laws, in particular, how much Gaussianity could be weakened. 
Mehta~\cite{mehta} conjectured in \(1991\) that the asymptotic behavior of large random matrices is universal: for each family of ensembles, it should depend on few moments of the entry distributions. Ever since, considerable progress has been made in this direction, primarily with three tools, \((i)\) orthogonal polynomials (bulk eigenvalues), \((ii)\) method of moments (edge eigenvalues), and \((iii)\) local laws (most of the eigenspectrum and its corresponding eigenvectors): a work that does not fit within these categories is Johansson's~\cite{johansson}, where the author deals with matrices having a Gaussian component and computes eigenvalue densities exactly. Inasmuch as the results of this paper rely on the method of moments and are concerned primarily with eigenvectors, the reader is referred to \cite{bleher}, \cite{deift1}, \cite{deift2}, \cite{pasturscherbina} for \((i),\) while \((ii)\) and \((iii)\) are looked at next in more detail in the context of Wigner matrices, Wishart ensembles being discussed subsequently. Consider solely real-valued Wigner matrices, i.e., \(A=\frac{1}{\sqrt{n}}(a_{ij})_{1 \leq i,j \leq n} \in \mathbb{R}^{n \times n}, A=A^T\) with \((a_{ij})_{1 \leq i \leq j \leq n}\) i.i.d. random variables (results for their complex-valued counterparts exist as well), and denote by \(\lambda_1(M) \geq \lambda_2(M) \geq ... \geq \lambda_n(M)\) the eigenvalues of a symmetric matrix \(M \in \mathbb{R}^{n \times n}\) with corresponding eigenvectors \((u_i)_{1 \leq i \leq n},\) chosen to form an orthonormal basis of \(\mathbb{R}^m,\) 
and \(u_{ik}=(u_i)_{k1}\) for \(1 \leq i,k \leq n.\)
\par
The second technique \((ii),\) combinatorial in nature, is based on the seminal work of Sinai and Soshnikov~\cite{sinaisosh}, where the authors computed the order of \(\mathbb{E}[tr(A^p)]\) for \(p=o(n^{1/2})\) under the assumption that the distribution of \(a_{11}\) is symmetric and subgaussian of bounded norm (i.e., there exists \(C>0\) such that \(\mathbb{E}[a_{11}^{2k}] \leq (Ck)^{k}\) for all \(k \in \mathbb{N}\)). When \(\mathbb{E}[a^2_{11}]=1, p=o(n^{1/2}),\) as \(n,p \to \infty\)
\begin{equation}\label{tracesinaisosh}
    \mathbb{E}[tr(A^p)]=\begin{cases}
    0, & \hspace{0.4cm}p=2s-1, s \in \mathbb{N},\\
    \frac{4^s}{\sqrt{\pi}} \cdot \frac{n}{s^{3/2}}(1+o(1)), & \hspace{0.4cm} p=2s, s \in \mathbb{N}, 
    \end{cases}
\end{equation} 
a convergence employed to derive the asymptotic behavior of \(tr(A^p)\) in the aforesaid regime, 
\begin{equation}\label{cltsinaisosh}
    \frac{\sqrt{\pi}}{2^{p}}(tr(A^p)-\mathbb{E}[tr(A^p)]) \Rightarrow N(0,1)
\end{equation}
(see main theorem in \cite{sinaisosh}).
In an ensuing paper \cite{sinaisosh2}, they extended these results to \(p=o(n^{2/3}),\) and Soshnikov~\cite{sosh} further exploited the counting device underlying (\ref{tracesinaisosh}) to justify the joint distribution of the largest \(k\) eigenvalues of \(A\) is universal for \(k\) fixed: it depends exclusively on the first two moments of \(a_{11}.\)
\par
The third technique \((iii),\) analytical in nature, was introduced by Erdös, Schlein, and Yau in \cite{locallaws1}, and has ever since been widely employed to derive the asymptotic behavior of both eigenvalues and eigenvectors of random matrices (\cite{locallaws3}, \cite{locallaws4}, \cite{locallaws2}, \cite{sinekernels}, \cite{erdospeche}, etc.). Its gist is an interpolation between \(A\) and \(A^{G},\) a real-valued Wigner matrix whose entries have Gaussian distributions, that preserves the asymptotic laws of the eigenvalues and eigenvectors, the crucial ingredient being an entry-wise control of \((A-zI)^{-1}\) for \(z \in \mathbb{C}^{+},\) \(||z||\) bounded, and \(Im(z) \geq Cn^{-1}.\) A more recent work \cite{bourgadeyau} by Bourgade and Yau considered the stochastic differential equations describing the evolution of the eigenvalues and eigenvectors of perturbations of \(A\) by a multi-dimensional Brownian motion, which the authors call the Dyson Brownian motion and vector flow, respectively. 
\par
One striking application of the method in \cite{bourgadeyau} is connected to quantum unique ergodicity, a stronger version of unique ergodicity, conjectured by Rudnick and Sarnak~\cite{rudnicksarnak} for arithmetic hyperbolic manifolds: the measures \(\mu_j\) induced by the eigenstates \(\psi_j\) of the Laplace-Beltrami operator of such negatively curved and compact manifolds converge weakly to the volume measure as \(j \to \infty.\) In the context of random matrices, this amounts to a strong notion of eigenvector delocalization (see also the introduction of \cite{yau2}). A local version of it is shown in \cite{bourgadeyau}: there exist \(\epsilon_0, \delta_0>0\) such that for \(\delta>0, \alpha_1, \alpha_2, \hspace{0.05cm} ... \hspace{0.05cm}, \alpha_n \in [-1,1], \alpha_1+\alpha_2+...+\alpha_n=0,\) 
\begin{equation}\label{bouryau}
    \mathbb{P}(\frac{n}{|\{i: 1 \leq i \leq n, \alpha_i \ne 0\}|} \cdot |\sum_{1 \leq i \leq n}{\alpha_iu^2_{ik}}|>\delta) \leq C \cdot (n^{-\epsilon}+|\{i: 1 \leq i \leq n, \alpha_i \ne 0\}|^{-1})
\end{equation}
for \(k \in T_N=([1,n^{1/4}] \cup [n^{1-\delta_0},n-n^{1-\delta_0}] \cup [n-n^{1/4},n])\cap \mathbb{N},\) and \(C>0\) universal (corollary \(1.4\)). Their method has been not only latterly used for other models (e.g., \cite{benigni}, \cite{benigni2}), but also improved by Cipolloni, Erdös, and Schröder~\cite{cipolini} to justify a strong form of quantum unique ergodicity: for any \(\epsilon,C>0,\) and \(M \in \mathbb{R}^{n \times n}, ||M|| \leq C,\)
\begin{equation}\label{cipo}
    \mathbb{P}(\max_{1 \leq i,j \leq n}{|u^T_iMu_j-\frac{1}{n}tr(M) \cdot \chi_{i=j} |} \leq n^{\epsilon-1/2})=1-o(1).
\end{equation}
It must be mentioned the assumptions on the entries of \(A\) for both (\ref{bouryau}) and (\ref{cipo}) are not stringent (finite moments of all orders), with the model in \cite{bourgadeyau} even allowing for some variability in the variances of the entries (the so-called generalized Wigner matrices).
\par
Consider now Wishart matrices, a family of ensembles closely connected to Wigner models: \(A=\frac{1}{N} XX^T,\) with \(X=(x_{ij})_{1 \leq i \leq n, 1 \leq j \leq N} \in \mathbb{R}^{n \times N}\) containing i.i.d. entries. Oftentimes results for the latter have been extended to the former in the random matrix theory regime (i.e., \(\lim_{n \to \infty}{\frac{n}{N}}=\gamma\)) by a symmetrization trick: employ \(H=\begin{pmatrix}
0 & X  \\
X^T & 0  \\
\end{pmatrix}\) instead of \( A.\) However, eigenvalue universality for alike Wishart matrices with \(\gamma \ne 1\) has been justified exclusively through local laws, and a primary reason underlying this discrepancy between these two families is related to combinatorics. The key in the method of moments, when applied to a random matrix \(A \in \mathbb{R}^{n \times n},\) is computing 
\[\mathbb{E}[tr(A^p)]\]
for \(p \in \mathbb{N}\) growing with \(n.\) On the one hand, 
the traces of Wigner matrices can be understood using the structure of cycles of length \(p:\)
\begin{equation}\label{trace}
    \mathbb{E}[tr(A^p)]=\sum_{(i_0,i_1, \hspace{0.05cm}... \hspace{0.05cm},i_{p-1})}{\mathbb{E}[a_{i_0i_1}a_{i_1i_2}...a_{i_{p-1}i_0}]},
\end{equation}
and due to the i.i.d. behavior, the expectations of such products are easy to compute as they factor into moments of \(a_{11}.\) On the other hand, when \(A=\frac{1}{N} XX^T\) for \(X \in \mathbb{R}^{n \times N}\) with i.i.d. entries, the contributions of individual cycles in (\ref{trace}) are more cumbersome. Opening the parentheses in
\begin{equation}\label{trcov}
    \mathbb{E}[tr(A^p)]=\sum_{(i_0,i_1, \hspace{0.05cm}... \hspace{0.05cm},i_{p-1})}{\mathbb{E}[(\sum_{1 \leq k \leq N}{x_{i_0k}x_{i_1k}})(\sum_{1 \leq k \leq N}{x_{i_1k}x_{i_2k}})...(\sum_{1 \leq k \leq N}{x_{i_{p-1}k}x_{i_0k}})]},
\end{equation}
produces \(N^p\) products for each cycle of length \(p,\) 
and additionally, there is an implicit asymmetry created by \(n,N,\) the dimensions of \(X.\) 
\par
Traces as in (\ref{trcov}) have been handled in two contexts, almost sure limits of the edge eigenspectrum of \(A\) and edge universality. Assuming \(\lim_{n \to \infty}{\frac{n}{N}}=\gamma \in (0,\infty),\) Yin et al.~\cite{baiyinkrish} showed \(\lambda_1(A) \xrightarrow[]{a.s.} (1+\sqrt{\gamma})^2,\) for which the following was essential: when \(z>(1+\sqrt{\gamma})^2,\)
\begin{equation}\label{trineq}
    \sum_{n \geq 1}{z^{-k_n}\mathbb{E}[tr(A_n^{k_n})]}<\infty
\end{equation}
for \(k_n \in \mathbb{N}\) growing slightly faster than \(\log{n},\) and \(A_n:=\frac{1}{N}XX^T.\) Subsequently, Bai and Yin~\cite{baiyin} proved \(\lambda_1(A) \xrightarrow[]{a.s.} (1-\sqrt{\gamma})^2\) when \(\gamma \in (0,1)\) (for \(\gamma>1,\) the same result holds but for \(\lambda_{n-N+1}(A):\) this is due to the duality between \(XX^T\) and \(X^TX\)) by justifying the analog of (\ref{trineq}) for a family of matrices \(T(l),\) whose linear combinations yield, roughly speaking, the powers of \(A-(1+\gamma)I.\) 
\par
The authors of the aforementioned works considered the directed graphs underlying (\ref{trcov}), which could be visualized as two (parallel) lines \(L_1,L_2,\) each containing \(p\) points, with \(2p\) segments between them: Soshnikov~\cite{sosh} led another analysis of such traces. He found their asymptotic behavior for \(N=n+o(n^{1/3})\) and \(p=o(n^{2/3}),p=\lfloor tn^{2/3} \rfloor\) by an entirely different approach than the one just described, based on a comparison between traces of powers of \(A\) and those of a  Wigner matrix (the latter ranges of \(p\) demonstrate universality at the right edge of \(A\)). At a high level, \(N\) and \(n\) must be roughly equal in \cite{sosh} for the transition to a Wigner matrix to be tight: in such ensembles, a key feature of the cycles underlying (\ref{trace}) is all the vertices belonging to the same set, \(\{1,2, \hspace{0.05cm} ... \hspace{0.05cm},n\},\) whereas in the graphs underlying (\ref{trcov}), there are two types, elements of \(\{1,2, \hspace{0.05cm} ... \hspace{0.05cm},n\}\) and \(\{1,2, \hspace{0.05cm} ... \hspace{0.05cm},N\},\) respectively. This suggests a close connection between these families of random matrices when the two sets are roughly equal, whereas it is not a priori clear what occurs when this condition is violated. 
\par
This paper relies on the technique introduced by Sinai and Soshnikov in \cite{sinaisosh}, used for universality results (\cite{sosh}, \cite{feralpeche}) as well as upper bounds on eigenvalues (\cite{auffinger}, \cite{benaychpeche}), and a simple observation from \cite{oldpaper} allowing to compute the leading terms in analogs of (\ref{trcov}), originally employed for deriving the asymptotic law of the largest eigenvalue of Wigner matrices with entries at the boundary between light- and heavy-tailed regimes. Concretely, this work contains three types of results: CLTs in the spirit of (\ref{cltsinaisosh}), asymptotic Haar behavior of the matrix consisting of eigenvectors of a Wigner ensemble, and trace expectations for Wishart matrices, which translate into CLTs, revealing a novel connection between Wishart and Wigner ensembles and providing a means to extend the results herein to the former (despite this not being pursued here). Loosely speaking, the eigenvector behavior comes forth in this case through traces: it is yet another product of the methodology in \cite{sinaisosh}, originally devised for the latter and up to this point exploited primarily for eigenspectra. 
\vspace{0.4cm}
\par
\textbf{Acknowledgments:} The bulk of this work has been completed in the author's graduate studies at Stanford University: this version corrects mistakes the original version had in the proofs of the auxiliary lemmas with the primary results remaining almost unchanged. She is particularly grateful to George Papanicolaou and Lenya Ryzhik for their feedback on earlier versions of this paper.

\section{Main Results}

The first step towards comprehending eigenvectors of Wigner matrices is comprised of the following convergences, closely connected to (\ref{cltsinaisosh}).

\begin{theorem}\label{th4}
For \(\epsilon_0>0\) fixed, let \(A=\frac{1}{\sqrt{n}}(a_{ij})_{1 \leq i,j \leq n} \in \mathbb{R}^{n \times n}, A=A^T,\) \((a_{ij})_{1 \leq i \leq j \leq n}\) i.i.d. with 
\begin{equation}\label{condd0}\tag{W1}
    a_{11} \overset{d}{=} -a_{11}, \hspace{0.2cm} \mathbb{E}[a^2_{11}]=1, \hspace{0.2cm}, \mathbb{E}[|a_{11}|^{8+\epsilon_0}] \leq C(\epsilon_0),
\end{equation}
and \(e_1,e_2, \hspace{0.05cm} ... \hspace{0.05cm},e_n\) the standard basis in \(\mathbb{R}^n.\) Then for \(p \in \mathbb{N}, i,j \in \{1, \hspace{0.05cm} ... \hspace{0.05cm},n\}, i \ne j,\) \(p=n^{o(1)},\) as \(n \to \infty,\) 
\[(a) \hspace{0.2cm} \sqrt{\frac{n}{c(p,\mathbb{E}[a^4_{11}])}}\cdot (e_i^TA^{2p}e_i-\frac{1}{n}\mathbb{E}[tr(A^{2p})]) \Rightarrow N(0,1),\]
\[(b) \hspace{0.2cm} \sqrt{\frac{n}{C_{p}-C^2_{p/2}\chi_{2|p}}}\cdot e_i^TA^{p}e_j \Rightarrow N(0,1),\]
where \(c:\mathbb{N}\times [1,\infty) \to [1,\infty),\)
\[c(p,1+\delta)=\mathcal{A}_1(p)+\mathcal{A}_2(p) \cdot (\delta-1)\]
where \(\mathcal{A}_1(p),\mathcal{A}_2(p)\) are given by (\ref{ref1}). Furthermore, 
\begin{equation}\label{limm}
    \lim_{p \to \infty}{\frac{c(p,1+\delta)}{C_{p-1}}}=\frac{\sigma(\sigma+6)}{2},
\end{equation}
where \(\sigma>0\) is defined by (\ref{sigmadef}).
\end{theorem}

Some remarks on the statement of Theorem~\ref{th4} are in order. The condition \(p=n^{o(1)}\) is likely suboptimal, and given \cite{sosh}, \(p=o(n^{2/3})\) is necessary: the proof shows the diagonal entries of \(A^{2p+1}\) have heavier tails than Gaussian, their \((2l)^{th}\) moment being a linear combination of \((\mathbb{E}[a^{2k}_{11}])_{0 \leq k \leq l},\) with weights decaying polynomially in \(p.\) This implies that when \(p\) is bounded, the behavior is not universal (it depends on all moments of \(a_{11}:\) this is illustrated also by the simple case \(p=1\)) and is asymptotically Gaussian when the moments \(a_{11}\) do not grow too fast relatively to it,
\begin{equation}\label{pcondie}
    \lim_{n \to \infty}{(\log{p}-\frac{2\log{\mathbb{E}[a^{2l}_{11}\chi_{|a_{11}| \leq n^{\delta}}]}}{3l})}=\infty,
\end{equation}
for some \(\delta \in (\frac{2}{8+\epsilon_0},\frac{1}{4})\) and 
all \(l \in \mathbb{N}\): e.g., \(p \to \infty\) and \(\mathbb{E}[a^{2l}_{11}] \leq C(l)\) suffice (see end of subsection~\ref{treigen22}). The moment condition in (\ref{condd0}) is a weakening of subgaussianity, the assumption behind (\ref{cltsinaisosh}), and comes at the cost of a lower range for \(p\) due to the trade-off between the range of \(p\) and tails of \(a_{11}:\) the larger the former, the more first-order terms in the quantities of interest (see discussion at the beginning of section~\ref{treigen}). The function \(c\) appears when computing the variance of \(e_i^TA^{p}e_i-\frac{1}{n}\mathbb{E}[tr(A^{p})]\) (once it is expressed as a sum over cycles of length \(p\)), whose leading components are identified with a simple property underlying the Catalan numbers ((\ref{easycatalan}) and the description preceding it): an interesting feature of this result is the fourth moment appearing in the normalization. This statistic is known to separate the heavy- and light-tailed regimes: when the law of \(a_{11}\) is regularly varying with index \(\alpha \in (0,4)\) (in particular, \(\mathbb{E}[|a_{11}|^{\alpha+\epsilon}]=\infty\) for all \(\epsilon>0\)), its (right) edge eigenvalues, properly normalized (roughly by \(n^{2/\alpha},\) not \(\sqrt{n}\)), follow a Poisson point process and their corresponding eigenvectors are completely delocalized (equidistributed on two entries: see \cite{sosh2}, \cite{auffinger}), standing in stark contrast with the case \(\mathbb{E}[a^4_{11}]<\infty.\)
\par
A consequence of \(a_{11} \overset{d}{=} -a_{11}\) is \(\mathbb
{E}[(\Tilde{A}_p)_{ij} \cdot (\Tilde{A}_p)_{i'j'}]=0\) when \(i<j,i'<j', \{i,j\} \ne \{i',j'\},\) where 
\[\Tilde{A}_p=\sqrt{\frac{n}{C_p-C^2_{p/2}\chi_{2|p}}}(A^p-\frac{1}{n}\mathbb{E}[tr(A^{p})]I),\] 
thereby suggesting
\begin{equation}\label{heuristic}
    \Tilde{A}_p \approx A^{G}
\end{equation}
when \(2|p\) for \(A^{G}=(a_{ij})_{1 \leq i,j \leq n}\) a symmetric matrix with \((a_{ij})_{1 \leq i \leq j \leq n}\) independent,  
normally distributed, 
\[
\mathbb{E}[a_{ij}]=0, \hspace{0.4cm} \mathbb{E}[a_{ij}^{2}]=\begin{cases}
    \frac{\sigma(\sigma+6)}{8}=\lim_{p \to \infty}{\frac{c(p,\mathbb{E}[a^4_{11}])}{C_p}},\hspace{0.95cm} i=j,\\
    1, \hspace{4.9cm} i \neq j.
\end{cases}
\]
To make (\ref{heuristic}) rigorous, a pseudo-distance between the metrics induced by \(\Tilde{A}_p\) and \(A^{(G)}\) 
is considered as the dimensions of these matrices increase with \(n.\) 
\par
Let \(m=\frac{n(n+1)}{2}, \mathbb{R}^{m}=\{(a_{ij})_{1 \leq i \leq j \leq n},a_{ij} \in \mathbb{R}\}:\) denote by \(\mu_{n,p}\) the joint distribution of \(((\Tilde{A}_p)_{ij})_{1 \leq i \leq j \leq n},\) and \(\nu_{n,p}\) the law of a multivariate Gaussian distribution of dimension \(m,\) whose entries \((x_{ij})_{1 \leq i \leq j \leq n}\) are independent, centered and
\begin{equation}\label{dij}
    \mathbb{E}[x^2_{ij}]=d_{ij}=\begin{cases}
    \frac{c(p,\mathbb{E}[a^4_{11}])}{C_{p}},\hspace{1cm} i=j,\\
    1, \hspace{2.2cm} i \neq j.
    \end{cases}
\end{equation}
By convention,
\begin{equation}\label{munp}
   \mathbb{E}_{\mu_{n,p}}[f(X)]=\int{fd\mu_{n,p}}=\mathbb{E}[f(X)], \hspace{0.8cm} X \overset{d}{=}((\Tilde{A}_p)_{ij})_{1 \leq i \leq j \leq n}, 
\end{equation}
\begin{equation}\label{nunp}
    \hspace{1.1cm} \mathbb{E}_{\nu_{n,p}}[f(X)]=\int{fd\nu_{n,p}}=\mathbb{E}[f(X)], \hspace{0.8cm} X \overset{d}{=} N(0,diag(d_{ij})_{1 \leq i \leq j \leq n}),
\end{equation}
for Borel measurable functions \(f:\mathbb{R}^m \to \mathbb{R}_{\geq 0}.\) 
Additionally, take 
\[\mathbb{E}_{\mu_{n}}[f(X)]=\int{fd\mu_{n}}=\mathbb{E}[f(X)], \hspace{0.8cm} X \overset{d}{=}(A_{ij})_{1 \leq i \leq j \leq n},\]
\[\hspace{1.4cm} \mathbb{E}_{\nu_{n}}[f(X)]=\int{fd\nu_{n}}=\mathbb{E}[f(X)], \hspace{0.8cm} X \overset{d}{=} N(0,diag(l_{ij})_{1 \leq i \leq j \leq n}),\]
with
\[l_{ij}=\begin{cases}
    \frac{\sigma(\sigma+6)}{8},  \hspace{1cm} i=j,\\
    1, \hspace{1.9cm} i \ne j,
    \end{cases}\]
(\(\mu_n,\nu_n\) are, up to an extent, limits of \(\mu_{n,p},\nu_{n,p},\) respectively, in light of (\ref{limm})).
\par
Recall the distance giving weak convergence: 
\[d(\eta_1,\eta_2)=\sup_{||f||_{\infty}=1, f \in C(\mathbb{R}^{m})}{|\int{fd\eta_1}-\int{fd\eta_2}|},\]
where \(||f||_{\infty}=\sup_{x \in \mathbb{R}^m}{|f(x)|},\) and \(C(\mathbb{R}^{m})\) is the set of continuous functions \(f:\mathbb{R}^m \to \mathbb{R}.\) A rigorous interpretation \(\mu_{n,p} \approx \nu_{n,p}\) would be
\[d(\mu_{n,p},\nu_{n,p})\xrightarrow[]{n \to \infty} 0.\]
The following result is a weaker version of this ideal convergence.

\begin{theorem}\label{th55}
    Under the assumptions of Theorem~\ref{th4}, there exist 
    \(c_0(\epsilon_0),c_1,c_2 \in (0,1)\) such that for \(p \in \mathbb{N}, \newline p \leq c_0(\epsilon_0) (\log{n})^{1/45},\) 
    \begin{equation}\label{weakconv}
        d_{n,p}(\mu_{n,2p},\nu_{n,2p}):=\sup_{f \in \mathcal{R}_{n,p}}{|\int{fd\mu_{n,2p}}-\int{fd\nu_{n,2p}}|} \leq c^m_1+2C(\epsilon_0)n^{-\epsilon_0/8},
    \end{equation}
    where \(B_m(0,R)=\{y \in \mathbb{R}^m, ||y|| \leq R\}\) and
    \[\mathcal{R}_{n,p}=\{f \in C(\mathbb{R}^m), \hspace{0.1cm} supp(f) \subset B_m(0,c_2\sqrt{mp}), \hspace{0.1cm} ||f||_{\infty} \leq 1\}.\] 
\end{theorem}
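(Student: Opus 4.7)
The plan is to split the comparison into a Wigner-entry truncation that produces the $n^{-\epsilon_0/8}$ term and a joint-moment/density comparison on the truncated event that produces the $c_1^m$ term. Fix a truncation event $T=\{|a_{ij}|\leq n^{1/4+\eta(\epsilon_0)},\,1\leq i\leq j\leq n\}$ with $\eta$ a small function of $\epsilon_0$; Markov's inequality applied to $\mathbb{E}[a_{11}^{8+\epsilon_0}]\leq C(\epsilon_0)$ together with a union bound over the $m=n(n+1)/2$ independent entries yields $\mu_{n,2p}(T^c)\leq C(\epsilon_0)n^{-\epsilon_0/8}$. Because $\|f\|_\infty\leq 1$, the contribution of the $T^c$ part of $\int f\,d\mu_{n,2p}$ is absorbed into the second term of (\ref{weakconv}), and one pays nothing on the $\nu_{n,2p}$-side for the same truncation since $T$ is intrinsic to $\mu_{n,2p}$.

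On the good event $T$ the comparison is carried out by matching joint moments of the entries of $\tilde{A}_{2p}$ against those of the Gaussian vector distributed as $\nu_{n,2p}$. By (\ref{trace}), each $(\tilde{A}_{2p})_{ij}$ is, after centering and scaling, a sum over walks of length $2p$ in $[n]$ from $i$ to $j$, so a joint moment $\mathbb{E}[\prod_{k=1}^q(\tilde{A}_{2p})_{i_k j_k}]$ is a sum over $q$-tuples of such walks. I would extend the cycle-counting scheme underlying Theorem~\ref{th4}, together with the simple Catalan identity the paper emphasizes, to this multi-walk setting: the leading contribution comes precisely from $q$-tuples whose union multigraph is a disjoint union of paired retracing walks — these are exactly the Wick pairings for the Gaussian target — while each ``excess'' (non-paired) edge costs a factor $n^{-1}$, and the truncation $|a_{ij}|\leq n^{1/4+\eta}$ controls the higher moments that arise from coincident edges. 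Summed over the admissible diagrams, the joint-moment discrepancy is exponentially small in $m$. To pass from joint-moment agreement to $\sup_{f\in\mathcal{R}_{n,p}}|\int f\,d\mu_{n,2p}-\int f\,d\nu_{n,2p}|$, I would use that $\mathrm{supp}(f)\subset B_m(0,c_2\sqrt{mp})$ and $|f|\leq 1$ to obtain a uniform polynomial approximation of $f$ on its support (or, equivalently, a compactly supported Fourier-inversion representation), whose degree is calibrated so that the remaining error bound is exactly $c_1^m$.

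The main obstacle is the joint-moment extension: Theorem~\ref{th4} handles one entry at a time, but here the combinatorics must be carried out uniformly in $q$ growing essentially like $m$, and the admissible configurations of $q$ simultaneous walks of length $2p$ sharing edges form a substantially richer family than the cycles in the one-entry case. Extracting the Wick-pairing term as the leading contribution requires showing the interaction structure decouples into ``Catalan-like'' pieces, a step where the simple identity the paper advertises is exactly what makes the induction on $q$ go through. The bound $p\leq c_0(\log n/\epsilon_0)^{1/7}$ is dictated precisely by this: it is the largest regime in which the cumulative sub-leading cycle error, accumulated across $\Theta(m)$ joint moments and with the truncated entries contributing polylogarithmic factors, still falls below $c_1^m$.
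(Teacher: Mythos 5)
Your plan has the right skeleton --- truncate, compare moments (or characteristic functions), then pass to test functions via Fourier inversion --- and in that sense mirrors the paper's strategy. But three points need correction.

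First, the truncation threshold. You write $T=\{|a_{ij}|\leq n^{1/4+\eta}\}$ with $\eta$ small. But the combinatorial moment estimates (the analogue of (\ref{subas}) and Lemmas~\ref{stilldom}--\ref{bigcycles1}) only carry through when the cutoff is $n^\delta$ with $\delta<1/4$, since the gain per excess edge is of size $n^{-2\delta_1}$ with $\delta_1=\tfrac14-\delta>0$. The window for $\delta$ is $\bigl(\tfrac{2}{8+\epsilon_0},\tfrac14\bigr)$: the lower bound makes the union bound (\ref{approxx}) give $n^{-\epsilon_0/8}$, and the upper bound keeps $\delta_1>0$. A threshold above $n^{1/4}$ kills the moment analysis on the good event.

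Second, and more substantively, the passage from moments to $\sup_{f\in\mathcal{R}_{n,p}}$. You propose ``uniform polynomial approximation of $f$ on its support,'' remarking it is equivalent to a Fourier-inversion representation. It isn't, and the polynomial route on its own cannot yield $c_1^m$: $f$ is merely continuous with no quantitative modulus, so a Weierstrass-type approximation gives no control on degree, and high-degree polynomials would require joint moments whose discrepancy grows without bound. The paper's mechanism (Lemma~\ref{ll2} and Theorem~\ref{th5}) is to first mollify $f\mapsto f*\phi_\epsilon$ with a carefully designed bump (Lemma~\ref{lem4}), then apply Fourier inversion, and to split the frequency integral: for $\|\theta\|\leq 2\pi$ one uses the characteristic-function bound from Lemma~\ref{ll2}, and for $\|\theta\|>2\pi$ one uses Plancherel together with derivative bounds on $f*\phi_\epsilon$. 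The mollification step is what makes the whole scheme run, and it is not interchangeable with polynomial approximation.

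Third, the provenance of $c_1^m$. You attribute it to ``the joint-moment discrepancy is exponentially small in $m$.'' That is not where the geometric term comes from. The moment (or characteristic-function) discrepancy is only \emph{polynomially} small in $n$: Lemma~\ref{ll2} gives $|\mathbb{E}_\mu[e^{i\theta\cdot X}]-\mathbb{E}_\nu[e^{i\theta\cdot X}]|\lesssim\exp(c(p^7(\epsilon_0+1)-\log n))+\exp(-cp)$, and the underlying moment estimates contribute $n^{-2\delta_2}$ per excess-edge gain. The $c_1^m$ arises in Theorem~\ref{th5} from the competition between the Gaussian decay $\exp(-cp\|\theta\|^2)$ of the characteristic-function bound, integrated over $\theta$ (which produces $(cmp)^{-m/2}$ after accounting for $V(B_m(1))$), and the $L^1$ size $R^m V(B_m(1))\|f\|_\infty$ of $\hat{f}_\epsilon$; with $R=c_2\sqrt{mp}$ the ratio collapses to $(c_2^2/c_8)^{m/2}$. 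The constraint $p\lesssim(\log n/\epsilon_0)^{1/7}$ then comes from balancing the Taylor-truncation parameter and the various tail errors in Lemma~\ref{ll2}, not from accumulating $\Theta(m)$ joint-moment errors.
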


Since the compactly supported elements of \(C(\mathbb{R}^m),\) 
a metric space equipped with the \(L^{\infty}\) norm, are dense in it,  Theorem~\ref{th55} restricts the sizes of the domains of the functions underlying the pseudo-distance \(d_{n,p},\) from arbitrary compact sets to balls centered at the origin of radius at most \(c_2\sqrt{mp}.\) Despite this condition being likely suboptimal, it already renders nontrivial results: by the law of large numbers, \(\nu_{n,p}\) concentrates around the boundary of the ball centered at the origin of radius \(\Theta(\sqrt{m}),\) suggesting most functions of interest in \(C(\mathbb{R}^n)\) with compact support are captured by Theorem~\ref{th55}. A consequence of this result is stated next.

\begin{corollary}\label{cor1}
Under the assumptions in Theorem~\ref{th4}, there exists \(c_3>0\) such that for any \((f_l)_{l \in \mathbb{N}} \subset C(\mathbb{R}^m)\) with \(\lim_{l \to \infty}{||f-f_l||_{\infty}}=0,\) \(f_l \geq 0\) for \(l \in \mathbb{N},\) and \(p \geq \frac{16}{c^2_2},\)
\[|\int{fd\mu_{n,2p}}-\int{fd\nu_{n,2p}}| \leq ||f||_{\infty}\cdot [6c^m_1+4C(\epsilon_0) n^{-\epsilon_0/8}+\frac{c_3}{mp^2}].\]
\end{corollary}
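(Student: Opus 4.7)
The plan is to reduce Corollary~\ref{cor1} to Theorem~\ref{th55} by a double cutoff argument: one bump localizes $f$ to a ball on which Theorem~\ref{th55} applies directly, and a second converts the $\mu_{n,2p}$-tail outside that ball into a (much easier) $\nu_{n,2p}$-tail, again via Theorem~\ref{th55}. The only estimate outside Theorem~\ref{th55} will be Gaussian concentration of $||X||^2$ under $\nu_{n,2p}$.

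Set $R:=c_2\sqrt{mp}$ and fix continuous $\phi,\tilde\phi:\mathbb{R}^m\to[0,1]$ with $\phi\equiv 1$ on $B_m(0,R/2)$, $supp(\phi)\subset B_m(0,R)$, $\tilde\phi\equiv 1$ on $B_m(0,R/4)$, and $supp(\tilde\phi)\subset B_m(0,R/2)$; both lie in $\mathcal{R}_{n,p}$. For $\eta\in\{\mu_{n,2p},\nu_{n,2p}\}$, split $\int f\,d\eta=\int f\phi\,d\eta+\int f(1-\phi)\,d\eta$; since $0\le f\le 1$ and $1-\phi$ is supported in $B_m(0,R/2)^c$, the second term is at most $\eta(B_m(0,R/2)^c)$. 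For the first, note that for large $l$, $||f_l\phi||_\infty\le 1+||f-f_l||_\infty=:1+\epsilon_l$, so $(f_l\phi)/(1+\epsilon_l)\in\mathcal{R}_{n,p}$, and Theorem~\ref{th55} gives $|\int f_l\phi\,d\mu_{n,2p}-\int f_l\phi\,d\nu_{n,2p}|\le(1+\epsilon_l)(c_1^m+2C(\epsilon_0)n^{-\epsilon_0/8})$; letting $l\to\infty$ passes the bound to $f\phi$ via uniform convergence and finiteness of the measures.

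To bound the $\mu_{n,2p}$-tail, apply Theorem~\ref{th55} to $\tilde\phi\in\mathcal{R}_{n,p}$: from $\tilde\phi\le\chi_{B_m(0,R/2)}$ and $1-\tilde\phi\le\chi_{B_m(0,R/4)^c}$,
\[
\mu_{n,2p}(B_m(0,R/2)^c)\le 1-\int\tilde\phi\,d\mu_{n,2p}\le \nu_{n,2p}(B_m(0,R/4)^c)+c_1^m+2C(\epsilon_0)n^{-\epsilon_0/8}.
\]
Under $\nu_{n,2p}$, $||X||^2=\sum_{i\le j}x_{ij}^2$ is a sum of $m$ independent squared centered Gaussians whose variances are uniformly $O(1)$ (combining the explicit bounds on $c(p,\cdot)$ in Theorem~\ref{th4} with the asymptotic $C_{2p-1}/C_{2p}\to 1/4$), so $\mathbb{E}_{\nu_{n,2p}}||X||^2=(1+o(1))m$. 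The hypothesis $p\ge 16/c_2^2$ places the thresholds $c_2^2mp/4$ and $c_2^2mp/16$ at a universal multiple of the mean, so sub-exponential concentration yields $\nu_{n,2p}(B_m(0,R/4)^c)+\nu_{n,2p}(B_m(0,R/2)^c)\le c_3/(2mp^2)$ for a universal $c_3$. Summing the main term, the transferred $\mu$-tail, and the two $\nu$-tails gives the claimed $2c_1^m+4C(\epsilon_0)n^{-\epsilon_0/8}+c_3/(mp^2)$.

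The factor $2$ in $2c_1^m$ and $4C(\epsilon_0)n^{-\epsilon_0/8}$ records exactly the two invocations of Theorem~\ref{th55}: once on $f\phi$ for the main term, once on $\tilde\phi$ to swap the $\mu$-tail for a $\nu$-tail. The main conceptual point---and what this argument sidesteps---is that the $\mu_{n,2p}$-tail of $||X||^2$ need \emph{not} be bounded by a direct fourth-moment estimate on the entries of $\tilde A_{2p}$ (which would be delicate given the correlation structure induced by powers of $A$): Theorem~\ref{th55} itself handles the comparison to the Gaussian law, leaving only standard Gaussian concentration to verify.
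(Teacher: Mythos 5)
Your overall skeleton (cutoff $\phi$, Theorem~\ref{th55} for the localized part, tail estimates for the rest, uniform limit in $l$) is the same as the paper's. Where you genuinely deviate is the $\mu_{n,2p}$-tail: the paper bounds $\mathbb{P}_{\mu_{n,2p}}(\|X\|>c_2\sqrt{mp}/2)$ directly by Chebyshev, using $\mathbb{E}_{\mu_{n,2p}}[\|X\|^2]=m(1+o(1))$ and $\mathrm{Var}_{\mu_{n,2p}}(\|X\|^2)=3m(1+o(1))$, which it extracts from the moment computations in the proof of Lemma~\ref{ll2}; you instead spend a second application of Theorem~\ref{th55} on a second bump $\tilde\phi$ to convert the $\mu$-tail into a $\nu$-tail. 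That is a legitimate and arguably self-contained alternative (it needs nothing about $\mu$ beyond Theorem~\ref{th55}), and it also explains the factors $2c_1^m+4C(\epsilon_0)n^{-\epsilon_0/8}$ differently: in the paper they come from normalizing $\phi f_l/2\in\mathcal{R}_{n,p}$ (since $\|f_l\|_\infty\le 2$ eventually), in your argument from invoking Theorem~\ref{th55} twice.

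There is, however, a quantitative slip in your tail-transfer step. With $\tilde\phi\equiv 1$ only on $B_m(0,R/4)$, the transferred tail is $\nu_{n,2p}(\|X\|^2>c_2^2mp/16)$, and the hypothesis $p\ge 16/c_2^2$ only guarantees $c_2^2mp/16\ge m$, i.e.\ a threshold at (not strictly above) the mean: under $\nu_{n,2p}$ one has $\mathbb{E}\|X\|^2=(m-n)+n\,c^2(2p,\mathbb{E}[a_{11}^4])/C_{2p}\approx m-n/2$ and $\mathrm{sd}(\|X\|^2)\approx\sqrt{2m}\approx n$. So for $p$ at or near $16/c_2^2$ the threshold exceeds the mean by only a fraction of one standard deviation, and $\nu_{n,2p}(B_m(0,R/4)^c)$ is a constant of order $\mathbb{P}(Z>1/2)$, not $O(1/(mp^2))$; your claim that ``sub-exponential concentration yields $\le c_3/(2mp^2)$'' fails there, and with it the corollary's bound on the low end of the admissible range of $p$. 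The fix is immediate: take the inner radius of $\tilde\phi$ to be any fixed fraction of $R$ strictly larger than $1/4$ (e.g.\ $0.49R$, keeping $\mathrm{supp}(\tilde\phi)\subset B_m(0,R/2)$), so that the threshold is at least a constant factor $>1$ times the mean for all $p\ge 16/c_2^2$; then even Chebyshev gives the $c_3/(mp^2)$ bound, exactly as in the paper's calibration of the $R/2$ threshold ($c_2^2p/4\ge 4$, i.e.\ threshold at least four times the mean). With that adjustment your argument is complete.
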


Because for any \(\lambda \in \mathbb{R},\)  \(A\) and \(A^p-\lambda I\) share eigenvectors, a notion of asymptotically Haar distributed for them ensues from the two results above.

\begin{theorem}\label{th7}
   Let \(\mathcal{R}_n\) be the set of functions \(f:Sym(n):=\{M \in \mathbb{R}^{n \times n}, M=M^T\} \to \mathbb{R}\) with \(||f||_{\infty} \leq 1,\) the restriction 
   \[f|_{Sym^d(n):=\{M \in \mathbb{R}^{n \times n}, M=M^T, \lambda_1(M)>...>\lambda_n(M)\}}\] 
   continuous, given by \(f(M)=h(u_1, u_2, \hspace{0.05cm} ... \hspace{0.05cm},u_n),\) for \(u_1,u_2, \hspace{0.05cm} ... \hspace{0.05cm}, u_n\) unit eigenvectors of \(M,\) \(h:(\mathbb{S}^{n-1})^{n} \to \mathbb{R}\) even in each of its components, and Borel measurable. Under the assumptions in Theorem~\ref{th4}, \(A \in Sym^d(n),\) and
    \[\sup_{f \in \mathcal{R}_n}{|\int{fd\mu_{n}}-\int{fd\nu_{n}}|}\to 0\]
    with high probability as \(n \to \infty.\) 
\end{theorem}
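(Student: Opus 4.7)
The plan is to reduce Theorem~\ref{th7} to Corollary~\ref{cor1} by exploiting that $A$ and $\Tilde{A}_{2p}$ share their unit eigenvectors (as an unordered set, up to sign). Start by showing $A\in Sym^d(n)$ with probability $1-o(1)$ — via absolute continuity of $a_{11}$ when available or, under (\ref{condd0}) in general, via a Wegner-type eigenvalue-gap estimate — while $A^G\sim\nu_n$ is in $Sym^d(n)$ almost surely by Gaussianity. On this high-probability event the ordered unit eigenvectors $u_1(A),\ldots,u_n(A)$ are well defined up to sign, and both $\int f\,d\mu_n$ and $\int f\,d\nu_n$ reduce (up to $o(1)$) to integrals restricted to $Sym^d(n)$.

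Next, make the eigenvector identification explicit. Since $\Tilde{A}_{2p}$ is the image of $A$ under an even polynomial applied to the spectrum (plus a scalar shift), its unit eigenvectors coincide with those of $A$ but are re-indexed by the permutation $\sigma(A)$ sorting $(|\lambda_i(A)|)_{i=1}^n$ in decreasing order. Define $F:\mathbb{R}^m\to\mathbb{R}$ by $F(X)=h(v_1(X),\ldots,v_n(X))$, with $v_i(X)$ the $i$-th unit eigenvector of the symmetric matrix encoded by $X$ (under decreasing-eigenvalue ordering, with any Borel choice on the null set of non-simple spectra). The evenness of $h$ in each argument absorbs the sign ambiguity in each $v_i$, so on $\{A\in Sym^d(n)\}$ one has $F(\Tilde{A}_{2p})=h(u_{\sigma(A)(1)}(A),\ldots,u_{\sigma(A)(n)}(A))$, with an analogous identity on the Gaussian side.

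To deal with the random permutation $\sigma(A)$, exploit the symmetries $A\overset{d}{=}-A$ (from $a_{11}\overset{d}{=}-a_{11}$) and $A\overset{d}{=}PAP^T$ for every permutation matrix $P$ (from the i.i.d.\ structure), together with their counterparts for $\nu_n$. Replacing $f$ by its orbit-average $f^{\mathrm{sym}}$ under the combined sign/permutation action leaves both $\int f\,d\mu_n$ and $\int f\,d\nu_n$ unchanged, while $F(\Tilde{A}_{2p})$ computes precisely $f^{\mathrm{sym}}(A)$ on the simple-spectrum event, yielding
\[
\int F\,d\mu_{n,2p}=\int f\,d\mu_n+o(1),\qquad \int F\,d\nu_{n,2p}=\int f\,d\nu_n+o(1),
\]
uniformly over $f\in\mathcal{R}_n$. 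Then apply Corollary~\ref{cor1} after truncating $F$ outside $B_m(0,c_2\sqrt{mp})$ (the tail contribution being $o(1)$ by Gaussian concentration of $\nu_{n,2p}$ and the variance estimate on $\Tilde{A}_{2p}$) and approximating $(F+1)/2\geq 0$ on this ball uniformly by a nonneg continuous sequence, using a mollification of the singular set where $F$ fails to be continuous. With $p\asymp(\log n/\epsilon_0)^{1/7}$ the corollary delivers $|\int F\,d\mu_{n,2p}-\int F\,d\nu_{n,2p}|=o(1)$ uniformly in $f$, completing the argument.

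The main obstacle is the third step: $\sigma(A)$ depends on sign information about the $\lambda_i(A)$'s that is invisible from $\Tilde{A}_{2p}$, so absorbing it into $f^{\mathrm{sym}}$ demands that the orbit-averaging genuinely commute with the $\mu_n$- and $\nu_n$-expectations uniformly over $\mathcal{R}_n$; the weakness of the constraint on $h$ (even in each argument, but not symmetric across arguments) makes this delicate. A secondary technicality is that $F$ is continuous only off a null set, so the compact-support approximation required by Corollary~\ref{cor1} must be carried out while preserving $\sup$-norm control.
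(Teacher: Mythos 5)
Your reduction to Corollary~\ref{cor1} via truncation and mollification is in the spirit of the paper, but the central step of your plan --- absorbing the reordering permutation $\sigma(A)$ by orbit-averaging --- does not work as described, and this is a genuine gap. The distributional symmetries you invoke do not act on the eigenvector tuple in the way you need: $A\overset{d}{=}PAP^{T}$ conjugates each eigenvector ($u_i\mapsto Pu_i$) but leaves the index $i$ (the eigenvalue rank) untouched, so it permutes nothing among the arguments of $h$, while $a_{11}\overset{d}{=}-a_{11}$ yields only the order reversal $i\mapsto n+1-i$. Hence the claim that replacing $f$ by its orbit average $f^{\mathrm{sym}}$ under the full sign/permutation action ``leaves $\int f\,d\mu_n$ unchanged'' is unjustified: exact exchangeability of the ordered eigenvectors of a Wigner matrix is not a consequence of these symmetries, and it is not automatic for $\nu_n$ either, since $\nu_n$ (diagonal variance $1/2$, off-diagonal variance $1$) is not orthogonally invariant. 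Moreover $F(\tilde A_{2p})$ equals $h$ evaluated at the specific realization-dependent reordering $\sigma(A)$, not at an orbit average, so $F(\tilde A_{2p})=f^{\mathrm{sym}}(A)$ also fails unless $F$ is itself built from the symmetrized $h$, in which case you still need $\int f^{\mathrm{sym}}d\mu_n=\int f\,d\mu_n$, i.e.\ precisely the exchangeability you have not established (a statement of essentially the same depth as the theorem). You flag this step as delicate, but the mechanism you propose for it is the part that fails. The paper does not take this route: it keeps one fixed test function of the matrix throughout, compares $\int fF\,d\mu_{n,2p}$ with $\int fF\,d\nu_{n,2p}$ by Corollary~\ref{cor1} (approximating $fF$ uniformly off a small set by the continuous functions $f(1-G_k)$), and then compares $\nu_{n,2p}$ with $\nu_n$ directly through their densities, these two Gaussian measures differing only in the diagonal variance, $c^2(2p,\mathbb{E}[a^4_{11}])/C_{2p}=1/2+O(1/p)$, at a cost $C/p$; note also that on the Gaussian side no matrix power is involved, so what is needed there is exactly this density comparison rather than any eigenvector identification.

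A second, lesser discrepancy is your first step. Under (\ref{condd0}) the entry law may be purely atomic (e.g.\ Rademacher), so ``absolute continuity of $a_{11}$'' is unavailable, and a Wegner-type gap estimate for such entries is a substantial external input that your proposal neither proves nor reduces to the paper's results. The paper obtains $\mathbb{P}(A\notin Sym^d(n))=o(1)$ internally: the indicator $1-F$ of a non-simple spectrum is approximated by mollified continuous functions $G_k$, Corollary~\ref{cor1} bounds $\int(1-F)\,d\mu_{n,2p}$ by its $\nu_{n,2p}$-integral plus the stated error, and the latter integral vanishes because the degenerate set is Lebesgue-null. Your remarks on applying Corollary~\ref{cor1} (compact support, nonnegativity, uniform approximation, tail control by concentration) are fine and essentially match the paper; the proposal stands or falls on the two points above.
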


Theorem~\ref{th7} offers a new perspective on universality: eigenvectors of a Wigner matrix \(A \in \mathbb{R}^{n \times n}\) behave like their counterparts for \(A^G=(a^{G}_{ij})_{1 \leq i, j \leq n},\) and up to an extent, \(\Tilde{A}_p \approx A^G\) is a central limit theorem. Although this is justified by the same combinatorial device as (\ref{cltsinaisosh}), a result that requires growth conditions on all moments (out of which subgaussianity is the most common), the current context permits a relaxation of such constraints primarily because eigenvectors are considerably more invariant than eigenvalues: properties of the former for \(A^p-\lambda I\) stream down to \(A,\) whereas the latter are evidently highly dependent on \(p.\) Furthermore, edge eigenvalue universality using the method of moments relies on the asymptotic behavior of \(\mathbb{E}[tr(A^p)]\) for \(p \approx tn^{2/3}\) and all \(t>0\) being universal: this forces the tails of \(a_{11}\) to be light (else the trace expectations might depend on all of its moments). Regarding tail conditions, an \(8+\epsilon_0\) moment is required chiefly for a truncation: a fourth is necessary as discussed above, and comparatively, \(\mathbb{E}[a_{11}^{2k}] \leq C(k)\) for all \(k \in \mathbb{N}\) is usually assumed when employing local laws (sometimes this can be relaxed to \(k \leq K\) for a sufficiently large \(K\)). The symmetry condition is mainly technical and might be dispensable, despite this relaxation not being pursued here (several works employ the method of moments and deal with distributions that are only centered: e.g., \cite{feralpeche}, \cite{oldpaper1}, \cite{oldpaper2}; it must also be pointed out symmetry plays no role in local laws). 
\par
For context, 
Haar measures on the orthogonal group \(O(n):=\{O \in \mathbb{R}^{n \times n}, OO^T=I\}\) and their connection with Gaussian distributions are discussed next: this is meant to shed light into why matrices with normal entries are easier to analyze than generic ones, even when it comes to eigenvectors. One definition of the normalized Haar measure on \(O(n)\) is the distribution of \(W_1,\) where \(Z \in \mathbb{R}^{n \times n}\) has i.i.d. centered standard normal entries, and \(Z=W_1DW_2^T\) is an SVD-decomposition. Several parameterizations of \(O(n)\) have been discovered: for instance, if \(O \in O(n), det(O)=1,\) then \(O=e^T, O=(I-S)(I+S)^{-1}\) (the latter is called Cayley's transform) for \(T,S\) skew-symmetric. Another construction, generalized Eulerian angles, was introduced by Raffenetti and Ruedenberg~\cite{raffanettiruedenberg}; it is a recurrent procedure in which the elements of \(O(n)\) are decomposed into products of \(\frac{n(n-1)}{2}\) factors, each depending solely on one parameter: the building blocks are of the form \(a_{pq}(\alpha) \in \mathbb{R}^{n \times n}, \alpha \in [0,2\pi],\)
\[(a_{pq}(\alpha))_{ij}=\begin{cases}
    1, \hspace{1.4cm} i=j, i \not \in \{p,q\} \\
    \cos{\alpha}, \hspace{0.8cm} i=j, i \in \{p,q\} \\
    \sin{\alpha}, \hspace{0.8cm} i=p, j=q \\
    -\sin{\alpha}, \hspace{0.5cm} i=q, j=p\\
    0,  \hspace{1.4cm} else
\end{cases}\]
for \(1 \leq p<q \leq n;\) notwithstanding the inherent elegance of this decomposition, neither obtaining these parameters for a given matrix nor defining  a measure on them leading to left or right invariance on \(O(n),\) the primary feature of Haar measures, is clear (see \cite{review}). 
\par
A myriad of definitions can be nevertheless concocted for the Haar measures on \(O_n\) using multivariate Gaussian distributions, whose equivalence up to multiplication by a scalar, guaranteed by the uniqueness of such objects up to constant factors, is far from evident. One recipe is choosing a well-behaved function (e.g., \(F\) nonzero and continuous) \(F:Sym(n) \to [0,\infty)\) with \(F(M)=F(VMV^T)\) for all \(M \in Sym(n), V \in O(n),\) and letting for any Borel measurable set \(S \subset O_n,\)
\begin{equation}\label{haar1}
    \int_{S}{d\tilde{\mu}}:=\int_{Z \in S_\alpha}{dZ},
\end{equation}
where \(Z \in \mathbb{R}^{n \times n}\) has i.i.d. centered standard normal entries (\(dZ:=(2\pi)^{-\frac{n^2}{2}}e^{-\frac{1}{2}\sum_{1 \leq i,j \leq n}{z^2_{ij}}}\prod_{1 \leq i,j \leq n}{dz_{ij}}),\) 
\[S_{\alpha}:=\{M \in \mathbb{R}^{n \times n}: \exists M_0 \in S, F((M-M_0)^T(M-M_0)) \leq \alpha\}\]
for some \(\alpha>0\) (note \(\tilde{\mu}\) is both left and right invariant under multiplication by elements of \(O(n)\)). Concentration and a careful selection of \(F\) ensure most random matrices \(Z \in \mathbb{R}^{n \times n}\) belong to \((O(n))_{\alpha_n}\) for a deterministic \(\alpha=\alpha_n,\) producing a probability measure from the right-hand side of (\ref{haar1}) upon scaling. 
\par
Consider a slightly different approach: in (\ref{haar1}) let
\begin{equation}\label{salpha}
    S_\alpha=\{X \in \mathbb{R}^{n \times n}: \forall i \in \{1,2, \hspace{0.05cm} ... \hspace{0.05cm},n\}, \exists M_i \in S, ||X_i-M_i|| \leq \alpha\},
\end{equation}
where \(Y_i \in \mathbb{R}^n\) is the \(i^{th}\) row of \(Y \in \mathbb{R}^{n \times n}.\) This is right-invariant because \((YV)_i=V^TY_i;\) since Haar measures (both left and right) are unique up to constant factors, and there exists one both right- and left- invariant (e.g., \(F(M)=tr(M^2)\)), (\ref{haar1}) and (\ref{salpha}) provide the Haar measures on \(O(n).\) This definition can be employed to derive delocalization properties of the eigenvectors of Wigner matrices: loosely speaking, one expects orthogonal matrices to be close to random matrices of the type \(\frac{1}{\sqrt{n}}Z\) (by the law of large numbers), entailing any \(S \subset O(n)\) with positive measure has elements small perturbations of some \(\frac{1}{\sqrt{n}}Z,\) whose largest entries are of order \(\sqrt{\frac{\log{n}}{n}}.\) Subsection~\ref{subsechaar2} presents in further detail how this representation renders, for instance,
\begin{equation}\label{haarprob}
    \mathbb{P}(U \in O(n): \max_{1 \leq i,j \leq n}{|u_{ij}|} \geq t \cdot \sqrt{\frac{\log{n}}{n}}) \leq n^2(1-\exp(-cn^{1-ct^2}))
\end{equation}
where \(c>0\) is universal, as well as (\ref{bouryau}) and (\ref{cipo}) (this discussion is included 
primarily because no reference with explicit justifications for Gaussian ensembles seems available).
\par
The last part of this paper is concerned with Wishart ensembles. Let \(X=(x_{ij})_{1 \leq i \leq n, 1 \leq i \leq N}\) have i.i.d. entries with \(\mathbb{E}[x_{11}]=0,\mathbb{E}[x^2_{11}]=1,\) and \(A=\frac{1}{N}XX^T.\) Silverstein~\cite{silverstein} extended the results of the seminal paper \cite{marchenkopastur} by Marchenko and Pastur and showed the almost sure weak convergence of the empirical spectral distribution of \(A=\frac{1}{N}XX^T\) to a probability distribution as \(n, N \to \infty\) with \(\frac{n}{N} \to \gamma \in (0,\infty):\) for \(F_n(x)=\frac{1}{N}\sum_{1 \leq i \leq N}{\chi_{x \geq \lambda_i(A)}},\)
\[F_n \xrightarrow[]{a.s.} F_\gamma,\]
where \(F_\gamma\) is the cdf of the probability distribution with density
\(f_{\gamma}(x)dx,\) and
\[f_\gamma(x)=\begin{cases} 
\frac{\sqrt{(b(\gamma)-x) \cdot (x-a(\gamma))}}{2\pi \gamma x}, & x \in [a(\gamma),b(\gamma)],\\
0, & x \not \in [a(\gamma),b(\gamma)],
\end{cases}\] 
for \(a(\gamma)=(1-\sqrt{\gamma})^2,b(\gamma)=(1+\sqrt{\gamma})^2.\) Lemma \(3.1\) in \cite{baisilvbook} gives its \(k^{th}\) moment is
\begin{equation}\label{betadef}
    \beta(k,\gamma):=\int_{a(\gamma)}^{b(\gamma)}{x^kf_\gamma(x)dx}=\sum_{0 \leq r \leq k-1}{\frac{1}{r+1}\binom{k}{r} \binom{k-1}{r}\gamma^{r}}.
\end{equation}
An elementary result unveils the recurrence satisfied by these moments, similar in spirit to the one satisfied by the Catalan numbers (which corresponds to \(\gamma=1\)).

\begin{lemma}\label{l1}
For \(k \in \mathbb{N},\) \(k \geq 2,\) and \(\gamma>0,\) 
\begin{equation}\label{momrec}
    \beta(k,\gamma)=(1+\gamma)\beta(k-1,\gamma)+\gamma \sum_{1 \leq a \leq k-2}{\beta(a,\gamma) \cdot \beta(k-a-1,\gamma)}.
\end{equation}
\end{lemma}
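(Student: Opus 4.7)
The plan is to prove (\ref{momrec}) as a polynomial identity in $\gamma$ by realizing both sides as Dyck-path enumerators graded by number of peaks. Using $\binom{k}{r+1}=\tfrac{k}{r+1}\binom{k-1}{r}$, each summand in (\ref{betadef}) equals the Narayana number
\[
\frac{1}{r+1}\binom{k}{r}\binom{k-1}{r}=\frac{1}{k}\binom{k}{r}\binom{k}{r+1}=N(k,r+1),
\]
so that $\beta(k,\gamma)=\sum_{s=1}^{k}N(k,s)\gamma^{s-1}$ for $k\ge 1$. Setting $\beta(0,\gamma):=1$, I would introduce the peak-refined Dyck-path polynomial
\[
P(k,\gamma):=\sum_{D\in\mathrm{Dyck}(2k)}\gamma^{\mathrm{peaks}(D)},
\]
where $\mathrm{Dyck}(2k)$ denotes the set of Dyck paths of length $2k$. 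The standard combinatorial meaning of $N(k,s)$ as the number of such paths with exactly $s$ peaks gives $P(0,\gamma)=1$ and $P(k,\gamma)=\gamma\,\beta(k,\gamma)$ for $k\ge 1$.

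The heart of the argument is the first-return decomposition: every nonempty Dyck path of semilength $k$ factors uniquely as an up-step, followed by a Dyck path $D_1$ of semilength $a$ raised by one unit, followed by a down-step, followed by a Dyck path $D_2$ of semilength $k-1-a$, for some $0\le a\le k-1$. Since $D_1$, when nonempty, begins with an up-step and ends with a down-step, the peak count splits as
\[
\mathrm{peaks}(D)=\mathrm{peaks}(D_1)+\mathrm{peaks}(D_2)+\mathbf{1}\{D_1\text{ is empty}\},
\]
the indicator capturing the leading up-down block becoming a peak precisely when the excursion collapses. Summing $\gamma^{\mathrm{peaks}(D)}$ over this decomposition gives
\[
P(k,\gamma)=\gamma\,P(k-1,\gamma)+\sum_{a=1}^{k-1}P(a,\gamma)\,P(k-1-a,\gamma),
\]
where the first term collects the $a=0$ case (using $P(0,\gamma)=1$) together with the extra $\gamma$ from the indicator.

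To finish, I isolate the $a=k-1$ term in the sum above (which uses $P(0,\gamma)=1$), substitute $P(j,\gamma)=\gamma\,\beta(j,\gamma)$ for $j\ge 1$, and divide through by $\gamma$: the $\gamma\,P(k-1,\gamma)$ contributes $\gamma\,\beta(k-1,\gamma)$, the isolated $a=k-1$ term contributes $\beta(k-1,\gamma)$, and the residual sum over $1\le a\le k-2$ contributes $\gamma\sum_{a=1}^{k-2}\beta(a,\gamma)\beta(k-1-a,\gamma)$, yielding exactly (\ref{momrec}). The step requiring most care is the peak bookkeeping in the decomposition: it is the $\mathbf{1}\{D_1\text{ empty}\}$ term, and the resulting asymmetric handling of the $a=0$ case, that produces the prefactor $(1+\gamma)\beta(k-1,\gamma)$ rather than a symmetric $2\gamma\,\beta(k-1,\gamma)$. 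An analytic alternative, if desired, is to verify that $f(x,\gamma):=\sum_{k\ge 0}\beta(k,\gamma)x^{k}$ satisfies the quadratic $\gamma x f^{2}-(1-(1-\gamma)x)f+1=0$, which is equivalent to (\ref{momrec}) together with the initial conditions $\beta(0,\gamma)=\beta(1,\gamma)=1$.
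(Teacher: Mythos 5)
Your proof is correct, and it takes a genuinely different route from the paper's. The paper argues analytically: it translates (\ref{momrec}) into the quadratic functional equation $\gamma X S^2(X)+((\gamma+1)X-1)S(X)+X=0$ for the generating function $S(X)=\sum_{k\ge 1}\beta(k,\gamma)X^k$, observes $S(x)=-1-\tfrac{1}{x}m(\tfrac1x)$ where $m$ is the Stieltjes transform of the Marchenko--Pastur density $f_\gamma$, and then invokes the known quadratic $\gamma z m^2(z)-(1-\gamma-z)m(z)+1=0$ (citing Bai--Silverstein). Your argument is instead purely combinatorial: you identify the coefficients $\tfrac{1}{r+1}\binom{k}{r}\binom{k-1}{r}$ in (\ref{betadef}) as Narayana numbers, reinterpret $\gamma\beta(k,\gamma)$ as the generating polynomial of Dyck paths of semilength $k$ graded by peaks, and obtain the recurrence via the first-return decomposition with careful bookkeeping of the extra peak created when the initial excursion is empty; the asymmetric $\mathbf{1}\{D_1\text{ empty}\}$ term is precisely what produces the $(1+\gamma)$ prefactor. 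The trade-off: the paper's route is short modulo an external fact about the Stieltjes transform, whereas yours is self-contained and elementary, and fits the Catalan/Dyck-path combinatorics that runs through the rest of the paper (e.g., (\ref{easycatalan}) and the description of $\mathcal{C}(l)$). Your parenthetical ``analytic alternative'' at the end is essentially the paper's argument in disguise: with $f=1+S$, your quadratic $\gamma x f^2-(1-(1-\gamma)x)f+1=0$ is exactly the paper's $\gamma X S^2+((\gamma+1)X-1)S+X=0$.
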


Lastly, the analog of (\ref{tracesinaisosh}) for Wishart matrices is comprised below.

\begin{theorem}\label{th1}
Let \(X=(x_{ij})_{1 \leq i \leq n, 1 \leq i \leq N}\) have i.i.d. entries with \(x_{11} \overset{d}{=} -x_{11},\) subgaussian of finite norm, i.e., \(\mathbb{E}[b^{2k}_{11}] \leq (Ck)^k\) for all \(k \in \mathbb{N}\) and some \(C>0,\) \(\mathbb{E}[x^2_{11}]=1,\) \(A=\frac{1}{N}XX^T,\) and \(\lim_{n \to \infty}{\frac{n}{N}}=\gamma \in (0,\infty).\) Then for \(p \in \mathbb{N},p=o(n^{1/2}),\gamma_n=\frac{n}{N},\) 
\begin{equation}\label{tracecovarnew}
    \mathbb{E}[tr(A^p)]=n\beta(p,\gamma_n) \cdot (1+O(\frac{p^2}{n})).
\end{equation}
\end{theorem}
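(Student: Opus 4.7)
The plan is to expand the trace as a sum over closed walks on the complete bipartite graph $K_{n,N}$ and identify the dominant contributions as those coming from bipartite plane trees, whose counting matches the Narayana-style decomposition behind Lemma~\ref{l1}. Writing
\[
\mathbb{E}[tr(A^p)] = \frac{1}{N^p}\sum_{(i_0,\ldots,i_{p-1})\in[n]^p}\sum_{(k_0,\ldots,k_{p-1})\in[N]^p}\mathbb{E}[x_{i_0k_0}x_{i_1k_0}x_{i_1k_1}x_{i_2k_1}\cdots x_{i_{p-1}k_{p-1}}x_{i_0k_{p-1}}],
\]
each summand corresponds to the closed walk $i_0\to k_0\to i_1\to k_1\to\cdots\to i_{p-1}\to k_{p-1}\to i_0$ of length $2p$ on $K_{n,N}$. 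The symmetry $x_{11}\overset{d}{=}-x_{11}$ combined with independence forces every edge of the underlying bipartite multigraph to be crossed an even number of times for the expectation to be nonzero, exactly as in the Sinai-Soshnikov expansion~\cite{sinaisosh}. Calling a walk \emph{leading} when every edge is traversed exactly twice and the underlying graph is a tree (so there are $p$ distinct edges and $p+1$ distinct vertices), each such walk contributes $\mathbb{E}[x_{11}^2]^p=1$ to the expectation.

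A leading walk is described by a rooted bipartite plane tree together with a choice of labels for its vertices; if the tree has $r+1$ type-$i$ vertices and $p-r$ type-$k$ vertices (necessarily with $r\in\{0,1,\ldots,p-1\}$), then the number of labelings is $n(n-1)\cdots(n-r)\cdot N(N-1)\cdots(N-p+r+1)$. I would show that the number of such rooted bipartite plane trees with $r+1$ type-$i$ vertices is the Narayana coefficient $\frac{1}{r+1}\binom{p}{r}\binom{p-1}{r}$, which is most naturally obtained inductively: cutting the walk at its first return to the root splits it into the excursion along the first subtree (attached to the root through some $k_0$) and the remaining excursion, which produces precisely the bivariate recurrence
\[
\beta(p,\gamma)=(1+\gamma)\beta(p-1,\gamma)+\gamma\sum_{1\leq a\leq p-2}\beta(a,\gamma)\beta(p-a-1,\gamma)
\]
established in Lemma~\ref{l1}, identifying $\gamma$ with the weight of a type-$k$ subtree root. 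Summing the leading contributions then gives
\[
\frac{1}{N^p}\sum_{r=0}^{p-1}\frac{1}{r+1}\binom{p}{r}\binom{p-1}{r}\, n(n-1)\cdots(n-r)\cdot N(N-1)\cdots(N-p+r+1)=n\beta(p,\gamma_n)\bigl(1+O(p^2/n)\bigr),
\]
since each corrective product $\prod_{j=1}^{r}(1-j/n)\prod_{j=0}^{p-r-2}(1-j/N)$ equals $1-O(p^2/n)$ uniformly in $r$ for $p=o(n^{1/2})$.

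It remains to show the non-leading walks contribute only $O(np^2\beta(p,\gamma_n)/n)$. These are walks whose underlying multigraph either has a repeated edge (some edge of multiplicity $\geq 4$) or is not a tree (contains a cycle), so it has strictly fewer than $p+1$ distinct vertices. I would import the Sinai-Soshnikov counting device adapted to the bipartite setting: classify walks by their number $s\geq 1$ of ``vertex deficiencies,'' showing that the number of walks with $s$ fewer distinct vertices than the tree case is bounded by $(Cp^2)^s$ times the leading count, while the labelings they admit contribute an extra factor $(\min(n,N))^{-s}$. Subgaussianity, $\mathbb{E}[|x_{11}|^{2m}]\leq (Cm)^m$, controls the excess edge moments $\prod_e\mathbb{E}[x_{11}^{2m_e}]$ for edges of multiplicity $2m_e\geq 4$ and yields a further uniformly bounded factor because the total edge multiplicity equals $2p$. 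Combining these bounds and summing over $s\geq 1$ yields the claimed error.

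The main obstacle is the bipartite bookkeeping in the Sinai-Soshnikov step: unlike the Wigner walks on $[n]$, one must separately track the first-visit instants of $i$- and $k$-vertices, and each ``surplus'' walk must be classified according to which type of vertex is revisited. This bifurcation is precisely what makes the analog of (\ref{trace}) more delicate for Wishart ensembles, as emphasized in the introduction; however, because Lemma~\ref{l1} already encodes the correct bipartite Catalan recurrence, tree walks and their perturbations can be handled by a two-color version of the standard argument, which gives the Narayana-indexed leading term together with the $O(p^2/n)$ remainder valid throughout $p=o(n^{1/2})$.
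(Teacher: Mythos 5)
Your identification of the leading contributions and their count matches the paper exactly: the Narayana coefficients $\tfrac{1}{r+1}\binom{p}{r}\binom{p-1}{r}$ are the cardinalities $|\mathcal{C}_r(p)|$, the falling-factorial labeling gives $n\beta(p,\gamma_n)$ up to a $1+O(p^2/n)$ product correction, and the recurrence you cite is precisely the one Lemma~\ref{l1} encodes and that the recursive description~$2.$ of $\mathcal{C}(p)$ produces. Where you diverge is in controlling the non-leading walks, and that is the genuinely hard part of the theorem, as you acknowledge. You propose to rebuild a two-color Sinai--Soshnikov argument directly in the bipartite setting, tracking vertex deficiencies and distinguishing first visits by type. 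The paper sidesteps this entirely: it introduces the orientation-reversal map $\mathcal{W}$ taking Wishart closed walks to Wigner even cycles on the vertex set $\{1,\dots,n+N\}$, observes via independence and H\"older that $\mathbb{E}[x_{\mathbf{g}}]\le\mathbb{E}[\mathcal{W}(x_{\mathbf{g}})]$ with equality on the leading set $\mathcal{C}(p)$ (where no directed edge is repeated), and thereby inherits the known Wigner error bound of type $(\ref{subas})$ rather than re-deriving a bipartite analogue. Your route is viable in principle but would require carrying out the full bipartite bookkeeping you flag as the main obstacle; you should either execute that count or, more economically, adopt the paper's $\mathcal{W}$-plus-H\"older comparison, which reduces the error control to a single application of the existing Wigner estimate and makes the argument shorter and essentially free of new combinatorics. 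Also note that your catch-all bound of $(\min(n,N))^{-s}$ per deficiency and ``uniformly bounded'' moment factor is where the real work lives if you go the direct route: one must match the growth of $\prod_e\mathbb{E}[x_{11}^{2m_e}]$ against the $p^{O(s)}$ multiplicity of surplus walks, which is precisely what the paper avoids by the transfer.
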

With (\ref{tracecovarnew}) under the belt, a rationale analogous to the one employed by Sinai and Soshnikov~\cite{sinaisosh} to infer (\ref{cltsinaisosh}) from (\ref{tracesinaisosh}) can be used a trace CLT for \(tr(A^p).\) As previously mentioned, extending the results above for Wigner matrices to Wishart ensembles seems feasible, despite this extension not being pursued here; furthermore, the arguments below can be utilized for complex-valued matrices as well (the structure of the sets whose elements are the leading cycles, \(\mathcal{C}(l),\) makes this transparent).
\par
The remainder of the paper is organized as follows: Theorem~\ref{th4}, Theorem~\ref{th55} with its two consequences (Corollary~\ref{cor1}, Theorem~\ref{th7}), and  Theorem~\ref{th1} are presented in sections~\ref{treigen}, \ref{weakconvv}, and \ref{traccov}, respectively. 

\section{Entry CLTs}\label{treigen}

This section consists of the proof of Theorem~\ref{th4}: as mentioned in the introduction, the primary ingredients behind it are the counting device introduced by Sinai and Soshnikov~\cite{sinaisosh}, and the dominant cycles underlying the traces below, looked at in detail in \cite{oldpaper}. The crux of the former is encompassed by
\[\mathbb{E}[tr(B^{2q})]=n(\mathbb{E}[b^{2}_{11}])^q C_q \cdot (1+o(1)),\]
for a random matrix \(B=(b_{ij})_{1 \leq i,j \leq n} \in \mathbb{R}^{n \times n},B=B^T,\) under growth conditions on \(q\) relatively to \(n\) and entry moments 
(e.g., \(q=o(n^{1/2}), o(1)=O(q^2n^{-1})\) when \(B\) has i.i.d. entries with  \(b_{11} \overset{d}{=} -b_{11},\) \(b_{11}\) subgaussian with bounded norm). Another version, proved in detail in subsection \(2.1\) of \cite{oldpaper} and employed here, is:
\par
\vspace{0.4cm}
Suppose \(p \in \mathbb{N}\) and \(B=(b_{ij})_{1 \leq i,j \leq n} \in \mathbb{R}^{n \times n},B=B^T,(b_{ij})_{1 \leq i \leq j \leq n}\) i.i.d., \(b_{11} \overset{d}{=} -b_{11},\) \(\mathbb{E}[b_{11}^2] \leq 1, \newline \mathbb{E}[b_{11}^{2l}] \leq L(n)n^{\delta(2l-4)}, 2 \leq l \leq p, \delta>0,\) and \(L:\mathbb{N} \to [1,\infty), L(n)<n^{2\delta}.\) Then for \(\delta_1=\frac{1}{4}-\delta,\)
\begin{equation}\label{subas}
    \mathbb{E}[tr(B^{2p})] \leq C_pn^{p+1}+C_pn^{p+1}L(n) \cdot Cp^2n^{-2\delta_1}
\end{equation}
when \(p \leq n^{\delta_1}, n \geq n(\delta).\)
\vspace{0.5cm}
\par
Let
\begin{equation}\label{deltadef}
    \delta=\delta(\epsilon_0)=\frac{2}{8+\epsilon_0}+\frac{\epsilon_0}{8(8+\epsilon_0)} \in (0,\frac{1}{4}), \hspace{0.5cm} A_s=\frac{1}{\sqrt{n}}(a_{ij}\chi_{|a_{ij}| \leq n^{\delta}}).
\end{equation}
A union bound entails \(A=A_s\) with high probability since
\begin{equation}\label{approxx}
    \mathbb{P}(A \ne A_s)=\mathbb{P}(\max_{1 \leq i \leq j \leq n}{|a_{ij}|}>n^{\delta}) \leq n^2 \cdot \mathbb{P}(|a_{11}|>n^{\delta}) \leq n^2 \cdot \frac{C(\epsilon_0)}{n^{\delta(8+\epsilon_0)}}=C(\epsilon_0)n^{-\epsilon_0/8}=o(1).
\end{equation}
Note (\ref{subas}) holds for \(\sqrt{n}A_s:\) its entries are i.i.d., symmetric, and 
\begin{equation}\label{decaymomm}
    \mathbb{E}[a^2_{11}\chi_{|a_{11}| \leq n^{\delta}}] \in [1-n^{-2\delta},1], \hspace{0.3cm} \mathbb{E}[a^{2l}\chi_{|a_{11}| \leq n^{\delta}}] \leq C(\epsilon_0) n^{\delta(2l-4)}, l \in \mathbb{N}, l \geq 2:
\end{equation}
when \(2 \leq l \leq 4+\epsilon_0/2,\) the last claim follows from Hölder inequality, 
\[\mathbb{E}[a^{2l}_{11}] \leq (\mathbb{E}[|a_{11}|^{8+\epsilon_0}])^{\frac{2l-2}{6+\epsilon_0}} \cdot (\mathbb{E}[a^{2}_{11}])^{\frac{8+\epsilon_0-2l}{6+\epsilon_0}} \leq C(\epsilon_0),\] 
and for \(l>4+\epsilon_0/2,\) 
\begin{equation}\label{boundbigl}
    \mathbb{E}[a_{11}^{2l}\chi_{|a_{11}| \leq n^{\delta}}] \leq \mathbb{E}[|a_{11}|^{8+\epsilon_0} \cdot n^{\delta(2l-8-\epsilon_0)}\chi_{|a_{11}| \leq n^{\delta}}] \leq C(\epsilon_0) n^{\delta(2l-8-\epsilon_0)}.
\end{equation}
Since Slutsky's lemma and Carleman condition entail
\begin{equation}\label{moments}
    \lim_{n \to \infty}{\frac{n^{l/2}}{c^{l/2}(p,\mathbb{E}[a_{11}^4])}\mathbb{E}[(e_i^TA_s^{2p}e_i-\frac{1}{n}\mathbb{E}[tr(A_s^{2p})])^l]}=\begin{cases}
    0, & l=2l_0-1,l_0 \in \mathbb{N}\\
    (l-1)!!, & l=2l_0,l_0 \in \mathbb{N}
    \end{cases},
\end{equation}
\begin{equation}\label{moments2}
    \lim_{n \to \infty}{\frac{n^{l/2}}{(C_{p}-C^2_{p/2}\chi_{2|p})^{l/2}}\mathbb{E}[(e_i^TA_s^{p}e_j)^l]}=\begin{cases}
    0, & l=2l_0-1,l_0 \in \mathbb{N}\\
    (l-1)!!, & l=2l_0,l_0 \in \mathbb{N}
    \end{cases}
\end{equation}
suffice to deduce Theorem~\ref{th4} (see, for instance, lemmas \(B.1, B.2\) in \cite{baisilvbook}), and they are the subject of the rest of this section. By a slight abuse of notation, \(A=A_s\) in what follows: in particular, \(\mathbb{P}(|a_{11}| \leq n^{\delta})=1.\) 
\par
The goal of this section is justifying (\ref{moments}) and (\ref{moments2}): before proceeding, a summary of the technique developed in \cite{sinaisosh}, upon which these convergences rely, is in order. Specifically, its outcome is a change of summation in 
\begin{equation}\label{genexp}
    \mathbb{E}[tr(B^{q})]=\sum_{(i_0,i_1, \hspace{0.05cm} ... \hspace{0.05cm}, i_{q-1})}{\mathbb{E}[b_{i_0i_1}b_{i_1i_2}...b_{i_{q-1}i_0}]}:=\sum_{\mathbf{i}:=(i_0,i_1, \hspace{0.05cm} ... \hspace{0.05cm}, i_{q-1},i_0)}{\mathbb{E}[b_{\mathbf{i}}]},
\end{equation}
from cycles \(\mathbf{i}:=(i_0,i_1, \hspace{0.05cm} ... \hspace{0.05cm}, i_{q-1},i_0)\) to tuples of nonnegative integers \((n_1,n_2, \hspace{0.05cm} ... \hspace{0.05cm},n_q).\) 
Recall terminology and notation from \cite{sinaisosh}, necessary in what is to come. Interpret \(\mathbf{i}:=(i_0,i_1, \hspace{0.05cm} ...\hspace{0.05cm} ,i_{q-1},i_0)\) as a directed cycle with vertices among \(\{1,2, \hspace{0.05cm} ... \hspace{0.05cm}, n\},\) call \((i_{k-1},i_k)\) its \(k^{th}\) edge and \(i_{k-1}\) its \(k^{th}\) vertex for \(1 \leq k \leq q,\) where \(i_{q}:=i_0.\) By convention, for \(u,v \in \{1,2, \hspace{0.05cm} ... \hspace{0.05cm}, n\},\) \((u,v)\) denotes a directed edge from \(u\) to \(v,\) whereas \(uv\) is undirected (the former are the building blocks of the cycles underlying the trace in (\ref{genexp}), while the latter determine their corresponding expectations): consequently \(uv=vu,\) and \(m(uv)\) denotes its multiplicity in \(\mathbf{i},\) i.e., 
\[m(uv)=|\{r:1 \leq r \leq q, i_{r-1}i_r=uv\}|.\] 
Say \(\mathbf{i}\) is an \textit{even cycle} if each undirected edge has even multiplicity in it, i.e., \(\forall uv, 2|m(uv)\) (otherwise \(\mathbf{i}\) is \textit{odd}). 
Because the entries of \(A\) are symmetric, solely even cycles have non-vanishing contributions in (\ref{genexp}) (such trace expectations have also been analyzed when the entries do not have symmetric distributions: e.g., \cite{feralpeche}, \cite{oldpaper1}, \cite{oldpaper2}).
\par
The desired change of summation is then achieved as follows. 
For \(\mathbf{i},\) call an edge \((i_k,i_{k+1})\) and its right endpoint \(i_{k+1}\) \textit{marked} if an even number of copies of \(i_ki_{k+1}\) precedes it, 
\[|\{t \in \mathbb{Z}: 0 \leq t \leq k-1, i_ti_{t+1}=i_ki_{k+1}\}| \in 2\mathbb{Z},\] 
and for \(k \geq 0,\) denote by \(N_{\mathbf{i}}(k)\) the set of \(j \in \{1,2, \hspace{0.05cm} ... \hspace{0.05cm}, n\}\) marked exactly \(k\) times in \(\mathbf{i},\) and \(n_k:=|N_{\mathbf{i}}(k)|.\) This provides a mapping from cycles \(\mathbf{i}\) to tuples of nonnegative integers \((n_0,n_1, \hspace{0.05cm} ... \hspace{0.05cm},n_q):\) in particular, any vertex \(j \in \{1,2, \hspace{0.05cm} ... \hspace{0.05cm}, n\}\) of \(\mathbf{i},\) apart perhaps from \(i_0,\) is marked at least once (the first edge of \(\mathbf{i}\) containing \(j\) is of the form \((i,j)\) because \(i_0 \ne j,\) and no earlier edge is incident with \(j\)), and each even cycle \(\mathbf{i}\) has \(q/2\) marked edges, whereby
\begin{equation}\label{tuplecond}
    \sum_{0 \leq k \leq q}{n_k}=n, \hspace{0.2cm} \sum_{1 \leq k \leq q}{kn_k}= q/2.
\end{equation} 
Having constructed \((n_0,n_1, \hspace{0.05cm} ... \hspace{0.05cm},n_{q}),\) it remains to bound the number of cycles mapped to a given tuple and their expectations, a task steps \(1-5\) summarized below undertake (see subsection \(2.1\) in \cite{oldpaper} for details).
Fix \((n_0,n_1, \hspace{0.05cm} ... \hspace{0.05cm}, n_p),\) and let \(\mathbf{i}\) be an even cycle of length \(2p\) mapped to it by the procedure just described.
\vspace{0.3cm}
\par
\underline{Step \(1.\)} Map \(\mathbf{i}\) to a Dyck path \((s_1,s_2, \hspace{0.05cm} ... \hspace{0.05cm} ,s_{2p}),\) where \(s_k=+1\) if \((i_{k-1},i_k)\) is marked, and \(s_k=-1\) if \((i_{k-1},i_k)\) is unmarked: the number of such paths is 
\[C_p=\frac{1}{p+1}\binom{2p}{p}.\]
\par
\underline{Step \(2.\)} Once the positions of the marked edges in \(\mathbf{i}\) are chosen (i.e., a Dyck path), establish the order of their marked vertices. The number of possibilities is
\[\frac{p!}{\prod_{k \geq 1}{(k!)^{n_k}}} \cdot \frac{1}{\prod_{k \geq 1}{n_k!}}.\]
\par
\underline{Step \(3.\)} Select the vertices appearing in \(\mathbf{i},\)
\[V(\mathbf{i}):=\{i_k,0 \leq k \leq 2p\},\] 
one at a time, by reading the edges of \(\mathbf{i}\) in ascending order. 
\par
\underline{Step \(4.\)} Choose the remaining vertices of \(\mathbf{i}\) from \(V(\mathbf{i}),\) by reading anew the edges of \(\mathbf{i}\) in order, beginning at \((i_0,i_1)\) (step \(3\) only established the first appearance of each element of \(V(\mathbf{i})\) in \(\mathbf{i}\)). Solely the right ends of the unmarked edges are yet to be decided: \((i_0,i_1)\) is fixed as \(i_0, i_1\) have already been chosen (\(i_1\) is marked); by induction, any subsequent edge has its left end fixed, and therefore only its right end must be selected. This yields marked edges are fully labeled: step \(2\) determines their positions, while step \(3\) appoints their right endpoints. The overall bound for this stage remains as in \cite{sinaisosh},
\begin{equation}\label{bdstep444}
    \prod_{k \geq 2}{(2k)^{kn_k}},
\end{equation}
(see, for instance, lemma \(1\) in \cite{oldpaper} for a complete proof).
\par
\underline{Step \(5.\)} Bound the expectation generated by \(\mathbf{i}.\)
\vspace{0.3cm}
\par
Regarding the recursion referred to earlier, this concerns the even cycles giving the first-order term in (\ref{genexp}): keeping the notation from \cite{oldpaper}, let \(\mathcal{C}(l)\) be the set of pairwise non-isomorphic even cycles of length \(2l,\) with \(n_1=l,\) and the first vertex unmarked (call two cycles \(\mathbf{i},\mathbf{j}\) of length \(q\) \textit{isomorphic} if \(i_s=i_t \Longleftrightarrow j_s=j_t\) for all \(0 \leq s,t \leq q\)). This collection of sets has two key properties:
\vspace{0.1cm}
\par
\(1. \hspace{0.1cm} |\mathcal{C}(l)|=C_l,\hspace{0.5cm}\) 
\par
\(2.\) a recursive description holds: \(\mathcal{C}(l+1)\) consists of three pairwise disjoint families, 
\par
\((i)\) \(\mathbf{i}=(v_0,v_1, \hspace{0.05cm} ... \hspace{0.05cm}, v_{2l-1},v_0) \in \mathcal{C}(l)\) with a loop at \(v_0: (v_0,u,v_0,v_1,\hspace{0.05cm} ... \hspace{0.05cm}, v_{2l-1},v_0)\) and \(u\) 
new (i.e., not among the vertices of \(\mathbf{i}\));
\par
\((ii)\) \(\mathbf{i}=(v_0,v_1, \hspace{0.05cm} ... \hspace{0.05cm}, v_{2l-1},v_0) \in \mathcal{C}(l)\) with a loop at \(v_1: (v_0,v_1,u,v_1,v_2,\hspace{0.05cm} ... \hspace{0.05cm}, v_{2l-1},v_0)\) and \(u\) new;
\par
\((iii)\) \((u_0,u_1,u_2,S_1,u_2,u_1,S_2,u_0)\) with \((u_2,S_1,u_2) \in \mathcal{C}(a), (u_0,u_1,S_2,u_0) \in \mathcal{C}(l-a),\) no vertex appearing in both, \(u_0,u_1,u_2\) pairwise distinct, and \(1 \leq a \leq l-1.\) 
\vspace{0.1cm}
\par
One straightforward consequence of \(1.\) is essential when justifying Theorem~\ref{th4}:
\begin{center}
    the elements of \(\mathcal{C}(l)\) are unions of cycles of the form \((i_0,\mathcal{L},i_0),\) 
\end{center}
\begin{center}
    where \(\mathcal{L} \in \cup_{q \leq l-1}{\mathcal{C}(q)},\) any two sharing no vertex but \(i_0\) and \(i_0 \not \in \mathcal{L}\)
\end{center} 
(these elements belong to \(\mathcal{C}(l)\) as long as their lengths add up to \(2l\) in light of the definition of  \(\mathcal{C}(l),\) and their number is
\begin{equation}\label{easycatalan}
    \sum_{1 \leq k \leq l}{\sum_{
l_1+...+l_k=l-k,l_i \geq 0}{C_{l_1}C_{l_2}...C_{l_k}}}=C_l:
\end{equation}
this follows from \(C_{l+1}=\sum_{0 \leq k \leq l}{C_kC_{l-k}},\) and induction on \(l\) after rewriting the left-hand side term as
\[\sum_{1 \leq k \leq l}{\sum_{l_1+...+l_k=l-k,l_i \geq 0}{C_{l_1}C_{l_2}...C_{l_k}}}=C_{l-1}+\sum_{0 \leq l_1 \leq l-2}{C_{l_1}\sum_{2 \leq k \leq l-l_1}{\sum_{l_2+...+l_k=l-k-l_1}{C_{l_2}...C_{l_k}}}},\]
the first term corresponding to \(k=1).\) This recursive description yields that for \(p,m \in \mathbb{N},\)
\begin{equation}\label{keyeq}
    \sum_{l_1+...+l_m=p, l_i \geq 0}{C_{l_1}C_{l_2}...C_{l_m}}=f_{m+p,m+1}=\binom{m+2p-1}{p}-\binom{m+2p-1}{p-1}=\frac{m}{m+2p}\binom{m+2p}{p},
\end{equation}
where for \(2 \leq k \leq l+1,\) \(f_{l,k}\) is the number of elements of \(\mathcal{C}(l)\) in which the first vertex appears exactly \(k-1\) times: note the sum is the number of elements of \(\mathcal{C}(p+m)\) with the first vertex appearing exactly \(m\) times, which by definition in \cite{oldpaper} is \(f_{m+p,m+1},\) computed in lemma \(4\) therein. The numbers \(f_{l,k}\) are essential for the variance of the diagonal entries as it will become apparent in the following subsection, while property \(2.\) is critical in the proof of Theorem~\ref{th1} (subsection~\ref{4sect.1}) since it allows counting the dominant cycles in (\ref{trcov}) by induction on \(p.\)
\par
In the rest of this section,
\begin{itemize}
    \item \ref{treigen1} presents the proof of (\ref{moments}) for \(l \leq 2;\)

    \item \ref{treigen2} introduces lemmas on which computing high moments (\(l>2\)) relies;
    
    \item \ref{treigen22} completes the justification of (\ref{moments}) by considering \(l>2;\)
    
    \item \ref{treigen3} argues (\ref{moments2}).
\end{itemize}

\subsection{Diagonal Entries}\label{treigen1}

The goal is (\ref{moments}) for \(l \leq 2:\) the parity of the power of \(A\) in this case is irrelevant. Suppose without loss of generality that \(i=1:\) \(l=1\) is clear by linearity of expectation, and for \(l=2,\)
\begin{equation}\label{startingpoint}
    \mathbb{E}[(e_1^TA^{p}e_1-\frac{1}{n}\mathbb{E}[tr(A^{p})])^2]=n^{-p}\sum_{(\mathbf{i},\mathbf{j}): \mathbf{i},\mathbf{j} \in \mathcal{S}(p,1)}{(\mathbb{E}[a_{\mathbf{i}} \cdot a_{\mathbf{j}}]-\mathbb{E}[a_{\mathbf{i}}] \cdot \mathbb{E}[a_{\mathbf{j}}])},
\end{equation}
where \(\mathcal{S}(q,1):=\{\mathbf{i}=(1,i_1,i_2, \hspace{0.05cm} ... \hspace{0.05cm}, i_{q-1},1), i_1, i_2, \hspace{0.05cm} ... \hspace{0.05cm},i_{q-1} \in \{1,2, \hspace{0.05cm}... \hspace{0.05cm},n \}\}.\) By independence, the contribution of a pair \((\mathbf{i},\mathbf{j})\) does not vanish if and only if \(\mathbf{i},\mathbf{j}\) share some edge, with their union being an even cycle of length \(2p:\) all the pairs considered next are assumed to satisfy this property.
\par
Similarly to Sinai and Soshnikov~\cite{sinaisosh}, the key is gluing \(\mathbf{i},\mathbf{j}\) into an even cycle \(\mathcal{P}\) of length \(2p-2.\)  This is done by using the first common undirected edge \(e\) and cutting two of its copies: specifically, let \(i_{t-1}i_{t}=j_{s-1}j_{s}\) with \(t,s\) minimal in this order, 
\[t=\min{\{1 \leq k \leq p, \exists 1 \leq q \leq p, i_{k-1}i_{k}=j_{q-1}j_{q}\}}, \hspace{0.4cm} s=\min{\{1 \leq q \leq p, j_{q-1}j_{q}=i_{t-1}i_{t}\}}.\] 
\(\mathcal{P}\) is obtained by fusing both cycles along this common edge, whose copies are then erased: it traverses \(\mathbf{i}\) up to \(i_{t-1}i_{t},\) uses it as a bridge to switch to \(\mathbf{j},\) traverse all of it, and get back to the rest of \(\mathbf{i}\) upon returning to \(j_{s-1}j_s=i_{t-1}i_{t}:\) specifically, if \((i_{t-1},i_{t})=(j_{s},j_{s-1}),\) then
\begin{equation}\label{paste1}
    \mathcal{P}:=(i_0, \hspace{0.05cm} ... \hspace{0.05cm}, i_{t-2},i_{t-1},j_{s+1}, \hspace{0.05cm} ... \hspace{0.05cm}, j_{p-1},j_0, \hspace{0.05cm} ... \hspace{0.05cm}, j_{s-1},i_{t+1}, \hspace{0.05cm} ... \hspace{0.05cm}, i_{p});
\end{equation}
else \((i_{t-1},i_{t})=(j_{s-1},j_{s}),\) and 
\begin{equation}\label{paste2}
     \mathcal{P}:=(i_0, \hspace{0.05cm} ... \hspace{0.05cm},i_{t-2},i_{t-1},j_{s-2}, \hspace{0.05cm} ... \hspace{0.05cm},j_0,j_{p-1}, \hspace{0.05cm} ... \hspace{0.05cm}, j_{s},i_{t+1}, \hspace{0.05cm} ... \hspace{0.05cm},i_{p}).
\end{equation}
\(\mathcal{P}\) is an even cycle of length \(2 \cdot p-2=2p-2,\) and \(e\) has endpoints at distance \(p-1\) in \(\mathcal{P}\) (the undirected edge between the \(t^{th}\) and \((t+p-1)^{th}\) vertices in \(\mathcal{P}\) is \(e\)).
By convention, all graphs \(\mathcal{G}\) in what follows are directed, and \(e=uv \in \mathcal{G}\) is a shorthand for \((u,v) \in E(\mathcal{G}).\)
\par
Take first the leading pairs in (\ref{startingpoint}), namely the preimages of their merges: since the dominant configurations (i.e., yielding the largest contribution in expectation) among even cycles of length \(2l\) are the elements of \(\mathcal{C}(l),\) this remains true here as well via a similar rationale to the one for the trace (this is detailed at the end of the subsection: intuitively this should hold because the cycles resulting from these merges simply have the first vertex equal to \(1\)); in the current situation, there is an additional factor of \(n^{-1}\) in step \(3,\) accounting for the set vertex \(1,\) which is no longer to be chosen. Consider now the preimage of the gluing given by (\ref{paste1}), (\ref{paste2}) for \(\rho=(\rho_{0k})_{0 \leq k \leq p-1} \in \mathcal{C}(p-1)\) with \(\rho_{00}=1.\) Suppose \(\rho,\) under description above (\ref{easycatalan}), has \(k\) cycles, denoted by \(\mathcal{L}_1, \mathcal{L}_2, \hspace{0.05cm} ... \hspace{0.05cm}, \mathcal{L}_k,\) of lengths \(2l_1,2l_2, \hspace{0.05cm} ... \hspace{0.05cm},2l_k\) and with first vertices \(i_1,i_2, \hspace{0.05cm} ... \hspace{0.05cm},i_k,\) respectively; continue denoting by \(i_0\) the first vertex of \(\rho\) (\(i_0=1\)) for the sake of simplicity. The condition on the lengths is 
\begin{equation}\label{leqq22}
    \sum_{1 \leq j \leq k}{(l_j+1)}=p-1.
\end{equation}
For \((\mathbf{i},\mathbf{j})\) in the preimage of \(\rho,\) denote by \(r\) the position with \(e:=\rho_{0r} \rho_{0(r+p-1)}\) the first (in the sense aforesaid) shared edge between \(\mathbf{i}\) and \(\mathbf{j}\) 
with \(q_1,q_2 \in \{1,2,\hspace{0.05cm}...\hspace{0.05cm},k\}\) such that 
\[r \in [2\sum_{1 \leq j<q_1}{(l_j+1)},2\sum_{1 \leq j \leq q_1}{(l_j+1)}), \hspace{0.5cm} p-1+r \in [2\sum_{1 \leq j<q_2}{(l_j+1)},2\sum_{1 \leq j \leq q_2}{(l_j+1)}].\] 
Then \(\rho_{0(r+1)}\) is the first vertex in \(\mathcal{L}_{q_1},\) i.e., \(r=L=2\sum_{1 \leq j<q_1}{l_j}\) since otherwise the path \((\rho_{0L},\rho_{0(L+1)},\hspace{0.05cm}...\hspace{0.05cm},\rho_{0r})\) is not empty and thus must share an edge with \((\rho_{0r},\rho_{0(r+1)},\hspace{0.05cm}...\hspace{0.05cm},\rho_{0(L+2l_{q_1})}),\) contradicting the minimality of \(r\) (the path has first vertex \(i_0\) and is contained in \((i_0,\mathcal{L}_{q_1},i_0) \in \mathcal{C}(l_{q_1}+1),\) whereby it can only be a cycle if it is \((i_0,\mathcal{L}_{q_1},i_0),\) in which case \(r=L+2l_{q_1},\) contradicting the definition of \(q_1;\) hence the path is not a cycle and must share an edge with the remainder of the cycle, \((\rho_{0r},\rho_{0(r+1)},\hspace{0.05cm}...\hspace{0.05cm},\rho_{0(L+2l_{q_1})})\)). This entails that the pairs in the preimage of interest can be described by tuples \((\rho_1,t_1,\rho_2,t_2,\rho_3,o)\) with 
\[\rho_1 \in \mathcal{C}(a),\rho_2 \in \mathcal{C}(b),\rho_3 \in \mathcal{C}(p-1-a-b), \hspace{0.5cm} 0 \leq a,b \leq \frac{p-1}{2}, \hspace{0.5cm} o \in \{0,1\},\]
\[t_1\in \{j: 0 \leq j \leq 2a-1, \rho_{1t_1}=i_0\},\hspace{0.5cm} t_2 \in \{j: 0 \leq j \leq 2b-1, \rho_{2t_2}=i_0\},\] \(\rho_1:=(\rho_{10},\rho_{11},\hspace{0.05cm}...\hspace{0.05cm},\rho_{1(2a-1)},\rho_{10}),\rho_2:=(\rho_{20},\rho_{21},\hspace{0.05cm}...\hspace{0.05cm},\rho_{2(2b-1)},\rho_{20}),\rho_3:=(\rho_{30},\rho_{31},\hspace{0.05cm}...\hspace{0.05cm},\rho_{3(2p-2-a-b-1)},\rho_{20}),\) \(\rho_{10}=\rho_{20}=\rho_{30}=i_0,\) \(i_0 \not \in \{\rho_{3j}, 1 \leq j \leq 2p-a-b-3\},\) and \(\rho_{k_1j_1}=\rho_{k_2j2}\) for \(k_1 \ne k_2\) entailing \(\rho_{k_1j_1}=\rho_{k_2j2}=i_0.\)
The mapping underlying this takes \((\mathbf{\rho}_1,t_1,\mathbf{\rho}_2,t_2,\mathbf{\rho}_3,o)\) to \((\mathbf{i},\mathbf{j}),\) where \(\mathbf{i}\) is \(\rho_1\) with a cycle attached at position \(t_1,\) given by an edge at it connected to the last \(p-1-2a\) edges of \(\rho_3,\) while \(\mathbf{j}\) is \(\rho_2\) with a cycle attached at position \(t_2,\) given by an edge at it connected to the first \(p-1-2b\) edges of \(\rho_3,\) with \(o\) yielding the orientation of the latter cycle. Specifically,   
\[\mathbf{i}=(\rho_{10},\rho_{11},\hspace{0.05cm}...\hspace{0.05cm},\rho_{1(t_1-1)},\rho_{1t_1},\rho_{3(p-1-2b)},\rho_{3(p-2b)},\hspace{0.05cm}...\hspace{0.05cm},\rho_{3(2p-2-2a-2b-1)},\rho_{30},\rho_{1(t_1+1)},\hspace{0.05cm}...\hspace{0.05cm},\rho_{1(2a-1)},\rho_{10}),\]
\[\mathbf{j}=\begin{cases}
    (\rho_{20},\rho_{21},\hspace{0.05cm}...\hspace{0.05cm},\rho_{2(t_2-1)},\rho_{2t_2},\rho_{31},\rho_{32},\hspace{0.05cm}...\hspace{0.05cm},\rho_{3(p-1-2b)},\rho_{2t_2},\rho_{2(t_2+1)},\hspace{0.05cm}...\hspace{0.05cm},\rho_{2(2b-1)},\rho_{20}), \hspace{1.5cm} o=0,\\
    (\rho_{20},\rho_{21},\hspace{0.05cm}...\hspace{0.05cm},\rho_{2(t_2-1)},\rho_{2t_2},\rho_{3(p-1-2b)},\rho_{3(p-2-2b)},\hspace{0.05cm}...\hspace{0.05cm},\rho_{31},\rho_{2t_2},\rho_{2(t_2+1)},\hspace{0.05cm}...\hspace{0.05cm},\rho_{2(2b-1)},\rho_{20}), \hspace{0.4cm} o=1.
\end{cases}\]
This mapping is injective because \(o, t_1,t_2,a,b,\) hence \(\rho_1,\rho_2,\rho_3,\) are functions of \((\mathbf{i},\mathbf{j}):=((u_k)_{0 \leq k \leq p},(v_k)_{0 \leq k \leq p}),\)
\[t_1=\max\{2k: (u_0,u_1,\hspace{0.05cm}...\hspace{0.05cm},u_{2k-1},u_{2k}) \in \mathcal{C}(k)\},\]
\[p-1-2a=\max{\{p-1-2d: i_0 \not \in \{u_{y},t_1+1 \leq y \leq t_1+p-1-2d\}\}},\] 
\[t_2=\max\{2k: (v_0,v_1,\hspace{0.05cm}...\hspace{0.05cm},v_{2k-1},v_{2k}) \in \mathcal{C}(k)\},\]
\[p-1-2b=\max{\{p-1-2d: i_0 \not \in \{v_{y},t_2+1 \leq y \leq t_2+p-1-2d\}\}},\] 
\[o=\chi_{(u_{t_1},u_{t_1+1}) \in \{(v_t,v_{t+1}),0 \leq t \leq p-1\}},\]
as well as surjective: under the notation below (\ref{leqq22}), for \(L_2=2\sum_{1 \leq j \leq q_2}{(l_j+1)},\)
\[t_1=r,\hspace{0.2cm} \rho_1=(\rho_{00},\rho_{01},\hspace{0.05cm}...\hspace{0.05cm},\rho_{0r},\rho_{0L_2},\hspace{0.05cm}...\hspace{0.05cm},\rho_{0(2p-3)},\rho_{0(2p-2)}),\] 
\[t_2=\max\{2k: (v_0,v_1,\hspace{0.05cm}...\hspace{0.05cm},v_{2k-1},v_{2k}) \in \mathcal{C}(k)\}, \hspace{0.2cm} \rho_2=(v_0,v_1,\hspace{0.05cm}...\hspace{0.05cm},v_{t_2},v_{t_3},v_{t_3+1}\hspace{0.05cm}...\hspace{0.05cm},v_{p}),\]
\[\hspace{0.5cm}o=\chi_{(\rho_{0r},\rho_{0(r+p-1)}) \in \{(v_t,v_{t+1}),0 \leq t \leq p-1\}}, \hspace{0.5cm} \rho_3=\begin{cases}
    (v_{t_2},v_{t_2+1},\hspace{0.05cm}...\hspace{0.05cm},v_{t_3-1},v_{t_3},\rho_{0(r+p-1)},\rho_{0(r+p)},\hspace{0.05cm}...\hspace{0.05cm},\rho_{0L_2}), \hspace{0.1cm} o=0,\\
    (v_{t_3},v_{t_3-1},\hspace{0.05cm}...\hspace{0.05cm},v_{t_2+1},v_{t_2},\rho_{0(r+p-1)},\rho_{0(r+p)},\hspace{0.05cm}...\hspace{0.05cm},\rho_{0L_2}), \hspace{0.1cm} o=1,
\end{cases}\]
where 
\[\mathbf{j}=(v_t)_{0 \leq t \leq p}, \hspace{0.5cm} t_3=1+\max\{t: t \geq t_2, i_0 \not \in \{v_{y},t_2+1 \leq y \leq t\}\}\]
(\(\rho_3\) equals, up to orientation, to \((i_0,\mathcal{L}_{q_1},i_0),\) and \(\rho_1,\rho_2,\rho_3\) are disjoint unions of pairwise distinct cycles among \(((i_0,\mathcal{L}_{q},i_0))_{1 \leq q \leq k}:\) this entails \((\rho_1,t_1,\rho_2,t_2,o)\) belongs to the domain of the aforesaid mapping).
\par
Consider now the contributions of the preimages, \(\mathbb{E}[a_{\mathbf{i}} \cdot a_{\mathbf{j}}]-\mathbb{E}[a_{\mathbf{i}}] \cdot \mathbb{E}[a_{\mathbf{j}}],\) which depends on whether \(e \in \rho\) or \(e \not \in \rho.\)
Under the tuple notation above, the former occurs exactly when \(\rho_{1t_1}\rho_{3(p-1-2b)}\) appears in \(\rho_3,\) which is equivalent to the position \(p-1-2b\) being the second or second to last in \(\rho_3\) insomuch as \(\rho_3\) contains solely one copy of \(i_0=\rho_{1t_1}.\) This gives that the contribution of the leading pairs is
\[n^{p-1} \cdot (1+O(\frac{p^2}{n})) \cdot (1+O(pn^{-2\delta})) \cdot  [\mathcal{A}_1(p)+(\mathbb{E}[a^4_{11}]-2)\mathcal{A}_2(p)],\]
where \(\mathcal{A}_1(p)\) is the size of the preimage modulo vertex isomorphisms, and \(\mathcal{A}_2(p)\) is the number of elements in it with \(e \in \rho:\) the first two factors come from choosing the vertices of the cycles among \(\{1,2,\hspace{0.05cm}...\hspace{0.05cm},n\},\) one of them being \(1,\) while the last two are due to
\[\mathbb{E}[a_{\mathbf{i}} \cdot a_{\mathbf{j}}]-\mathbb{E}[a_{\mathbf{i}}] \cdot \mathbb{E}[a_{\mathbf{j}}]=\begin{cases}
    (\mathbb{E}[a^{2}_{11}\chi_{|a_{11}| \leq n^{\delta}}])^{p}, \hspace{6.8cm} e \not \in \rho,\\
    (\mathbb{E}[a^{2}_{11}\chi_{|a_{11}| \leq n^{\delta}}])^{p-2} \cdot [\mathbb{E}[a^{4}_{11}\chi_{|a_{11}| \leq n^{\delta}}]-(\mathbb{E}[a^{2}_{11}\chi_{|a_{11}| \leq n^{\delta}}])^2],  \hspace{0.6cm} e \in \rho,
\end{cases}\]
from \(e \not \in \rho\) entailing \(\mathbb{E}[a_{\mathbf{i}}] \cdot \mathbb{E}[a_{\mathbf{j}}]=0, \mathbb{E}[a_{\mathbf{i}} \cdot a_{\mathbf{j}}]=(\mathbb{E}[a^{2}_{11}\chi_{|a_{11}| \leq n^{\delta}}])^{p}\) (\(e\) has multiplicity \(1\) in both \(\mathbf{i},\mathbf{j}\)), and otherwise \(e \in \rho,\)  
\(\mathbb{E}[a_{\mathbf{i}}]=\mathbb{E}[a_{\mathbf{j}}]=(\mathbb{E}[a^{2}_{11}\chi_{|a_{11}| \leq n^{\delta}}])^{p/2},\mathbb{E}[a_{\mathbf{i}} \cdot a_{\mathbf{j}}]=(\mathbb{E}[a^{2}_{11}\chi_{|a_{11}| \leq n^{\delta}}])^{p-2} \cdot \mathbb{E}[a^{4}_{11}\chi_{|a_{11}| \leq n^{\delta}}],\) as well as 
\[\mathbb{E}[a^{2}_{11}\chi_{|a_{11}| \leq n^{\delta}}] \in [1-n^{-2\delta},1], \hspace{0.6cm} \frac{\mathbb{E}[a^{4}_{11}\chi_{|a_{11}| \leq n^{\delta}}]}{\mathbb{E}[a^{4}_{11}]}=1-\frac{\mathbb{E}[a^{4}_{11}\chi_{|a_{11}|>n^{\delta}}]}{\mathbb{E}[a^{4}_{11}]} \in [1-C(\epsilon_0)n^{-(4+\epsilon_0)\delta},1].\]
\par
The tuple description above yields
\begin{equation}\label{ref1}
    \mathcal{A}_1(p)=2\sum_{0 \leq a,b \leq \frac{p-1}{2}}{s(a)s(b)C_{p-2-a-b}}, \hspace{0.2cm} \mathcal{A}_2(p)=2\chi_{2|p} \cdot [2s(\frac{p}{2}-1)\sum_{0 \leq a \leq \frac{p}{2}-1}{s(a)C_{\frac{p}{2}-1-a}-s^2(\frac{p}{2}-1)}],
\end{equation}
where \(f_{m,t+1}\) is defined by (\ref{keyeq}) and 
\begin{equation}\label{formm}
    s(0)=1, \hspace{0.5cm} s(m)=\sum_{1 \leq t \leq m}{tf_{m,t+1}} \hspace{0.4cm}  m \geq 1
\end{equation}
since for fixed \(a,b,\) there are \(s(a),s(b)\) possibilities for \((\rho_1,t_1),\) and \((\rho_2,t_2),\) respectively, two values for \(o,\) and \(C_{p-2-a-b}\) possibilities for \(\rho_3\) (it has length \(2(p-1-a-b)\) and its first vertex appears solely once in it), and additionally \(p-1-2b \in \{1,2p-2-2a-2b-1\}\) in the configurations underlying \(\mathcal{A}_2(p),\) equivalent to \(a=\frac{p}{2}-1\) or \(b=\frac{p}{2}-1.\) Lemma~\ref{lems} in the Appendix encapsulates how these exact formulae for \(\mathcal{A}_1(p),\mathcal{A}_2(p)\) also render their asymptotic behavior as \(p \to \infty,\) claimed in the statement of the theorem.
\par
This concludes the analysis of \(l=2\) since the merges of length \(2p-2\) that do not belong to \(\mathcal{C}(p-1)\) are negligible via Lemma~\ref{bigcycles1}: this corresponds to the second term in the upper bound, and since in this situation \(\max_{\mathcal{L}'}{V(X,\mathcal{L}'_{0})} \leq \mathbb{E}[a^4_{11}]\) as there are only two paths, these cycles yield overall \(O(p^{9}n^{-2\delta_1}),\) after normalizing by \(\frac{n^{-(p-1)}}{c(p,\mathbb{E}[a^4_{11}])}.\)

\subsection{Patched Paths}\label{treigen2}

Consider \(l>2\) in (\ref{moments}), and suppose again without loss of generality that \(i=1,\) similarly to (\ref{startingpoint}):
\begin{equation}\label{momi}
    \mathbb{E}[(e_1^TA^{p}e_1-\frac{1}{n}\mathbb{E}[tr(A^{p})])^l]=n^{-pl/2}\sum_{(\mathbf{i}_1,\mathbf{i}_2, \hspace{0.05cm} ... \hspace{0.05cm},\mathbf{i}_l)}{\mathbb{E}[(a_{\mathbf{i}_1}-\mathbb{E}[a_{\mathbf{i}_1}]) \cdot (a_{\mathbf{i}_2}-\mathbb{E}[a_{\mathbf{i}_2}]) \cdot ... \cdot (a_{\mathbf{i}_l}-\mathbb{E}[a_{\mathbf{i}_l}])]},
\end{equation}
where \(\mathbf{i}_1,\mathbf{i}_2, \hspace{0.05cm} ... \hspace{0.05cm},\mathbf{i}_l \in \mathcal{S}(p,1).\) For each summand, construct a simple undirected graph \(\mathcal{G}\) with vertices \((1,\mathbf{i}_1),(2,\mathbf{i}_2), \hspace{0.05cm} ... \hspace{0.05cm},(l,\mathbf{i}_l),\) and \((a,\mathbf{i}_a)(b,\mathbf{i}_b) \in E(\mathcal{G})\) if only if \(\mathbf{i}_a, \mathbf{i}_b\) share an edge: the nonzero contributions in (\ref{momi}) are from tuples for which all connected components of \(\mathcal{G}\) have size at least two (else the expectation vanishes by independence). The key observation in \cite{sinaisosh} is that solely graphs containing \(l/2\) components are first-order terms in (\ref{momi}): each must be of size \(2,\) and this generates \((l-1)!!,\) the number of partitions of \(\{1,2, \hspace{0.05cm} ... \hspace{0.05cm}, l\}\) in \(l/2\) unordered pairs. Given the additional factor of \(n^{1/2}\) in the normalization ((\ref{cltsinaisosh}) does not contain any additional power of \(n\)), the gluing from \cite{sinaisosh} is not tight enough: Lemma~\ref{stilldom} below treats a more general case that allows controlling the moments of both \(e_i^TA^pe_i\) and \(e_i^TA^pe_j.\) There are several reasons for considering a wider class of configurations than those underlying (\ref{momi}): the moments of \(e_i^TA^pe_j\) are also needed, induction is more amenable in this enlarged universe than it is in the original one, and this generalization can be adapted to handle mixed moments of off-diagonal entries, i.e., \(\mathbb{E}[e_{i_1}^TA^pe_{j_1}\cdot e_{i_2}^TA^pe_{j_2} \cdot ... \cdot e_{i_L}^TA^pe_{j_L}]\) with \(i_k \ne j_k\) for all \(1 \leq k \leq L,\) 
relevant in forthcoming sections (this is the content of Lemma~\ref{bigcycles}, the tool bounding many expectations to come). Another extension, Lemma~\ref{bigcycles1}, addresses the aforementioned situation when some entries are diagonal: this latter scenario differs from the former primarily because diagonal entries can be not centered (\(\mathbb{E}[e_i^TA^{2p+1}e_i]=0,\) whereas \(\mathbb{E}[e_i^TA^{2p}e_i]=\frac{1}{n}\mathbb{E}[tr(A^{2p})]\)). Before proceeding with these results, additional terminology and notation are necessary.
\par
For \(i_1,i_2, \hspace{0.05cm} ... \hspace{0.05cm},i_L, j_1,j_2, \hspace{0.05cm} ... \hspace{0.05cm}, j_L \in \{1,2, \hspace{0.05cm} ... \hspace{0.05cm},n\},\) call the tuple of edges \((i_kj_k)_{1 \leq k \leq L}\) \textit{even} if
\begin{equation}\label{evendef}
    |\{k:1 \leq k \leq L, i_k=v\}|+|\{k:1 \leq k \leq L, j_k=v\}| \in 2\mathbb{Z}
\end{equation}
for all \(1 \leq v \leq n\) (i.e., \(v\) appears an even number of times among \(i_1,i_2, \hspace{0.05cm} ... \hspace{0.05cm},i_L, j_1,j_2, \hspace{0.05cm} ... \hspace{0.05cm}, j_L\)). The contributors to the forthcoming expectations of interest are even tuples, and to understand which dominate, another combinatorial object is needed. 
\par
For \(l_1,l_2, \hspace{0.05cm} ... \hspace{0.05cm}, l_L \in \mathbb{N}\) and \((i_kj_k)_{1 \leq k \leq L}\) an even tuple, let
\begin{equation}\label{mdef}
    \mathcal{M}(((i_kj_k,l_k))_{1 \leq k \leq L})=\begin{cases}
        \sum_{\{\mathcal{L}_0,\mathcal{L}_1, \hspace{0.05cm} ... \hspace{0.05cm},\mathcal{L}_t\}}{\prod_{1 \leq r \leq t}{\mathcal{D}(\mathcal{Q}(\mathcal{L}_r))}}, \hspace{1.8cm} \max_{1 \leq r \leq L}{l_r}>1,\\
        1, \hspace{6.7cm} l_1=l_2=...=l_L=1,
    \end{cases} 
\end{equation}
where the summation is over sets \(\{\mathcal{L}_0, \mathcal{L}_1,\mathcal{L}_2, \hspace{0.05cm} ... \hspace{0.05cm},\mathcal{L}_t\}\) with the property that for each \(q,1 \leq q \leq t,\)
\par
\((i) \hspace{0.1cm} \mathcal{L}_q=(k_{q1},k_{q2}, \hspace{0.05cm}... \hspace{0.05cm},k_{qT_q}),k_{q1}=\min_{1 \leq r \leq T_q}{k_{qr}}, k_{qr} \in \{1,2, \hspace{0.05cm} ... \hspace{0.05cm},L\}\) for \(1 \leq r \leq T_q,\) 
\par
\((ii) \hspace{0.1cm} i_{k_{qr}}j_{k_{qr}}=v_{qr}v_{q(r+1)}\) (as undirected edges) for \(1 \leq r \leq T_q,\) and a set \(\{v_{q1},v_{q2}, \hspace{0.05cm} ... \hspace{0.05cm},v_{qT_q}\} \subset \{1,2, \hspace{0.05cm} ... \hspace{0.05cm},n\}\) with \(v_{q(T_q+1)}:=v_{q1},\)
\par
\((iii) \hspace{0.1cm} \mathcal{Q}(\mathcal{L}_q)=(\{(l_{k_{q1}}+...+l_{k_{q(r-1)}},v_{qr}), 1 \leq r \leq T_q\},\sum_{1 \leq r \leq T_q}{l_{k_{qr}}})\) (the sum is \(0\) when \(r=1), \newline\)
with \(\mathcal{D}(S,q)\) the number of Dyck paths of length \(2q\) to which \(\mathbf{i} \in \mathcal{C}(q)\) with \(i_s=v_s\) for \((s,v_s) \in S\) are mapped in step \(1\) below (\ref{tuplecond}) (\(S \subset  \{0,1, \hspace{0.05cm} ... \hspace{0.05cm},2q-1\} \times \{1,2, \hspace{0.05cm} .... \hspace{0.05cm},n\}\)), and
\par
\((iv) \hspace{0.1cm} \mathcal{L}_0=\{1 \leq k \leq L: i_k=j_k\} \cup (\cup_{1 \leq k \leq L}{\mathcal{S}(i_kj_k)}),\) where \(\mathcal{S}(uv)\) is the set of the smallest \(2 \cdot l(uv)\) elements of \(\mathcal{S}'(uv)=\{1 \leq k \leq L: i_kj_k=uv,l_k=1\},l(uv):=\lfloor \frac{|\mathcal{S}'(uv)|}{2} \rfloor,\)
\par
\((v)\) for each \(k \in \{1,2,\hspace{0.05cm} ... \hspace{0.05cm},L\}-\mathcal{L}_0,\) there exist unique \(s,r\) with \(1 \leq s \leq t, 1 \leq r \leq T_s, k=k_{sr}.\)
\par
More succinctly, in the non-degenerate scenario \(\min_{1 \leq k \leq L}{l_k}>1,\) \(\mathcal{M}\) counts the configurations of Dyck paths for a collection of cycles belonging to \(\cup_{q \geq 1}{\mathcal{C}(q)},\) each formed by patching paths with endpoints \(i_k,j_k\) and of length \(l_k\) for \(1 \leq k \leq L:\) the summands in it are the weights of the leading terms in 
\[n^{-(\sum_{1 \leq k \leq L}{l_k}-L')/2}\sum_{(\mathbf{i}_k)_{1 \leq k \leq L} \in \mathcal{P}}{\mathbb{E}[\prod_{1 \leq k \leq L}{a_{\mathbf{i}_k}}]},\]
where \(L'=|\{k:1 \leq k \leq L, u_k \ne v_k\}|,\) and \(\mathcal{P}\) consists of tuples of paths 
\[((u_k,i_{k1},\hspace{0.05cm} ... \hspace{0.05cm}, i_{k(l_k-1)},v_k))_{1 \leq k \leq L}:=(\mathbf{i}_k)_{1 \leq k \leq L}, \hspace{0.1cm} i_{kh} \in \{1,2, \hspace{0.05cm} ... \hspace{0.05cm},n\}\]
(this is the content of Lemma~\ref{bigcycles}; although it consists solely of an upper bound, it can be easily shown it is tight when \(\mathcal{M}(((i_kj_k,l_k))_{1 \leq k \leq L})>1:\) see end of subsection~\ref{s1}; lastly, note that due to symmetry, all such expectations vanish unless the corresponding tuple is even). These weights arise in step \(1\) of the counting procedure leading to the change of summation in (\ref{genexp}): what justifies the definition of \(\mathcal{M}\) is the description above identity (\ref{easycatalan}), entailing any loop in an element of \(\mathcal{C}(l)\) belongs to \(\cup_{1 \leq k \leq l}{\mathcal{C}(k)}\) (if \((i_0,i_1,\hspace{0.05cm} ... \hspace{0.05cm},i_{2l-1},i_0) \in \mathcal{C}(l)\) and \(i_a=i_b\) for \(a<b,\) then \((i_a,i_{a+1},\hspace{0.05cm} ... \hspace{0.05cm}, i_{b}) \in \mathcal{C}(\frac{b-a}{2}):\) this occurs because \(\mathcal{C}(l)\) is invariant under shifts in \(\mathbb{Z}/2l\mathbb{Z},\) 
\begin{equation}\label{shiftt}
    (i_0,i_1,\hspace{0.05cm} ... \hspace{0.05cm},i_{2l-1},i_0) \in \mathcal{C}(l) \Leftrightarrow (i_a,i_{a+1},\hspace{0.05cm} ... \hspace{0.05cm}, i_{2l-1},i_0, i_1, \hspace{0.05cm} ... \hspace{0.05cm},i_{a-1},i_a) \in \mathcal{C}(l)),
\end{equation}
implying, for instance,
\[\mathcal{D}(S,q)=\prod_{0 \leq i \leq s}{\chi_{2|q_{i+1}-q_i}} \cdot\prod_{0 \leq i \leq s}{C_{\frac{q_{i+1}-q_{i}}{2}}},\] 
when \(S=\{(q_i,v), 1 \leq i \leq s, 0 \leq q_1<q_2<...<q_s \leq 2q-1\},q_{s+1}:=2q+q_1\) (the relevant Dyck paths return at the origin after \(q_2-q_1,q_3-q_1, \hspace{0.05cm} ... \hspace{0.05cm}, q_s-q_1\) steps).
For example, there are \(\mathcal{M}((i_1i_1,2l_1))= C_{l_1}=|\mathcal{C}(l_1)|\) elements in \(\cup_{q \geq 1}{\mathcal{C}(q)}\) of length \(2l_1,\) and \(\mathcal{M}(((i_1i_1,2l_1),(i_1i_1,2l_2)))=C_{l_1}C_{l_2}\) elements in \(\cup_{q \geq 1}{\mathcal{C}(q)}\) of length \(2l_1+2l_2\) with \(i_0=i_{2l_1}\) (i.e., concatenations of one element of \(\mathcal{C}(l_1)\) and one of \(\mathcal{C}(l_2)\)). Certain degeneracy occurs when some of lengths are \(1\) as such elements of \(\mathcal{P}\) yielding nonzero expectations exist, whereas \(\mathcal{M}\) can vanish, e.g., \(i_r=j_r,l_r=1\) (no element of \(\cup_{q \geq 1}{\mathcal{C}(q)}\) has two consecutive vertices equal), and the second branch in (\ref{mdef}) is meant to account for these cases.
\par
Having introduced even tuples and the combinatorial function \(\mathcal{M},\) proceed with justifying the claim above on leading paths. The first stage for this is Lemma~\ref{stilldom}, which deals with the case \(|\{i_k,j_k, 1 \leq k \leq L\}| \leq 2.\)

\begin{lemma}\label{stilldom}
    Suppose \(B=(b_{ij})_{1 \leq i,j \leq n} \in \mathbb{R}^{n \times n}, B=B^T, (b_{ij})_{1 \leq i \leq j \leq n}\) i.i.d. with
    \[b_{11} \overset{d}{=} -b_{11}, \hspace{0.2cm}  \mathbb{E}[b_{11}^2]=1, \hspace{0.2cm} \mathbb{E}[b_{11}^{2q}] \leq L(n)n^{\delta(2q-4)}, \hspace{0.2cm} \mathbb{P}(|b_{11}| \leq n^{\delta})=1\]
    for \(2 \leq q \leq p, \delta>0,\) and \(L:\mathbb{N} \to [1,\infty), L(n)<n^{2\delta}.\) For \(u,v \in \{1,2, \hspace{0.05cm} ... \hspace{0.05cm},n\}, u \ne v,\) an even tuple \((u_kv_k)_{1 \leq k \leq L},u_k,v_k \in \{u,v\},\) \(p,L \in \mathbb{N},p=\sum_{1 \leq k \leq L}{l_i},(l_i)_{1 \leq k \leq L} \subset \mathbb{N},\) let \(\mathcal{P}\) consist of all tuples of paths 
    \(((u_k,i_{k1},\hspace{0.05cm} ... \hspace{0.05cm}, i_{k(l_k-1)},v_k))_{1 \leq k \leq L}:=(\mathbf{i}_k)_{1 \leq k \leq L}, i_{kh} \in \{1,2, \hspace{0.05cm} ... \hspace{0.05cm},n\}.\)  Then for \(p \leq \frac{n^{\delta_1}}{2\sqrt{C\cdot L(n)}},n \geq n(\delta),\)
    \begin{equation}\label{claimmmedd}
    \sum_{(\mathbf{i}_k)_{1 \leq k \leq L} \in \mathcal{P}}{\mathbb{E}[\prod_{1 \leq k \leq L}{b_{\mathbf{i}_k}}]} \leq n^{(p-L+l)/2} \sqrt{\mathbb{E}[b^{2l_0}_{11}]}\cdot [\mathcal{M}(((u_kv_k,l_k))_{1 \leq k \leq L})+C_{\lfloor p/2 \rfloor}L(n)(Cp^2)^{2L-l}\cdot L!\cdot n^{-2\delta_1}],
    \end{equation}
    where \(\delta_1=\frac{1}{4}-\delta, l=|\{k: 1 \leq k \leq L, u_k=v_k\}|, l_0=|\{k: 1 \leq k \leq L, l_k=1\}|,\) and \(C \geq 32\) satisfies (\ref{subas}).
\end{lemma}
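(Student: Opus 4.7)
My plan is to emulate the Sinai--Soshnikov counting procedure underlying (\ref{subas}), but applied to the \emph{collection} of paths rather than a single closed cycle, and to identify the leading contributions with $\mathcal{M}$. Three preliminary moves are needed. First, length-1 paths are peeled off by Cauchy--Schwarz: since $\prod_{l_k = 1} b_{u_k v_k}$ depends only on the three independent variables $b_{uu}, b_{vv}, b_{uv},$ one gets
\[
\big|\mathbb{E}[\prod_k b_{\mathbf{i}_k}]\big| \leq \sqrt{\mathbb{E}[\prod_{l_k=1} b^{2}_{u_kv_k}]} \cdot \sqrt{\mathbb{E}[\prod_{l_k>1}b^{2}_{\mathbf{i}_k}]} \leq \sqrt{\mathbb{E}[b_{11}^{2l_0}]} \cdot \sqrt{\mathbb{E}[\prod_{l_k>1}b^{2}_{\mathbf{i}_k}]},
\]
where the last inequality uses the independence of $b_{uu},b_{vv},b_{uv}$ together with Lyapunov's inequality. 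This absorbs the $\sqrt{\mathbb{E}[b_{11}^{2l_0}]}$ factor and reduces the problem to the case $l_k \geq 2$ for all $k$. Second, the even-tuple hypothesis on $(u_k v_k)$ implies $u,v$ each occur as endpoints an even number of times, so the open paths (those with $u_k \ne v_k$) can be paired off and concatenated at their shared endpoints, producing $(L+l)/2$ closed walks based at $u$ or $v$ whose joint edge multigraph coincides with that of $\prod_k b_{\mathbf{i}_k}$.

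Next, apply the Sinai--Soshnikov counting procedure (steps 1--5 in the excerpt) to the glued collection. Every pairing fixes one interior vertex of the resulting closed walk (namely the gluing vertex), costing one power of $n$ relative to the unconstrained trace count; summing over the $(L-l)/2$ pairings and the $l$ already-closed paths yields the prefactor $n^{(p-L+l)/2}$ instead of the naive $n^{p/2}$. The dominant configurations are those in which each glued closed walk lies in $\cup_{q \geq 1}\mathcal{C}(q)$, and by the loop-decomposition preceding (\ref{easycatalan}) these are exactly parametrized by the Dyck-path data counted by $\mathcal{M}(((u_k v_k, l_k))_{1 \leq k \leq L})$, with the degenerate set $\mathcal{L}_0$ in (\ref{mdef}) absorbing loops and repeated length-1 edges that would otherwise be miscounted. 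Subdominant contributions are bounded by the error term of (\ref{subas}) applied component-wise: $C_{\lfloor p/2 \rfloor}$ controls the total Dyck count across all glued components (whose combined length is at most $p$), $L!$ accounts for the orderings of the $L$ original paths along the glued walks, and $(Cp^2)^{2L-l}$ pays for deviations at each of the $2L-l$ non-basepoint endpoints where a moment of order greater than $2$ enters, each such deviation costing a factor of $n^{-2\delta_1}$ as in (\ref{subas}).

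The main obstacle will be verifying that the leading term is captured \emph{exactly} by $\mathcal{M}$: one has to check that every arrangement of cycles in $\cup_q \mathcal{C}(q)$ compatible with the endpoint data $(u_k v_k, l_k)$ arises uniquely from a choice of Dyck paths and marked-vertex orderings in the Sinai--Soshnikov procedure, and that the length-1 paths extracted by Cauchy--Schwarz are correctly re-absorbed into $\mathcal{L}_0$ without double-counting (loops $u_k = v_k$ of length 1 collide with the non-loop convention in $\cup_q\mathcal{C}(q)$, and pairs of repeated $uv$ edges must be merged rather than counted as separate cycles). Once this identification is established, propagating the nondominant bound from (\ref{subas}) through each closed walk yields the claimed error term directly.
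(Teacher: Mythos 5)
Your outline reproduces the easy parts of the paper's argument (pairing the open paths into $(L+l)/2$ closed walks, the exponent bookkeeping giving $n^{(p-L+l)/2}$, the identification of edge-disjoint arrangements of elements of $\cup_{q\geq 1}\mathcal{C}(q)$ with $\mathcal{M}$), but the two steps that carry the actual difficulty do not work as stated. First, the Cauchy--Schwarz ``peeling'' of the length-$1$ paths is not a valid reduction. The inequality is applied for a fixed tuple of paths, and after squaring, the factor $\sqrt{\mathbb{E}[\prod_{l_k>1}b^2_{\mathbf{i}_k}]}$ no longer vanishes on configurations with odd edge multiplicities; summing it over the $\sim n^{\sum_{l_k>1}(l_k-1)}=n^{p-L}$ choices of interior vertices gives a bound of order $n^{p-L}$, which is far larger than the claimed $n^{(p-L+l)/2}$ (already for $L=2$ with one length-$1$ path $uv$ and one length-$3$ path from $v$ to $u$: the claim is $O(n)$, your bound is $O(n^2)$). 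The parity cancellation you discard is precisely what produces the stated scaling, and the length-$1$ edges cannot be decoupled from the longer paths anyway: a single leftover copy of $uv$ must be matched against a copy of $uv$ inside a longer path (this interaction is what the sets $\mathcal{S}(uv)$, $\mathcal{L}_0$ encode), so the paper glues the length-$1$ paths into the cycle structure and only invokes H\"older at the end, inside the induction, to produce $\sqrt{\mathbb{E}[b_{11}^{2l_0}]}$.

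Second, bounding the subdominant contributions ``by the error term of (\ref{subas}) applied component-wise'' presupposes that the glued closed walks are pairwise edge-disjoint; when they are not --- which is exactly the regime generating the error term --- the expectations do not factorize and (\ref{subas}) is not applicable per component. The paper's proof is an induction on $p$: if two components share an edge $xy$, delete two of its copies, merge the two paths containing them, track how $p-L+l$ and $2L-l$ move in each of the cases (i)--(iv), pay a factor $4p^2$ to reconstruct the original configuration from the clipped one, and treat separately the subcase $(*)$ in which the clipped configuration is itself maximal (subcases $(c1)$, $(c2)$); this is where the precise form $C_{\lfloor p/2\rfloor}L(n)(Cp^2)^{2L-l}L!\,n^{-2\delta_1}$ and the hypothesis $p\leq n^{\delta_1}/(4(C+1)(L(n)+1))$ are actually used to close the recursion. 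The issue you defer as the ``main obstacle'' --- that the leading term is captured exactly by $\mathcal{M}$ without double counting --- is resolved in the paper by strengthening the induction hypothesis (the main term consists precisely of configurations assembled into edge-disjoint elements of $\cup_{q\geq1}\mathcal{C}(q)$ sharing vertices only trivially), not by a direct application of the Sinai--Soshnikov count to the glued walks. Without an inductive mechanism of this kind, your proposal asserts the error bound rather than proving it.
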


\begin{proof}
    Assume \(2|p:\) otherwise the claim is immediate (all expectations vanish). \(L=1\) forces \(u_1=v_1, l=~1, \newline \mathcal{M}((u_1u_1,l_1))=C_{p/2},\) whereby the result follows from (\ref{subas}) (\(l_0=0,\) and one vertex is fixed, furnishing an additional factor of \(n^{-1}\) in step \(3\)). Suppose \(L \geq 2,\) and use induction on \(p \geq L\) to show (\ref{claimmmedd}) holds with the second term encompassing the contribution of all configurations but those 
    in which the paths with indices in
    \begin{equation}\label{sett}
        \{1,2, \hspace{0.05cm} ... \hspace{0.05cm},L\}-(\{k: 1 \leq k \leq L, i_k=j_k, l_k=1\} 
    \cup (\cup_{1 \leq k \leq L}{\mathcal{S}(i_kj_k)}))
    \end{equation}
    (see (\ref{mdef})) are assembled in a collection of edge-disjoint elements of \(\cup_{q \geq 1}{\mathcal{C}(q)},\) sharing vertices solely trivially (i.e., if \(w\) is a vertex belonging to two distinct cycles, then \(w \in \{u,v\},\) and no collection of loops at \(u'\) contains \(v'\) for \(\{u',v'\}=\{u,v\}\)). 
    \par
    If \(p=L,\) then \(l_0=p,\)
    \[\sum_{(\mathbf{i}_k)_{1 \leq k \leq L} \in \mathcal{P}}{\mathbb{E}[\prod_{1 \leq k \leq L}{b_{\mathbf{i}_k}}]}=\mathbb{E}[\prod_{1 \leq k \leq p}{b_{u_kv_k}}],\]
    yielding together with Hölder's inequality (\ref{claimmmedd}): 
    \begin{equation}\label{holderr}
        \mathbb{E}[\prod_{1 \leq k \leq p}{b_{u_kv_k}}] \leq \sqrt{\mathbb{E}[\prod_{1 \leq k \leq p}{b^2_{u_kv_k}}]} \leq \sqrt{\prod_{e \in \{u_kv_k, 1 \leq k \leq p\}}{(\mathbb{E}[b^{2p}_{e}]})^{\frac{|\{k: 1 \leq k \leq p, u_kv_k=e\}|}{p}}}=\sqrt{\mathbb{E}[b^{2l_0}_{11}]};
    \end{equation}
    the second part is trivially satisfied since \(\mathcal{M}(((u_kv_k,l_k))_{1 \leq k \leq L})=\mathcal{M}(((u_kv_k,1))_{1 \leq k \leq L})=1,\) and the set in (\ref{sett}) is empty.
    \par
    Assume \(p \geq L+1.\) Glue as many paths of length \(1\) as possible in cycles (all loops are included, and at most one copy of \(uv\) is left out): denote these cycles by \(\mathcal{L}_1,\mathcal{L}_2, \hspace{0.05cm} ... \hspace{0.05cm},\mathcal{L}_{T_1},\) and take \(T_1\) minimal (i.e., no two distinct cycles share a vertex). Consider the remaining paths, and assume their indices are \(1,2, \hspace{0.05cm} ... \hspace{0.05cm},T_2;\) among them, \(2m\) have distinct endpoints since the tuple \((u_kv_k)_{1 \leq k \leq L}\) is even: without loss of generality, let \(u_kv_k=uv\) for \(1 \leq k \leq 2m,\) construct \(m\) cycles by merging the paths with indices \(2r-1,2r\) for \(1 \leq r \leq m,\) and note the rest of the paths are cycles. Hence the elements of \(\mathcal{P}\) are unions of \(T_0:=T_1+m+(T_2-2m)\) cycles, \((\mathcal{L}_i)_{1 \leq i \leq T_0};\) let
    \begin{equation}\label{d0def}
        \mathcal{D}_0=\prod_{1 \leq i \leq m}{\chi_{2|l_{2i-1}+l_{2i}}C_{\frac{l_{2i-1}+l_{2i}}{2}}} \cdot \prod_{2m <i \leq T_2}{\chi_{2|l_{i}}C_{\frac{l_i}{2}}}.
    \end{equation}
    \vspace{0.2cm}
    \par
    \textit{Case 1:} \(\mathcal{L}_{s_1}\) and \(\mathcal{L}_{s_2}\) share no edge for any \(s_1 \ne s_2, 1 \leq s_1,s_2 \leq T_0, s_1>T_1.\) 
    \par
    This entails the contributions split into a product with \(T_0-T_1+1\) factors, and an upper bound is
    \begin{equation}\label{err1}\tag{T1}
        n^{(p-L+l)/2} \sqrt{\mathbb{E}[b^{2l_0}_{11}]} \cdot \mathcal{D}_0 \cdot (1+ CL(n)p^2n^{-2\delta_1})^{L}
    \end{equation}
    from (\ref{subas}): if some indicator function in (\ref{d0def}) is \(0,\) then one factor in the expectations always vanishes as the cycles underlying it are odd, and consider next the remaining situations. The first \(T_1\) cycles yield at most \(\sqrt{\mathbb{E}[b^{2l_0}_{11}]}\) (arguing as for (\ref{holderr})), 
    while by (\ref{subas}), the remaining \(T_0-T_1\) give at most
    \[n^{\sum_{T_1<k \leq T_0}{(|\mathcal{L}_k|/2-d(\mathcal{L}_k))}} \cdot \mathcal{D}_0 \cdot (1+ CL(n)p^2n^{-2\delta_1})^{T_0-T_1} \leq n^{(p-L+l)/2} \cdot \mathcal{D}_0 \cdot (1+ CL(n)p^2n^{-2\delta_1})^{L}\]
    where \(d(\mathcal{L}_k)=1\) if \(\mathcal{L}_k\) has its two set vertices (the endpoints of the initial path(s) that consists of) distinct, otherwise \(d(\mathcal{L}_k)=0,\) because in step \(3\) there is another factor of \(n^{-1} (n^{-2})\) when \(\mathcal{L}_k\) has its two fixed vertices equal (distinct), and
    \[p-L+l=\sum_{1 \leq k \leq L}{l_k\chi_{u_k=v_k}}+\sum_{1 \leq k \leq L}{(l_k-1)\chi_{u_k \ne v_k}} \geq \sum_{T_1<k \leq T_0}{(|\mathcal{L}_k|-2d(\mathcal{L}_k))}\]
    (if \(\mathcal{L}_t\) contains \(u,v,\) then both paths forming it belong to the second sum: otherwise, its component belongs to the first). 
    \vspace{0.2cm}
    \par
    \textit{Case 2:} there are \(s_1 \ne s_2, 1 \leq s_1,s_2 \leq T_0, s_1>T_1\) such that \(\mathcal{L}_{s_1}\) and \(\mathcal{L}_{s_2}\) share an edge. 
    \par
    Take \(t_1<t_2\) with \(\mathcal{L}_{t_1},\mathcal{L}_{t_2}\) sharing an edge \(e=xy,\) and if possible, choose one of them to be a loop: note the two shared copies are not both among the paths of length \(1\) in light of the construction of \((\mathcal{L}_r)_{1 \leq r \leq T_1}\). 
    The edge \(e\) belongs to two of the original paths, call them \(\rho_1,\rho_2:\) by deleting the first copy of \(e\) in both of them and gluing the resulting four paths into two cycles (one potentially empty), two edges are lost and the configuration is of the same type (i.e., a collection of paths with endpoints in \(\{u,v\}\)). 
    Consider what can occur with this merge by seeing \(p,L,l\) as functions of the configurations of interest:
    \par
    \((I)\) \(\rho_1,\rho_2\) are loops with the same endpoint, say \(u:\) \(L-l\) remains constant (\(l\) and \(L\) can only decrease simultaneously exactly when
    \(\rho_1\) or \(\rho_2\) has length \(1\));
    \par
    \((II)\) \(\rho_1,\rho_2\) are loops with different endpoints: \(L-l\) increases by at least \(1\) as \(l\) decreases by \(2,\) and \(L\) can only decrease by \(1\) (the merge is empty if and only if both \(\rho_1,\rho_2\) have length \(1,\) which cannot occur);
    \par
    \((III)\) \(\rho_1\) is a loop, say with endpoint \(u,\) and \(\rho_2\) has endpoints \(u,v:\) \(L-l\) stays constant or decreases by \(1\) (\(l\) decreases solely when one of the merges is empty, in which case \(L\) also drops by \(1;\) \(L\) cannot decrease by \(2\) since \(\rho_1\) or \(\rho_2\) has length at least \(2\)); 
    the case \(\rho_1\) containing \(u,v,\) \(\rho_2\) being a loop is analogous; 
    \par
    \((IV)\) \(\rho_1,\rho_2\) have endpoints \(u,v:\) the new paths can either both contain \(u,v,\) or be loops, one at \(u\) and one at \(v;\) in the former case, \(L-l\) can only decrease by \(1\) (\(l\) does not change, and \(L\) can solely drop by \(1\) because \(\rho_1\) or \(\rho_2\) has length at least \(2\)), while in the latter case, denoted by \((*),\) it suffices to assume both new paths are nonempty, case in which \(L-l\) decreases by \(2\) (one path being empty is equivalent to \(\rho_1\) or \(\rho_2\) having length \(1,\) and in this situation, both paths can be taken to have distinct endpoints; otherwise \(l\) increases by \(2\) and \(L\) stays constant, whereby \(L-l\) drops by \(2\)).
    \par
    The new paths can be assembled back to their original format in at most \(4p^2=4(\sum_{1 \leq s \leq L}{l_s})^2\) ways insomuch as for fixed \(t_1,t_2\) with \(\rho_1,\rho_2\) having lengths \(l_{s_1},l_{s_2},\) respectively, the preimage of each merge is most \(4l_{s_1}l_{s_2}\) (in \((I),\) choose two vertices \(x,y\) at distance \(l_{s_1}-1\) in the merge, a total of at most \(l_{s_1}+l_{s_2}-1 \leq l_{s_1}l_{s_2},\) in \((II),\) on each path with endpoints \(u,v,\) choose a vertex such that two of the segments have lengths adding up to \(l_{s_1},\) yielding anew at most \(l_{s_1}+l_{s_2}-1,\) and in \((III),(IV),\) proceed as in \((II)\) and select as well orientation for the paths, giving  \(4(l_{s_1}+l_{s_2}-1) \leq 4l_{s_1}l_{s_2}\)). This together with the induction hypothesis provides a bound for \((I)-(IV),\) apart from \((*),\)
    \begin{equation}\label{err2}\tag{T2}
        4p^2n^{-1/2} \cdot n^{2\delta} \cdot n^{(p-L+l)/2} \cdot C_{p/2} \cdot \sqrt{\mathbb{E}[b^{2l_0}_{11}]}\cdot [(1+(L-l-1)!! \chi_{L>l})+L(n)(Cp^2)^{2L-l}\cdot L!\cdot n^{-2\delta_1}]:
    \end{equation}
    in all situations, at least a factor of \(n^{-1/2}\) is gained (\(p\) decreases by \(2,\) and \(l-L\) increases by at most \(1\)), the number of paths of length \(1\) can be left as \(l_0\) (in \((I),(II),\) another factor of \(n^{-1/2}\) can be added and note \[n^{-1/2}\sqrt{\mathbb{E}[b^{2l_0+2}_{11}]} \leq n^{-1/2+\delta}\sqrt{\mathbb{E}[b^{2l_0}_{11}]} \leq \sqrt{\mathbb{E}[b^{2l_0}_{11}]};\]
    in \((III),\) the aforesaid saving can also be employed because \(L\) decreasing by \(1\) leads to a new path with endpoints \(u,v\) and one empty, leaving \(l_0\) unchanged; finally, in \((IV),\) \(l_0\) can increase solely when \((*)\) holds), the two erased copies of \(xy\) can be accounted for by a factor of \(n^{2\delta},\) the uniform bound
    \[\mathcal{M}(((u_kv_k,l_k))_{1 \leq k \leq L}) \leq (1+(L-l-1)!! \chi_{L>l}) \cdot C_{p/2}\]
    can be employed (solely \(\mathcal{M}(((u_kv_k,l_k))_{1 \leq k \leq L})>1\) must be justified; when
    \[L-l=|\{k: 1 \leq k \leq L, u_k \ne v_k, l_k=1\}|,\] 
    the claim ensues because 
    \[\mathcal{M}(((u_kv_k,l_k))_{1 \leq k \leq L}) \leq \prod_{1 \leq k \leq L: l_k>1}{\chi_{2|l_k}C_{l_k/2}} \leq C_{p/2};\]
    otherwise the summands \(\mathcal{L}\) contributing to \(\mathcal{M}\) have the tuples
    \((\mathcal{L}_r)_{1 \leq r \leq t}\) of length \(2\) because this is the only way of merging paths with endpoints in \(\{u,v\}\) into elements of \(\cup_{q \geq 1}{\mathcal{C}(q)},\) given that any loop in the former belongs to \(\cup_{q \geq 1}{\mathcal{C}(q)}\) too; there are thus at most \((L-l-1)!!\) possibilities for \(\{\mathcal{L}_r,1 \leq r \leq t\},\) and each summand in (\ref{mdef}) is at most
    \begin{equation}\label{prodcat}
        \max_{(q,(p_j)_{1 \leq j \leq q}): \sum_{1 \leq j \leq q}{p_j}=p}{\prod_{1 \leq j \leq t}{C_{p_j/2}}} \leq C_{p/2}
    \end{equation}
    employing \(\mathcal{C}(a_1) \times \mathcal{C}(a_2) \times ... \times \mathcal{C}(a_k) \subset \mathcal{C}(\sum_{1 \leq r \leq k}{a_r})\)), and lastly, when \(2L-l\) increases, then it does so by at most \(2\) in \((II),\) in which case there is an additional factor of \(n^{-1}\) because \(p-L+l\) decreases by at least \(3\) from \(l-L=-(2L-l)+L\)
    (\(l\) can decrease in \((III)\) as well: however, it drops by \(1\) only when the copies of \(xy\) are incident with the endpoints of \(\rho_1,\rho_2,\) reducing \(L\) by \(1\) as well), and \((Cp^2)^2 \le C^2n^{4\delta_1} \leq n\) for \(n \geq n(\delta).\)
    \par
    Finally, take \((*):\) the induction hypothesis for the new configuration \(\rho\) yields the bound
    \begin{equation}\label{err3}\tag{T3}
        4p^2 \cdot n^{(p-L+l)/2} \cdot \sqrt{\mathbb{E}[b^{2l_0}_{11}]} \cdot  C_{p/2}L(n)\cdot(Cp^2)^{2L-l-2}\cdot L! \cdot n^{-2\delta_1}
    \end{equation}
    when \(\rho\) does not fall within the exceptional set described in the second part of the induction hypothesis as \(2L-l\) decreases by at least \(2\) (\(l\) increases by \(2,\) and \(L\) does not change). Suppose \(\rho\) is among those maximal configurations. 
    If there are no summands in \(\mathcal{M}\) for \(\rho,\) then either 
    \par
    \((c1)\) all the loops in \(\rho\) have length \(1,\) or 
    \par
    \((c2)\) there is also no summand in \(\mathcal{M}\) for the original arrangement (\(\cup_{q \geq 1}{\mathcal{C}(q)}\) is invariant under removing loops). 
    \par
    Both
    parts of the conclusion hold under \((c1):\) the original configuration has all paths of length one, apart one of length \(2,\) \((u,x,v)\) with \(x \in \{u,v\},\) or two of length \(2,\) \((u,x,v)\) and \((u,y,v).\) In the former case, 
    \begin{equation}\label{c1p1}\tag{\textit{c1.1}}
        \sum_{(\mathbf{i}_k)_{1 \leq k \leq L} \in \mathcal{P}}{\mathbb{E}[\prod_{1 \leq k \leq L}{b_{\mathbf{i}_k}}]} \leq 2\sqrt{\mathbb{E}[b_{11}^{2l_0+2}]} \leq 2n^{\delta}\sqrt{\mathbb{E}[b_{11}^{2l_0}]} \leq n^{1/2}\sqrt{\mathbb{E}[b_{11}^{2l_0}]}
    \end{equation}
    via a rationale similar to (\ref{holderr}) after choosing \(x \in \{u,v\},\) and \(p-L+l= 1,\) while in the latter case,
    \begin{equation}\label{c1p2}\tag{\textit{c1.2}}
        \sum_{(\mathbf{i}_k)_{1 \leq k \leq L} \in \mathcal{P}}{\mathbb{E}[\prod_{1 \leq k \leq L}{b_{\mathbf{i}_k}}]} \leq \sqrt{\mathbb{E}[b_{11}^{2l_0}]} \cdot (n+4n^{4\delta})=n\sqrt{\mathbb{E}[b_{11}^{2l_0}]} \cdot (1+4n^{-4\delta_1})
    \end{equation}
    since \(x=y\) or \(\{x,y\}=\{u,v\}\) (the two paths of length \(2\) must share an edge), \(n\) accounts for selecting \(x=y,\) and notice 
     \(p-L+l=2,\) \(\mathcal{M}(((u_k,v_k,l_k))_{1 \leq k \leq L})=1\) (for the paths to form elements of \(\cup_{q \geq 1}{\mathcal{C}(q)},\) \(x,y \not \in \{u,v\},\) and thus \(x=y:\) this yields exactly one configuration as the original tuple is even). Take now \((c2):\) this can occur solely when there is \(r, 1 \leq r \leq L,\) \(u_r=v_r, 2|l_r+1, l_r>1,\) or \(|\{k: 1 \leq k \leq L, u_kv_k=uv,2|l_k+1\}|\) is odd; in both situations, all expectations vanish (\(\rho_1,\rho_2\) are not loops, and thus all paths with identical endpoints of length at least \(2\) are edge-disjoint from the rest in light of the definition of \((t_1,t_2)\)).
    \par
    Assume now \(\mathcal{M}>1\) for the paths comprising \(\rho:\) by adding back the two copies of \(xy,\) the configuration remains maximal (\((x,y,\mathcal{L}',y,x,\mathcal{L}'',x) \in \cup_{q \geq 1}{\mathcal{C}(q)}\) because \(\mathcal{L}',\mathcal{L}'' \in \cup_{q \geq 1}{\mathcal{C}(q)}\) are vertex-disjoint, and \(x \ne y\) from the second part of the induction hypothesis on the clipped configuration), entailing these terms are accounted by the first component in (\ref{claimmmedd}).
    \par
    Putting together (\ref{err1})-(\ref{err3}) completes the induction step: the final bound becomes
    \[n^{(p-L+l)/2} \cdot \sqrt{\mathbb{E}[b^{2l_0}_{11}]} \cdot [\mathcal{M}(((u_kv_k,l_k))_{1 \leq k \leq L})+C_{p/2}\cdot [(1+CL(n)p^2n^{-2\delta_1})^{L}-1]+\]
    \[4p^2n^{-2\delta_1}C_{p/2} \cdot L(n)(Cp^2)^{2L-l}\cdot L!\cdot n^{-2\delta_1}+4p^2n^{-2\delta_1}C_{p/2}(1+(L-l-1)!!\chi_{L>l})+4p^2n^{-2\delta_1}C_{p/2}(Cp^2)^{2L-l-2} \cdot L!];\]
    \(p=2\) is clear as \(p=L\) or the sum is \(\sum_{x}{\mathbb{E}[b^2_{wx}]} \leq n, p-L+l=2-1+1=2,\mathcal{M}((w,w,2))=1\) all the configurations with \(x \ne w\) are maximal, and \(x=w\) yields \(1 \leq n \cdot n^{-2\delta_1},\) and for \(p \geq 4,\) 
    \[(1+CL(n)p^2n^{-2\delta_1})^{L}-1 \leq L\cdot CL(n)p^2n^{-2\delta_1}(1+CL(n)p^2n^{-2\delta_1})^{L-1} \leq L\cdot CL(n)p^2n^{-2\delta_1}\exp(CL(n)p^2n^{-2\delta_1}(L-1))\]
    and \(L\cdot CL(n)p^2\exp(\frac{L-1}{4}) \leq L!CL(n)p^2 \leq L(n)(Cp^2)^{L}L!/4, \hspace{0.1cm} 4p^2n^{-2\delta_1} \leq \frac{1}{4},\)
    \[4p^2(1+(L-l-1)!!\chi_{L>l}) \leq 8p^2L! \leq (Cp^2)^{L}L!/4, \hspace{0.4cm} 4p^2(Cp^2)^{2L-l-2} \cdot L! \leq (Cp^2)^{2L-l}L!/4\]
    (the final step uses there is no overlap among maximal configurations in \textit{Case 1} and \textit{Case 2}, respectively).
\end{proof}

The generalization of Lemma~\ref{stilldom} employed below reads as follows.  

\begin{lemma}\label{bigcycles}
    (\ref{claimmmedd}) continues to hold in the setup of Lemma~\ref{stilldom}  after dropping the condition 
    \begin{center}
        \(u_k,v_k \in \{u,v\}\) for all \(1 \leq k \leq L,\)
    \end{center}
    letting \(C \geq 64,\) and replacing \(L!\) with \((2L)!.\)
\end{lemma}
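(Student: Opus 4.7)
The plan is to follow the same induction on $p$ used in Lemma~\ref{stilldom}, modifying only the combinatorial bookkeeping to account for the fact that the $2L$ endpoints $(u_k,v_k)_{1\le k\le L}$ can now take arbitrary values in $\{1,2,\ldots,n\}$. The key observation is that the even-tuple condition still guarantees a matching structure: for every vertex $w$, the set of path-endpoints equal to $w$ has even cardinality, so that the multigraph whose edges are the indices $\{1,\ldots,L\}$ with endpoints $(u_k,v_k)$ is Eulerian and decomposes into closed walks. This is the only ingredient of Lemma~\ref{stilldom} that used the restriction $u_k,v_k\in\{u,v\}$, and it is now replaced by this more general decomposition.

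First, in the base case $p=L$ the proof is verbatim: Hölder's inequality gives $\mathbb{E}[\prod_{k}b_{u_kv_k}]\le\sqrt{\mathbb{E}[b_{11}^{2l_0}]}$, and $\mathcal{M}(((u_kv_k,1))_{1\le k\le L})=1$. For the induction step, I would begin by gluing the length-$1$ paths into the minimal number of edge-disjoint cycles; since the induced multigraph is Eulerian on arbitrary vertex sets rather than on $\{u,v\}$, the number of distinct groupings is bounded by the number of perfect pairings of the $2L$ endpoints, which is $(2L-1)!!\le (2L)!$. The remaining longer paths are then merged into cycles by matching their endpoints pairwise, again possible because of evenness; the number of such matchings is also controlled by $(2L)!$. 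After these merges the configuration is a union of $T_0$ cycles $(\mathcal{L}_i)_{1\le i\le T_0}$, and the factor $\mathcal{D}_0$ analogous to (\ref{d0def}) is still bounded by $C_{p/2}$ via the inequality $\mathcal{C}(a_1)\times\cdots\times\mathcal{C}(a_k)\subset \mathcal{C}(\sum a_r)$.

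The case split is then identical to that of Lemma~\ref{stilldom}: if $(\mathcal{L}_i)$ are pairwise edge-disjoint one invokes (\ref{subas}) factor by factor to obtain (\ref{err1}); otherwise one picks a shared edge $xy$ between two cycles, deletes one of its copies in each, and re-applies the induction hypothesis to the smaller configuration, paying $n^{2\delta}$ for the two erased edges and $4p^2$ for the re-splitting. Cases (i)--(iv) and $(*)$ translate directly, the only change being that the vertices $x,y$ are now unrestricted; what matters is still how the quantities $p$, $L$ and $l=|\{k:u_k=v_k\}|$ change under the merge, and these transitions are exactly the same because they depend only on the local structure near $xy$, not on whether the endpoints of $\rho_1,\rho_2$ lie in some fixed two-element set. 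The count $l_0=|\{k:l_k=1\}|$ is again nondecreasing after the merge-and-resplit step by the same reasoning as in Lemma~\ref{stilldom}.

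The main obstacle is the combinatorial accounting, specifically verifying that the enlarged collection of pairings is absorbed by the replacement $L!\mapsto(2L)!$. In Lemma~\ref{stilldom} the factor $L!$ arose from the $(L-l-1)!!$ choices of matching identical-endpoint paths into cycles; here the matching is over arbitrary endpoints and the corresponding count is bounded by the number of perfect matchings on $2L$ half-edges, i.e.~$(2L-1)!!$, which is dominated by $(2L)!$. Carrying this bound through the induction multiplies each error term (\ref{err2})--(\ref{err3}) by the extra combinatorial factor $(2L)!/L!$, and the closing inequalities in the induction step remain valid: the geometric decay $4p^2n^{-2\delta_1}\le 1/4$ and the polynomial growth $(Cp^2)^{2L-l}$ are unchanged, so one concludes (\ref{claimmmedd}) with the new combinatorial prefactor exactly as before.
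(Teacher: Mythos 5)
Your high-level plan matches the paper's, and you correctly pinpoint the reason the combinatorial prefactor must grow to $(2L)!$: with arbitrary endpoints, the pairings of paths into cycles are controlled by perfect matchings on the multiset of endpoints rather than by the $\leq (L-l-1)!!$ count that applied when all endpoints lay in $\{u,v\}$. Your reading of the base case and the Eulerian structure coming from the even-tuple hypothesis is also right. However, the crucial claim in your third paragraph --- that \emph{``Cases (i)--(iv) and $(*)$ translate directly, the only change being that the vertices $x,y$ are now unrestricted; [\ldots] these transitions are exactly the same because they depend only on the local structure near $xy$''} --- is false, and it hides the genuinely new content of this lemma. The enumeration (i)--(iv) is complete \emph{only} because with endpoints confined to $\{u,v\}$ a merged pair of paths can land in very few configurations. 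With arbitrary endpoints $\{w_1,w_2\}$ and $\{w_3,w_4\}$, a merge along $xy$ can produce two paths with various endpoint-identifications among $w_1,\dots,w_4$: the paper explicitly notes that ``the other possibilities apart from $(i)$--$(iv)$ can only increase $l$ by $1$, and thus generate a factor of $n^{-1/2}$,'' which you must verify separately --- and which, if not checked, leaves the induction open.

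Two further steps you silently skipped also change in this generality. First, the dichotomy Case 1 / Case 2 is redefined in the paper: Case 1 is no longer ``the $\mathcal{L}_i$ are pairwise edge-disjoint'' but ``no pair of distinct cycles shares an edge that contains a copy of a path of length $\geq 2$''; the exponent count then uses $d(\mathcal{L}_k)=q_k-1$ (with $q_k$ the number of constituent paths), which only makes sense because the cycle assembly is taken to have a maximal number of components. Second, the existence of a usable pair $(t_1,t_2)$ in Case 2 is no longer immediate: when the shared edge is a length-$1$ path $xy$ of odd multiplicity in the long-path cycles, one has to chase a sequence of odd-multiplicity edges and argue that the procedure either finds the required pair or closes into a cycle of length-$1$ paths, contradicting the maximality of $T_1$. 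Without these adjustments the induction does not close, so I would not accept the proposal as a complete proof, even though its strategic outline and the $(2L)!$ bookkeeping are correct.
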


\begin{proof}
The rationale is almost identical to the case \(\{u_k,v_k: 1 \leq k \leq L\} \subset \{u,v\},\) and solely the differences are elaborated on. Use anew induction on \(p \geq L:\) the exceptional configurations continue satisfying the properties stated in Lemma~\ref{stilldom} as well as each cycle obtained from assembling paths with indices in (\ref{sett}) contains at most two of them. 
\par
Glue the paths of length \(1\) in stages: first, all loops with a shared vertex into one cycle, second, as many pairs of undirected edges with distinct endpoints as possible, and third, as many cycles consisting of pairwise distinct undirected edges as possible; denote the outcomes by \((\mathcal{L}_r)_{1 \leq r \leq T_1}.\) The paired endpoints of the paths of length at least \(2\) (i.e., compress \((u,u_1,u_2,\hspace{0.05cm}...\hspace{0.05cm},u_k,v)\) to \(uv\)) together with the leftovers of length \(1\) form an even tuple, and induction on their number \(r\) yields they can be assembled into a collection of cycles (denote these by \((u_{k0}v_{k0})_{1 \leq k \leq r};\) if \(r=1,\) then \(u_{10}=v_{10};\) suppose \(r \geq 2;\) if \(u_{10}=v_{10},\) use the induction hypothesis on \((u_{k0}v_{k0})_{2 \leq k \leq r};\) else there is \(2 \leq k \leq r\) such that \(u_{10} \in \{u_{k0}\} \cup \{v_{k0}\};\) suppose without loss of generality that \(k=2,\) and let \((u_{00}w_1,u_{00}w_2)=(u_{01}v_{01},u_{02}v_{02});\) the induction hypothesis can be applied to \(w_1w_2,(u_{k0}v_{k0})_{3 \leq k \leq r},\) whereby the result ensues by replacing \(w_1w_2\) in its corresponding cycle with \(w_1u_{00},u_{00}w_2\)). Choose a gluing with as many components as possible and denote them by \((\mathcal{L}_r)_{T_1< r \leq T_0}:\) note all have pairwise distinct vertices (otherwise a cycle could be split into two, increasing the number of cycles by \(1\)).
\vspace{0.2cm}
\par
\textit{Case 1.2}: \(\mathcal{L}_{s_1}\) and \(\mathcal{L}_{s_2}\) share no edge for any \(s_1 \ne s_2, 1 \leq s_1,s_2 \leq T_0, s_1>T_1.\)
\par
An analogous rationale to the one for \textit{Case 1} goes through: for the exponent of \(n,\) the relevant function is \(d(\mathcal{L}_k)=q_k-1,\) where \(q_k\) is the number of paths forming \(\mathcal{L}_k\) (\(q_k\) vertices are fixed in step \(3\) because these cycles have pairwise distinct vertices), and the desired inequality is justified in the same vein. The additional claim on the maximal configurations (stated at the beginning of the proof) follows from \((q-1)-q/2=q/2-1>0\) for \(q>2,\) entailing a factor of \(n^{-1/2}\) can be added to the bound when at least one of the cycles consists of at least three paths. Inequality (\ref{mmboundfoo}) below and \(-2\delta_1 \geq -1/2\) yield the configurations that do not fall within the exceptional set generate at most \(1/4\) of the second term in the bound.
\vspace{0.2cm}
\par
\textit{Case 2.2}: there are \(s_1 \ne s_2\) with \(1 \leq s_1, s_2 \leq T_0,s_1>T_1,\) and \(\mathcal{L}_{s_1},\mathcal{L}_{s_2}\) sharing an edge.
\par
A pair \(t_1,t_2\) as described in \textit{Case 2} continues to exist. 
Proceeding with the argument in \textit{Case 2}, the other possibilities (i.e., the endpoints of \(\rho_1,\rho_2\) forming a set of size at least \(3\)) apart from \((I)-(IV)\) can only increase \(l\) by \(1,\) and thus all situations apart from \((*)\) continue to yield the \(n^{-1/2}\) factor in (\ref{err2}). It can be seen that
\begin{equation}\label{mmboundfoo}
    \mathcal{M}(((u_kv_k,l_k))_{1 \leq k \leq L}) \leq (2L-2l)! \cdot C_{p/2}:
\end{equation}
in virtue of (\ref{prodcat}), showing there are at most \((2L-2l)!\) configurations underlying \(\mathcal{M}\) suffices. These consist of merges of the \(L-l\) paths with distinct endpoints, 
and the aforesaid claim is a consequence of the following injective mapping that takes them into permutations of \(\{k_1,k_2,\hspace{0.05cm}...\hspace{0.05cm},k_{L-l},k_1+L,k_2+L,\hspace{0.05cm}...\hspace{0.05cm},k_{L-l}+L\}\) for \(\{k_1,k_2,\hspace{0.05cm}...\hspace{0.05cm},k_{L-l}\}=\{k: 1 \leq k \leq L, u_k \ne v_k\}.\) Let 
\[(\mathcal{L}_1,\mathcal{L}_2,\hspace{0.05cm}...\hspace{0.05cm},\mathcal{L}_t)\to (k_{11}+s_{11}L,k_{12}+s_{12}L,\hspace{0.05cm}...\hspace{0.05cm},k_{1m_1}+s_{1L}L,\hspace{0.05cm}...\hspace{0.05cm},k_{t1}+s_{t1}L,k_{t2}+s_{t2}L,\hspace{0.05cm}...\hspace{0.05cm},k_{tm_t}+s_{tm_t}L),\]
where for \(1 \leq j \leq t,\) 
\(\mathcal{L}_j\) is formed by pasting the paths with positions \((k_{jy})_{1 \leq y \leq m_j},\) the indices listed in the order in which they are glued, i.e., \((u^*_{k_{jy}},v^*_{k_{jy}})=(w_{jy},w_{j(y+1)}),u^*_{k_{jy}}v^*_{k_{jy}}=u_{k_{jy}}v_{k_{jy}},\) and 
\[s_{jy}=\begin{cases}
    1, \hspace{1cm} (u^*_{k_{jy}},v^*_{k_{jy}})=(u_{k_{jy}},v_{k_{jy}}),\\
    0, \hspace{1cm} (u^*_{k_{jy}},v^*_{k_{jy}})=(v_{k_{jy}},u_{k_{jy}}),
\end{cases}\]
for all \(1 \leq y \leq m_j,w_{j(m_j+1)}:=w_{j1},\)
\(k_{j1}=\min_{1 \leq y \leq m_j}{k_{jy}},\) and \(k_{j2} \leq k_{jm_j}.\) Constructing the inverse of this function proves it is injective: fix \((s_{q})_{1 \leq q \leq L-l}\) in the image. The tuple \((s_q\chi_{s_q \leq L}+(s_q-L)\chi_{s_q>L})_{1 \leq q \leq L-l}\) is a permutation \((g(q))_{1 \leq q \leq L-l}\) of \(k_1,k_2,\hspace{0.05cm}...\hspace{0.05cm},k_{L-l}\) because it
consists of pairwise distinct elements, and it naturally yields orientations for the edges in the original even tuple:
\[(u_{0q},v_{0q})=\begin{cases}
    (u_{g(q)},v_{g(q)}), \hspace{0.8cm} s_q >L,\\
    (v_{g(q)},u_{g(q)}), \hspace{0.8cm} s_q \leq L;
\end{cases}\]
what is left defining \((m_j)_{1 \leq j \leq t}.\)
Note \(m_1\) is minimal such that \(((u_{0q},v_{0q}))_{q \leq m_1}\) form a cycle (in this order); put aside these \(m_1\) edges, repeat this procedure to obtain all the lengths \(m_1,m_2,\hspace{0.05cm}...\hspace{0.05cm},m_t,\) and thus recover \((\mathcal{L}_1,\mathcal{L}_2,\hspace{0.05cm}...\hspace{0.05cm},\mathcal{L}_t)\) (the minimality employed to determine the lengths ensues from each cycle having pairwise distinct vertices: see \((ii)\) below (\ref{mdef})).
\par
Lastly, consider the case with the second part of the induction hypothesis satisfied (that is, the maximal configurations in \((*)\)): \((c1)\) (\ref{c1p1}) and (\ref{c1p2}) hold once \(2,4\) are replaced by \(2L, 4L^2,\) respectively, and \(2L \leq 2p \leq n^{\delta_1}\) can be employed; \((c2)\) is identical.
The final computations are very similar, and \(4L^2 \leq L^{2L}\) (\(p \geq L,2L-l \geq L\)) for \(L \geq 2\) can be used to conclude 
the desired claim (due to the increase of the absolute constant \(C,\) each component can be made at most \(1/8\) of the second term in the bound, which together with the extra \(1/4\) in \textit{Case 1.2} conclude the induction hypothesis).
\end{proof}

Before stating and justifying the last extension of Lemma~\ref{stilldom}, consider another version of \(\mathcal{M}.\) For \(t_0 \in \mathbb{Z}_{\geq 0},\) \(((i_kj_k,l_k))_{1 \leq k \leq L},\) and \(X\) a real-valued random variable with finite moments, let
\begin{equation}\label{mdef2}
    \mathcal{M}_X(t_0,((i_kj_k,l_k))_{1 \leq k \leq L})=\begin{cases}
        1, \hspace{8.4cm}  l_1=l_2=...=l_L=1,\\
        \sum_{\mathcal{L}}{V(X,\mathcal{L}_{0})\prod_{k \in \mathcal{L}_{01}}{s(\frac
        {l_k-1}{2})}\cdot \prod_{t \in \mathcal{L}_{02}}{\mathcal{D}_{2t(3)}(l_{t(1)},l_{t(2)})} \cdot \prod_{y \geq 1}{\mathcal{D}(\mathcal{Q}(\mathcal{L}_y))}}, \hspace{0.2cm} else,
    \end{cases} 
\end{equation}
for \(s(\cdot)\) given by (\ref{formm}), padded by zeros (\(s(t)=0\) for \(t \not \in \mathbb{Z}_{\geq 0}\)), where the summation is over \(\newline \mathcal{L}=\{\mathcal{L}_{00},\mathcal{L}_{01},\mathcal{L}_{02}, \mathcal{L}_1,\mathcal{L}_2, \hspace{0.05cm} ... \hspace{0.05cm},\mathcal{L}_t\},\)
satisfying \((i)-(iii), (v)\) (beneath (\ref{mdef})) with \(\mathcal{L}_0=\mathcal{L}_{00} \cup \mathcal{L}_{01} \cup \{a_1,b_1,a_2,b_2,\hspace{0.05cm}...\hspace{0.05cm},a_{t_1},b_{t_1}\},\)
\par
\((vi) \hspace{0.1cm} \mathcal{L}_{00}=\{k:1 \leq k \leq L, i_k=j_k,l_k=1\} \cup (\cup_{1 \leq k \leq L}{\mathcal{S}(i_kj_k)}),\) where \(\mathcal{S}(uv)\) is the set of the smallest \(2 \cdot l(uv)\) elements of \(\mathcal{S}'(uv)=\{k: 1 \leq k \leq L, i_kj_k=uv,l_k=1\},l(uv):=\lfloor \frac{|\mathcal{S}'(uv)|}{2} \rfloor,\)
\par
\((vii) \hspace{0.1cm} \mathcal{L}_{01}=\{k: 1 \leq k \leq L, i_k=j_k,l_k>1\}-\{s_1,s_2: \exists j \in \{0,1\},(s_1,s_2,j) \in \mathcal{L}_{02}\},|\mathcal{L}_{01}|+2|\mathcal{L}_{02}|=t_0,\) \(\mathcal{L}_{02}=\{(a_1,b_1,j_1),(a_2,b_2,j_2),\hspace{0.05cm}...\hspace{0.05cm},(a_{t_1},b_{t_1},j_{t_1})\},\) where for \(1 \leq r,s \leq t_1,\) 
\[a_r<b_r, \hspace{0.2cm} i_{a_r}=j_{a_r}=i_{b_r}=j_{b_r},  \hspace{0.2cm} a_r,b_r \not \in \mathcal{L}_{00}, \hspace{0.2cm} 
 j_r \in \{0,1\}, \hspace{0.2cm} |\{a_s,b_s\} \cap \{a_r,b_r\}|>0 \Leftrightarrow r=s,\]
and \(\mathcal{D}_{2j}(a,b)\) is number of Dyck paths of length \(a+b-2\) corresponding to elements of \(\cup_{q \geq 1}{\mathcal{C}(q)},\) each being the merge of two non-edge disjoint cycles \(\mathbf{i}=(v_0,v_1,\hspace{0.05cm}...\hspace{0.05cm},v_{a-1},v_0),\mathbf{j}=(v_a,v_{a+1},\hspace{0.05cm}...\hspace{0.05cm},v_{a+b-1},v_a)\) with \(v_a=v_0,\) and the shared edge \(e\) employed in the merge given by (\ref{paste1}) and (\ref{paste2}) 
satisfying 
\begin{equation}\label{jlabels}
    \begin{cases}
    e \ne v_0v, \hspace{5.2cm} j=0,\\
    \exists  0 \leq k \leq a, e=v_kv_{k+1}=v_0v, v \ne v_0, \hspace{0.7cm} j=1:
\end{cases}
\end{equation}
a rationale analogous to the one presented in Lemma~\ref{lems} yields that for \(C'>0\) universal,
\begin{equation}\label{newm}
    \mathcal{D}_2(a,b):=\sum_{0 \leq j \leq 1}{\mathcal{D}_{2j}(a,b)} \leq C' \cdot C_{(a+b-2)/2},
\end{equation}
\par
\((viii) \hspace{0.1cm} V(X,\mathcal{L}_{0})=\prod_{v \in \{i_k,j_k, 1 \leq k \leq L\}}{\mathbb{E}[X^{|V_{2}(v)|+|V_{11}(v)|}] \cdot (\mathbb{E}[X^4])^{|V_{1}(v)|}},\)
where 
\[V_1(v)=\{k: 1 \leq k \leq L, \exists (k,k',1) \in \mathcal{L}_{02}, i_{k}=v\}, \hspace{0.5cm} V_2(v)=\{k: 1 \leq k \leq L, s \in \mathcal{L}_{01}, i_k=v,2|l_k+1\},\]
\begin{equation}\label{vs}\tag{VX}
    V_{11}(v)=\{k: 1 \leq k \leq L, i_k=j_k=v,l_k=1\},
\end{equation}
and by convention, \(V(X,\mathcal{L}_{0})=1\) when \(\mathcal{L}_{0}=\emptyset.\) By a slight abuse of notation, let
\begin{equation}\label{maxx}\tag{maxV}
  \max_{\mathcal{L}'}{V(X,\mathcal{L}'_0)}:=(\mathbb{E}[X^4])^{t_0/2}\max_{(k(v))_{v \in S},\sum_{v \in S}{k(v)} \leq t_0,k(v) \leq o(v)}{\prod_{v \in S}{\mathbb{E}[X^{k(v)+|V_{11}(v)|}] \cdot (\mathbb{E}[X^4])^{-k(v)/2}}},  
\end{equation}
where \(o(v)=|\{k: 1 \leq k \leq L, i_k=j_k=v,2|l_k+1,l_k>1\}|.\) This quantity is at least \(V(X,\mathcal{L}_0)\) for any \(\mathcal{L}_0\) as constructed above, and it is more amenable to induction than the maximum over all such sets.
\par
\(\mathcal{M}_X\) differs from \(\mathcal{M}\) in one regard: it forces some loops to be non-edge disjoint from the rest (this is encompassed by \((vii)\)). These configurations are vital for the variances of the diagonal entries \(e_i^TA^pe_i,\) which depend primarily on pairs of cycles that share an edge, and this phenomenon naturally propagates to the higher moments of these random variables. Nevertheless, the core idea behind \(\mathcal{M}_X\) remains identical to that underlying \(\mathcal{M}:\) the paths are pasted in elements of \(\cup_{q \geq 1}{\mathcal{C}(q)},\) with no nontrivial vertex intersections (i.e., if two distinct cycles share a vertex, then each contains a path with it as one of its endpoints). Since a pair of edges is deleted in each merge in \((vii)\), there can be edge multiplicities larger than \(2:\) 
namely, solely loops at the endpoints can have multiplicity at least \(6,\) and additionally edges incident with the first vertex in the merge could have multiplicity \(4,\) both of these situations being tracked by \((viii).\) It must be noted that the latter case is possible solely when the lengths of the cycles are odd (both must be elements of \(\cup_{l \in \mathbb{N}}{\mathcal{C}(l)}\) with an additional loop \(v_0v_0\) attached, where \(v_0\) is their first vertex, which is shared), a consequence of the property stated above (\ref{shiftt}).
For instance,
when \(L=6,u_k=v_k=u,l_k=2p_k+1, 1 \leq k \leq 6,\) the maximal configurations include unions of six paths belonging to \(\cup_{q \geq 1}{\mathcal{C}(q)},\) each with one loop at some appearance of \(u\) and no two sharing other vertex, leading to \(uu\) having multiplicity \(6.\)

\begin{lemma}\label{bigcycles1}
    Under the assumptions in Lemma~\ref{bigcycles}, require the elements of \(\mathcal{P}\) to have at least \(t_0\) paths with \(u_k=v_k,l_k>1\) and sharing some edge with other paths (i.e., each is not edge-disjoint with the rest). Then 
    \[\sum_{(\mathbf{i}_k)_{1 \leq k \leq L} \in \mathcal{P}}{\mathbb{E}[\prod_{1 \leq k \leq L}{b_{\mathbf{i}_k}}]} \leq n^{(p-L+l)/2} \cdot n^{-t_0/2} \cdot \sqrt{\mathbb{E}[b^{2l_{0}}_{11}]} \cdot [\mathcal{M}_{b_{11}}(t_0,((u_kv_k,l_k))_{1 \leq k \leq L})+\]
    \begin{equation}\label{claimmmedd2}
    +C_{\lfloor p/2 \rfloor }L(n) \cdot (CC'p^2)^{10(2L-l)}\cdot (2L)!\cdot n^{-2\delta_1} \cdot \max_{\mathcal{L'}}{V(b_{11},\mathcal{L}'_{0})}],
    \end{equation}
    where \(C'>0\) satisfies (\ref{newm}), \(C \geq 192\sqrt{\mathbb{E}[b^4_{11}]},\) 
    and the maximum is given by (\ref{maxx}).
\end{lemma}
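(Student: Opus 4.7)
The plan is to adapt the inductive argument of Lemma~\ref{bigcycles} by treating the $t_0$ forced loop-sharings in a dedicated preliminary step, after which the remaining paths evolve under the same combinatorial rules as before. The base cases ($p = L$, or $L = 1$) are handled identically, noting that $\mathcal{M}_{b_{11}}(0, \cdot) = 1$ matches the Hölder bound, that the $(2L)!$ factor with $CC'$ dominates the $L!$ factor with $C$ in the error term, and that the bound of Lemma~\ref{bigcycles} already implies (\ref{claimmmedd2}) whenever $t_0 = 0.$

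For the inductive step, I would first pair up the $t_0$ loops of length at least 2 that are required to share an edge with other paths; the shared edges occur at the common first vertex of the two loops, so merging two such loops costs exactly one vertex choice in step 3 (the shared vertex ceases to be free), which contributes the overall $n^{-t_0/2}$ factor after $t_0/2$ pairings. Counting the Dyck-path realisations of each merged pair produces $\prod_{\{a_r, b_r\} \in \mathcal{L}_{02}} \mathcal{D}_2(l_{a_r}, l_{b_r})$ via (\ref{newm}). After this preliminary step, I would run the Case~1/Case~2 dichotomy of Lemma~\ref{bigcycles} on the surviving paths: in Case~1 (no pair of distinct cycles among the non-paired ones shares an edge coming from a path of length at least 2), applying (\ref{subas}) to each cycle and multiplying by the Dyck-path constants yields the first term on the right-hand side of (\ref{claimmmedd2}).

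In Case~2, deleting the two shared copies of an edge lowers $p$ by $2$ without affecting the number of forced loop-pairs, so the induction hypothesis applies with the same $t_0$. The replacements $L! \rightsquigarrow (2L)!$ and $C \rightsquigarrow CC'$ absorb (a) the extra flexibility in the counting that comes from having the endpoints $u_k, v_k$ range over all of $\{1, \ldots, n\}$ rather than a two-element set, exactly as in the proof of Lemma~\ref{bigcycles}, and (b) one additional Catalan constant per reinsertion that interacts with a forced loop-pair. The exceptional sub-cases analogous to $(c1)$--$(c2)$ in the proof of Lemma~\ref{stilldom} are treated identically, the bound $4L^2 \leq L^{2L}$ still being available.

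The main obstacle is the bookkeeping of the moment factor $V(b_{11}, \mathcal{L}_{02})$. When two loops with odd path-lengths are merged at a shared vertex, the resulting element of $\cup_{q\ge 1} \mathcal{C}(q)$ may carry a loop of multiplicity greater than $2$ at that vertex, and in addition the shared vertex can already be an endpoint of several loops in $\mathcal{L}_{00}$ coming from paths of length~$1$. The indicator set $S(l_{a_r}, l_{b_r})$ encodes which parity configurations produce such an extra loop, while the term $|\{1 \le k \le L: i_k=j_k=v,\, l_k=1\}|$ records the extra edges contributed by length-$1$ paths anchored at the same vertex. Verifying that the step-5 moment accounting of the Sinai--Soshnikov procedure reproduces exactly $V(b_{11}, \mathcal{L}_{02})$, summed over all admissible partitions $\mathcal{L},$ will require a careful case analysis of the edge multiplicities generated at each shared vertex, and is where the argument departs most substantially from the proof of Lemma~\ref{bigcycles}.
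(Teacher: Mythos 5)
Your high-level framework (adapt the induction of Lemmas~\ref{stilldom} and~\ref{bigcycles}, reducing to those when $t_0=0$) is the same as the paper's, but the way you propose to handle $t_0>0$ has a structural gap, and the paper's proof resolves it quite differently.

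You propose to first ``pair up the $t_0$ loops'' under the premise that ``the shared edges occur at the common first vertex of the two loops,'' and then run the untouched Case~1/Case~2 dichotomy on the residual paths. This conflates the \emph{outcome} of the analysis (in the leading-order configurations underlying $\mathcal{M}_{b_{11}}$, the loops in $\mathcal{L}_{02}$ are indeed merged in pairs at a shared vertex) with the \emph{hypothesis} (the $t_0$ paths with $u_k=v_k$, $l_k>1$ share some edge with \emph{some} other path, which need not be a loop and need not be a loop at the same vertex). In the paper's proof, Case~1 is simply impossible when $t_0>0$, and Case~2 opens a three-way split on the nature of the partner path $\rho_2$ sharing the edge $xy$ with a $t_0$-loop $\rho_1$: $(i'')$ $\rho_2$ is not a loop, $(ii'')$ $\rho_2$ is a loop at a \emph{different} vertex, $(iii'')$ $\rho_2$ is a loop at the \emph{same} vertex. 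Only $(iii'')$ corresponds to your pairing picture. The other two must be handled and contribute genuinely: you cannot pair them upfront, and treating them inside the Case~2 iteration requires tracking how $t_0$ itself changes.

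This is where your claim ``deleting the two shared copies of an edge lowers $p$ by $2$ without affecting the number of forced loop-pairs, so the induction hypothesis applies with the same $t_0$'' fails. In the paper, $t_0$ is \emph{not} preserved by the edge-deletion step: it drops by $1$ in $(i'')$, by $2$ in $(ii'')$, and in $(iii'')$ either stays or drops depending on whether the merged cycle $\rho_3$ still shares edges with the remainder. This decrement of $t_0$ is precisely what keeps the running exponent $n^{(p-L+l-t_0)/2}$ from blowing up as $p$ shrinks, and is the crux of why the induction closes. Without it, the induction hypothesis gives the wrong power of $n$. You also correctly flag that you have not verified the $V(b_{11},\mathcal{L}_{02})$ moment bookkeeping; in the paper this is not a separate ``step-5'' reconciliation but is woven into the subcase analysis of $(iii'')$, where the merged cycle $\rho_3$ is either edge-disjoint from the rest (so independence and the induction hypothesis on $L-2$ paths apply, with the Dyck factor $\mathcal{D}(l_1,l_2)$ explicitly peeled off) or not (so $t_0$ drops by $1$ and the hypothesis on $L$ paths including $\rho_3$ applies). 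Filling this in is not a matter of careful case analysis layered on top of your pairing step; it needs the full case split on $\rho_2$'s type and the $t_0$-decrement bookkeeping.
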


\begin{proof}
Proceed in the same vein as in Lemma~ \ref{bigcycles}, and for simplicity, let \(X=b_{11}:\) the second part of the induction hypothesis remains the same for the cycles with indices in \(\{1,2, \hspace{0.05cm} ... \hspace{0.05cm},L\}-\mathcal{L}_{0},\) the cycles with positions in \(\mathcal{L}_{01}\) with the loops of length \(1\) left out, and the merges of the pairs in \(\mathcal{L}_{02}.\) 
When \(t_0=0,\) Lemma~\ref{bigcycles} yields the conclusion since 
\[\mathcal{M}_{b_{11}}(0,((u_kv_k,l_k))_{1 \leq k \leq L}) \geq \mathcal{M}(((u_kv_k,l_k))_{1 \leq k \leq L},\]
using \(V(b_{11},\mathcal{L}_{02}) \in \{0\} \cup [1,\infty)\) for all \(\mathcal{L}_{02}.\)
Reasoning as for (\ref{mmboundfoo}) and employing (\ref{newm}) render
\begin{equation}\label{mmbound2}
    \mathcal{M}_X(t_0,((u_kv_k,l_k))_{1 \leq k \leq L}) \leq (1+(2L-2l)!! \chi_{L>l}) \cdot (C')^{L/2}C_{p/2} \cdot \max_{\mathcal{L}'}{V(X,\mathcal{L}'_{02})}.
\end{equation}
The case \(t_0=L=l=2\) can be justified via Lemma~\ref{stilldom}: the pairs whose merge described by (\ref{paste1}) and (\ref{paste2}), a cycle of length \(l_1+l_2-2=p-2,\) belongs to \(\mathcal{C}(p/2-1)\) yield a contribution of at most \(\mathcal{M}_{b_{11}}(2,(u_k,v_k,l_k)_{1 \leq k \leq 2})\) (\(\mathcal{L}_{00}=\emptyset\) because \(t_0=L,\) \(\mathcal{L}_{01}=\{1,2\}\) if the shared edge is \(u_1u_1,\) and else \(\mathcal{L}_{01}=\emptyset,\mathcal{L}_{01}=\{(1,2,j)\}\) with \(j\) as defined in (\ref{jlabels})).
Consider now pairs \((\mathbf{i},\mathbf{j})\) for which the merge 
is even but does not belong to \(\mathcal{C}(p/2-1).\) Let \(xy\) be the first shared edge between \(\mathbf{i}\) and \(\mathbf{j}:\) use Lemma~\ref{stilldom} on the four paths created by cutting the copy of \(xy\) in each of the cycles that was employed for the merge (i.e., \(i_{t-1}i_{t},j_{s-1}j_{s}\)), two of them being \((x,y),\) and two having endpoints \((x,y).\) Suppose first \(x \ne y:\) the configurations of interest do not contribute to the first term in (\ref{claimmmedd}) (else the merge would belong to \(\mathcal{C}(p/2-1)\)), whereby the contribution is at most
\[n^{\chi_{x\ne u_1}+\chi_{y\ne u_1}-\chi_{x \ne u_1}\chi_{y \ne u_1}} \cdot (2p)^2 \cdot n^{p/2-2} \sqrt{\mathbb{E}[b^4_{11}]} \cdot C_{p/2} L(n)(Cp^2)^{8} \cdot 4! \cdot n^{-2\delta_1} \leq \]
\[\leq \frac{1}{2}n^{p/2-1} \cdot C_{p/2} L(n)(Cp^2)^{9} \cdot n^{-2\delta_1} \leq \frac{1}{2}n^{p/2-1} \cdot C_{p/2} L(n)(Cp^2)^{9} \cdot n^{-2\delta_1} \cdot \max_{\mathcal{L}'}{V(b_{11},\mathcal{L}'_{0})}\]
because in this new context, \(x,y\) are fixed, the vertex \(u_1,\) which appears in the paths, yields a discount factor of \(n^{-\chi_{x \ne u_1}\chi_{y \ne u_1}}\) due to being already selected, \(l=0,l_0=2,L=4,\max_{\mathcal{L}'}{V(b_{11},\mathcal{L}'_{0})}\geq 1,\) and the preimages of any cycle of length \(p-2\) has size at most \(p \cdot 2p\) under the described merge together with the split into four paths (at most \(p\) possibilities for choosing the position of the first vertex, and \(2\) subsequent possible orientations). When \(x=y,\) use induction on the cycles with the two copies of \(xx\) erased: the contribution is at most
\[(2p)^2 \cdot n^{(p-2)/2}C_{p/2}L(n)(Cp^2)^{4}4!n^{-2\delta_1}+(2p)^4 \cdot n^{4\delta} \cdot n^{(p-4)/2}[C_{p/2-2}+C_{p/2-1}L(n)(Cp^2)^{4}4!n^{-2\delta_1}] \leq\]
\[\leq n^{p/2-1}C_{p/2}L(n)(Cp^2)^{5}4!n^{-2\delta_1}+(2p)^4 \cdot n^{-4\delta_1} \cdot n^{p/2-1} \cdot 2C_{p/2-1}L(n)(Cp^2)^{4}4! \leq \]
\[\leq \frac{1}{4}n^{p/2-1} C_{p/2} L(n)(Cp^2)^{9} \cdot n^{-2\delta_1}+2 \cdot \frac{1}{8}n^{p/2-1} C_{p/2} L(n)(Cp^2)^{9} \cdot n^{-2\delta_1}.\]
To see this, notice that \(xx\) either does not appear in the merge, or it appears at least twice: the first term corresponds to the former (\((2p)^2\) accounts for choosing the first vertices and the orientations for the preimages, the bound rendered anew by Lemma~\ref{stilldom}, now employed on the two paths above that are not the copies of \(xx:\) the first term in it can be left out due to the merge not belonging to \(\mathcal{C}(p/2-1)\)), and the second to the latter by applying Lemma~\ref{stilldom} to two paths with first and last vertices \(u_1,\) and two additional copies of \(xx\) left out (the contribution of the edges to the expectation is accounted by \(n^{4\delta},\) and \((2p)^4\) accounts for gluing back the four copies of \(xx\) that were left out: for such configurations, the function \(\mathcal{M}\) is always at most \(\sum_{0 \leq t \leq 3}{C_{(l_1-1-t)/2}C_{(l_2-3+t)/2}} \leq C_{(l_1+l_2-4)/2+1}=C_{p/2-1}\)). This concludes the proof when \(t_0=L=l=2.\)
\par
Suppose now \(t_0>0.\) The base case for induction does not change since \(t_0=0\) when \(L=p\) (\(\mathcal{L}_{01}=\mathcal{L}_{02}=\emptyset \) as \(\mathcal{L}_{01} \cup \{a_s,b_s, 1 \leq s \leq t_1\} \subset \{k: 1 \leq k \leq L, l_k>1\}\)). \textit{Case 1.2} is impossible (\(t_0>0\)), and in \textit{Case 2.2}, select \(\rho_1,\rho_2\) such that \(\rho_1, \rho_2\) have endpoints \((u, u), (v, w),\) respectively, and denote by \(xy\) their (first) common edge; furthermore,
\begin{itemize}
    \item if possible, take \(vw \ne uu\) or \(xy \ne uu;\)
    \item if there are paths containing two
    copies of \(xy,\) choose \(\rho_1\) or \(\rho_2\) to be one of them; 
    \item when both conditions above fail, take \(\rho_1,\rho_2\) with the number of shared edges (counting multiplicities) maximal among such pairs (i.e., there are no \(\rho'_1,\rho'_2\) with
\(\rho'_1\) having equal endpoints, and \(\rho'_1,\rho'_2\) sharing more edges than \(\rho_1,\rho_2\) do), and the sum of their lengths maximal subject to the aforesaid constraint. 
\end{itemize}
By deleting two copies of \(xy\) and gluing the four segments (with endpoints \((u,x),(u,y),(v,x),(w,y)\)) into two paths (with endpoints \((u,v), (u,w)\)), \(t_0\) decreases by at most \(2\) because all the loops underlying \(t_0,\) apart from \(\rho_1\) and \(\rho_2,\) continue to share some edge with another loop (the aforesaid deletion could reduce \(t_0\) by \(3\) when \(xy\) appears in solely one more cycle \(\rho_3\) and exactly once in \(\rho_1,\rho_2;\) this case is not possible because the multiplicity of \(xy\) is even, entailing \(\rho_3\) has at least two copies of \(xy,\) whereby so does \(\rho_1\) or \(\rho_2,\) given their definition, absurd). 
Call this new configuration of paths \(\rho',\) and see anew \(p,L,l,t_0,l_0\) as functions of the underlying configurations. 
\par
\((I') \hspace{0.2cm} v \ne w:\) 
\(t_0\) can decrease by \(1,\) and \(l-L\) does not increase (\(l\) cannot increase, and \(L\) decreases by \(1\) when \((v,w)=(u,u'),\) and \(xy=uu''\) is a terminal edge, first or last, in \(\rho_1,\) and the first edge in \(\rho_2,\) in which case the nonempty path continues to be a path, leaving \(l-L\) unchanged), whereby \(p-L+l-t_0\) decreases by at least \(1,\) and the induction hypothesis yields the claim in the same fashion as before when \(l_0\) remains constant; suppose next \(l_0\) increases: if \(L\) does not change, use the induction hypothesis and that \(p-L+l-t_0\) drops by \(2;\) else \(\rho_1=(u,u',u),\rho_2 \in \{(v,w),(w,v)\},\) entailing all the other paths containing \(uu'=vw\) must have length \(1,\) from which the induction hypothesis on all paths apart from \(\rho_1\) yields the result (the decrease in \(p-L+l-t_0\) by \(1\) can be again used to account for the two additional copies of \(uu');\)
\par
\((II') \hspace{0.2cm} v=w, v \ne u:\) \(l\) decreases by \(2,\) \(t_0\) drops by at most \(2,\) \(L\) can only decrease by \(1,\) whereby \(p-L+l-t_0\) decreases by \(1,\) the induction hypothesis with the previous rationale being effective when \(l_0\) does not increase; the case in which \(l_0\) increases can be treated as in \((I');\)
\par
\((III') \hspace{0.2cm} v=w=u:\) denote by \(\rho_3\) the merge of \(\rho_1,\rho_2\) (using either (\ref{paste1}) or (\ref{paste2})), 
and suppose \(\rho_1,\rho_2\) are the first two paths for the sake of notational simplicity. The case in which \(\rho_3\) has length \(1\) can be treated as in \((I'),\) and assume next its length is at least \(2.\)
\par
Suppose \(\rho_3\) shares no edge with the rest of the paths: when \(2|l_1+l_2+1,\) all expectations vanish by symmetry, and assume next \(2|l_1+l_2.\) If \(xy\) does not appear in the remaining paths, then the induction hypothesis and the case \(t=L=l=2\) analyzed above
can be used together with independence to obtain an upper bound of 
\[n^{(l_1+l_2)/2-1} [\mathcal{M}_{b_{11}}(2,((u_kv_k,l_k))_{1 \leq k \leq 2})+C_{(l_1+l_2)/2}(Cp^2)^{9}L(n)n^{-2\delta_1}] \cdot n^{(p-L+l-t_0)/2-(l_1+l_2)/2+1} \cdot\]
\[\cdot \sqrt{\mathbb{E}[b^{2l_{0}}_{11}]} \cdot [\mathcal{M}_{b_{11}}(t_0-2,((u_kv_k,l_k))_{3 \leq k \leq L})+C_{(p-l_1-l_2)/2}L(n)(CC'p^2)^{10(2L-l-2)}(2L-4)!n^{-2\delta_1} \cdot \max_{\mathcal{L'}}{V(X,\mathcal{L}'_{0})}],\]
this being due to \(p-L+l-t_0\) for \((u_kv_k,l_k)_{3 \leq k \leq L}\) decreasing by \(-l_1-l_2+2;\) the first-order terms remain included in \(\mathcal{M}_{b_{11}}(t_0,((u_kv_k,l_k))_{1 \leq k \leq L})\) (vertex-disjoint configurations underlying \(\mathcal{M}_{b_{11}}\) are closed under unions), while the second-order terms are at most, using (\ref{newm}) and (\ref{mmbound2}),
\[C'C_{(l_1+l_2)/2}(1+(Cp^2)^{9}L(n)n^{-2\delta_1}) \cdot C_{(p-l_1-l_2)/2}L(n)(CC'p^2)^{10(2L-l-2)}(2L-4)!n^{-2\delta_1}\cdot \max_{\mathcal{L'}}{V(X,\mathcal{L}'_{0})}+\]
\begin{equation}\label{boundxy}
    +C_{(l_1+l_2)/2} \cdot (Cp^2)^{9}L(n)n^{-2\delta_1} \cdot (1+(2L-2l)!! \chi_{L>l}) \cdot (C')^{L/2-1}C_{(p-l_1-l_2)/2} \cdot \max_{\mathcal{L}'}{V(X,\mathcal{L}'_{0})}.
\end{equation}
Else \(xy\) appears in some other path: when \(x \not \in \{u_k,v_k,1 \leq k \leq L\}\) or \(y \not \in \{u_k,v_k,1 \leq k \leq L\},\) a bound is the sum in (\ref{boundxy}) times \(n^{-1+2\delta}\) (an edge of multiplicity \(2k\) contributes \(n^{2\delta\max{(2k-4,0)}}\) to the individual bounds, and the choice of \(x\) or \(y\) is double counted). Take next \(x,y \in \{u_k,v_k,1 \leq k \leq L\}, x \ne y.\) Apply the induction hypothesis on the two paths with endpoints \(x,y\) formed by cutting out \(xy\) out of \(\rho_1,\rho_2,\) and \(((u_kv_k,l_k))_{3 \leq k \leq L}:\) since \(xy\) appears in some other path but not in \(\rho_3\) (a consequence of the definition of \(\rho_1,\rho_2\)), the two new paths have length larger than \(1,\) and thus for this new configuration \(L,l_0\) do not change, while \(l,p,t_0\) decrease by \(2,\) entailing \((p-L+l-t_0)/2\) decreases by \(1,\) which can counteract the left out copies of \(xy\) (generating once again an extra factor of \(n^{-1+2\delta}\) in (\ref{boundxy})).  Finally, suppose \(x,y \in \{u_k,v_k,1 \leq k \leq L\}, x=y.\) If \(x \ne u,\) then a factor of \(n^{-1+2\delta} \cdot 2L\) can be added to the sum in (\ref{boundxy}) due to the vertex \(x\) having solely \(2L\) possibilities in \(\rho_3\) instead of \(n.\) If \(x=u,\) then in light of the definition of \(\rho_1,\rho_2,\) any path among the original ones has at most one copy of \(uu=xy,\) with any two that do sharing no other edge apart from it and any such path having both endpoints \(u\) (else choosing a shared edge \(x'y' \ne uu\) or \(vw \ne uu\) would be possible, respectively). Denote by \(q \geq 2\) the number of paths containing \(uu:\) the induction hypothesis can be used on the remainder of the paths which have \(t_0-q\) loops of size at least \(2\) sharing an edge with the rest. For fixed positions of the \(t_1\) paths, denoted by \((n_j)_{1 \leq j \leq q},\) the exponent of \(n\) becomes 
\[(\sum_{1 \leq j \leq q}{l_{n_j}}-q)/2+
(p-\sum_{1 \leq j \leq q}{l_{n_j}}-(L-q)+(l-q)-(t_0-q))/2=(p-L+l-t_0)/2,\] 
the pairs formed with configurations underlying \(\mathcal{M}_{b_{11}}\) in each continue in this category when put together, while similarly to above the second-order terms yield via (\ref{mmbound2}) at most
\[\Pi_s
[(1+Cp^2L(n)n^{-2\delta_1})^q-1] \cdot C'^{(L-q)/2}C_{p_s/2}L(n)(CC'p^2)^{10(2L-l-q)}(2L-2q)!\cdot \max_{\mathcal{L'}}{V(X,\mathcal{L}'_{0})}+\]
\[+\Pi_s
(1+Cp^2L(n)n^{-2\delta_1})^q \cdot C_{p_s/2}L(n)(CC'p^2)^{10(2L-l-q)}(2L-2q)!n^{-2\delta_1}\cdot \max_{\mathcal{L'}}{V(X,\mathcal{L}'_{0})}.\]
for \(p_s=p-\sum_{1 \leq j \leq q}{l_{n_j}},\Pi_s=\prod_{1 \leq j \leq q}{(l_{n_j} \cdot C_{(l_{n_j}-1)/2} \cdot \chi_{2|l_{n_j}+1})}.\) 
Since
\[\prod_{1 \leq j \leq q}{l_{n_{j}}} \leq p^{q}, 
\hspace{0.5cm} q(1+Cp^2L(n)n^{-2\delta_1})^{q-1} \leq 4^q,\]
\[\sum_{q \geq 2, \{n_1,n_2,\hspace{0.05cm}...\hspace{0.05cm},n_q\} \subset \{1,2,\hspace{0.05cm}...\hspace{0.05cm},L\}}{4^qp^{-5q}C_{(l_{n_1}-1)/2}C_{(l_{n_2}-1)/2}...C_{(l_{n_q}-1)/2}C_{(p-\sum_{1 \leq j \leq q}{l_{n_j}})/2}} \leq\]
\[\leq \sum_{q \geq 2}{4^qp^{-4q}C_{(p+q)/2}} \leq C_{p/2}\sum_{q \geq 2}{p^{-4q}8^q} \leq C_{p/2} \cdot \frac{1}{2^2} \cdot 2=\frac{C_p/2}{2},\]
summing over \(\{n_1,n_2,\hspace{0.05cm}...\hspace{0.05cm},n_q\} \subset \{1,2,\hspace{0.05cm}...\hspace{0.05cm},L\}\) gives an upper bound of (amounting the second term in the last factor of (\ref{claimmmedd2}))
\[C_{p/2} \cdot Cp^2L(n)n^{-2\delta_1} \cdot C'^{L/2-1}L(n)(CC'p^2)^{10(2L-l)-4}(2L-2)!\cdot \max_{\mathcal{L'}}{V(X,\mathcal{L}'_{0})}+\]
\[+C_{p/2} \cdot L(n)(CC'p^2)^{10(2L-l)-4}(2L-2)!n^{-2\delta_1}\cdot \max_{\mathcal{L'}}{V(X,\mathcal{L}'_{0})},\]
which in turn completes the analysis in this case.
\par
Lastly, suppose \(\rho_3\) shares an edge with at least one of the remaining paths. Use the induction hypothesis on \(((u_kv_k,l_k))_{3 \leq k \leq L}\) and \(\rho_3\) (its length is \(l_1+l_2-2>1\)): \(t_0\) decreases by \(1,\) whereby so do \(p-L+l-t_0\) and \(2L-l\) (\(l,L\) decrease by \(1\)), and again a bound can be obtained from the induction hypothesis (the decrease in \(p-L+l-t_0\) can be used to account for the two deleted copies of \(xy,\) which contribute at most \(n^{2\delta}\)),
\[(2p)^2 \cdot n^{2\delta} \cdot n^{(p-L+l-t_0)/2-1/2} \cdot \sqrt{\mathbb{E}[b^{2l_0}_{11}]} \cdot [\mathcal{M}_{b_{11}}(t_0-1,((u_kv_k,l_k))_{3 \leq k \leq L+1})+\]
\[+C_{(p-2)/2}L(n)(CC'p^2)^{10(2L-l-1)}(2L-6)!n^{-2\delta_1} \cdot \max_{\mathcal{L'}}{V(X,\mathcal{L}'_{0})}]\]
as it will be shown next that the quantity (\ref{maxx}) for the new configurations is at most its counterpart for the original tuple: this together with all the previous bounds (including (\ref{mmbound2})) conclude the proof of the induction step since \(2L-l-2 \geq L-2 \geq 1\) in the aforesaid situation (\(L=2\) yields \(l=t_0=2,\) a situation already discussed). It remains to justify the claim on \(\max_{\mathcal{L'}}{V(X,\mathcal{L}'_{0})}:\) recall its definition (\ref{maxx}) along with that of \(V(\cdot,\cdot)\) above (\ref{vs}). 
Fix the tuple \(((u_kv_k,l_k))_{1 \leq k \leq L}\) and \(t_0 \geq 0:\) \(|V_{11}(v)|,o(v)\) do not change for \(v \ne \nu,\) and they can only decrease when \(v=u\) (the length of \(\rho_3\) shares parity with the sum of the lengths of \(\rho_1\) and \(\rho_2,\) and \(\rho_3\) has length larger than \(1\) with at most one of \(\rho_1,\rho_2\) has length \(1\)), entailing the desired inequality since \(t_0\) drops by \(1,\) \(\mathbb{E}[X^4] \geq (\mathbb{E}[X^2])^2=1,\) and the maximum over the tuples in (\ref{maxx}) is nonnegative. 
\end{proof}


\subsection{Higher Moments}\label{treigen22}

Return to (\ref{momi}) for \(l>2:\)
\[n^{l/2}\mathbb{E}[(e_1^TA^{p}e_1-\frac{1}{n}\mathbb{E}[tr(A^{p})])^l]=n^{-(p-1)l/2}\sum_{(\mathbf{i}_1,\mathbf{i}_2, \hspace{0.05cm} ... \hspace{0.05cm},\mathbf{i}_l)}{\mathbb{E}[(a_{\mathbf{i}_1}-\mathbb{E}[a_{\mathbf{i}_1}]) \cdot (a_{\mathbf{i}_2}-\mathbb{E}[a_{\mathbf{i}_2}]) \cdot ... \cdot (a_{\mathbf{i}_l}-\mathbb{E}[a_{\mathbf{i}_l}])]}.\]
The additional normalization for each factor is \(\sqrt{c(p,\mathbb{E}[a^4_{11}])} \geq C^{1/2}_{p-1},\) and \(\frac{1}{n}|\mathbb{E}[tr(A^p)]| \leq 2\chi_{2|p} \cdot C_{p/2},\) whereby 
\[\frac{\frac{1}{n}|\mathbb{E}[tr(A^p)]|}{\sqrt{c(p,\mathbb{E}[a^4_{11}])}} \leq \frac{2\chi_{2|p} \cdot C_{p/2}}{\sqrt{C_{p-1}}}=O(1)\]
using \(\lim_{s \to \infty}{\frac{s^{3/2}C_s}{4^s}}=\sqrt{\pi},\) a consequence of Stirling's formula; in light of the case \(l=2,\) this entails that to conclude (\ref{moments}) for \(l>2\) it suffices to show that connected components of \(\mathcal{G}\) with \(L>2\) vertices are negligible, i.e., the sum of their corresponding expectations is \(o(n^{(p-1)L/2}C^{-L/2}_{p-1})\) (see paragraph below (\ref{momi}) for the definition of the graph \(\mathcal{G}\)). 
\par
Take first \(2|p,\) and consider a connected component with \(L>1\) cycles, \(\mathbf{i}_1,\mathbf{i}_2, \hspace{0.05cm} ... \hspace{0.05cm},\mathbf{i}_L.\) Apply Lemma~\ref{bigcycles1} to 
\begin{equation}\label{bbbdef}
    B=(\mathbb{E}[a^2_{11}\chi_{|a_{11}| \leq n^{\delta}}])^{-1/2}n^{1/2}A_s=(\mathbb{E}[a^2_{11}\chi_{|a_{11}| \leq n^{\delta}}])^{-1/2}(a_{ij}\chi_{|a_{ij}|\leq n^{\delta}})_{1 \leq i,j \leq n},
\end{equation}
(\(L(n)=2C(\epsilon_0)\) suffices for \(n \geq n(\epsilon_0)\) since \(\mathbb{E}[a^2_{11}\chi_{|a_{11}| \leq n^{\delta}}] \in [1-n^{-2\delta},1]\)), and the even tuple formed by the endpoints of \(\mathbf{i}_1,\mathbf{i}_2, \hspace{0.05cm} ... \hspace{0.05cm},\mathbf{i}_L\) (i.e., \(u_k=v_k=1, 1 \leq k \leq L\)); in this situation, \(t_0=l=L,l_0=0,\) and the conclusion follows because the first order term in (\ref{claimmmedd2}) corresponds to configurations with pairs of cycles sharing no vertex apart from \(1,\) entailing connected components with \(L>2\) are negligible (the second term is negligible since \(\frac{C_{pL/2}}{C^{L/2}_{p-1}}=O(p^{3(L-1)/2}),\) and \(V(X,\mathcal{L}_{0}) \leq (\mathbb{E}[X^4])^{L/2} \leq (C(\epsilon_0))^{L/4},\) due to the observations above (\ref{claimmmedd2})).  
This concludes \(2|p.\)
\par
Some comments are in order for \(2|p+1:\) the order of magnitude of the moments in (\ref{momi}) can be still computed in the given range of \(p.\) Symmetry yields 
\[\mathbb{E}[(e_1^TA^pe_1)^l]=0\]
for \(2|l+1\) (the sum of the lengths of the paths \(\mathbf{i}_1,\mathbf{i}_2, \hspace{0.05cm} ... \hspace{0.05cm},\mathbf{i}_l\) is odd, \(pl\)). Take now the case \(2|l,\) and employ Lemma~\ref{bigcycles1} on the cycles underlying a cluster of size \(L.\) The leading configurations (i.e., the tuples behind \(\mathcal{M}_{b_{11}}\)) contribute a quantity that, after normalization, depends on all even moments of \(a_{11}.\) up to the \(l^{th}\) one. Namely, a cluster with \(L\) cycles for \(L>2,2|L\) has leading configurations given by \(L\) cycles, each an element of \(\mathcal{C}(\frac{p-1}{2})\) with a loop attached at some appearance of \(1\) in it; these give, after normalization,
\begin{equation}\label{2mcluster}
    \mathbb{E}[a^{L}_{11}\chi_{|a_{11}| \leq n^{\delta}}] \cdot \frac{(\sum_{1 \leq t \leq \frac{p-1}{2}}{tf_{\frac{p-1}{2},t+1})})^{L}}{(c(p,\mathbb{E}[a^{4}_{11}]))^{L/2}}=\mathbb{E}[a^{L}_{11}\chi_{|a_{11}| \leq n^{\delta}}] \cdot \frac{(s(\frac{p-1}{2}))^{2m}}{(c(p,\mathbb{E}[a^{4}_{11}]))^{L/2}},
\end{equation}
where the function \(s\) is defined by (\ref{formm}), and Lemma~\ref{lems} entails
this is 
\[O(\mathbb{E}[a^{L}_{11}\chi_{|a_{11}| \leq n^{\delta}}] \cdot (\frac{\tilde{C}C_{(p-1)/2}}{\sqrt{C_{p-1}}})^{L})=O(\mathbb{E}[a^{L}_{11}\chi_{|a_{11}| \leq n^{\delta}}] \cdot (\overline{C}p^{-3/4})^{L}).\]
This shows all moments of \(a_{11}\) contribute to those of \(e_1^TA^{p}e_1\) when \(2|p+1:\) as \(p \to \infty,\) the higher moments can turn negligible if \(p^{3l/4}\) grows faster than \(\mathbb{E}[a^{2l}_{11}].\) 
More specifically, Lemma~\ref{bigcycles1} entails the leading term is given by graphs whose clusters have all clusters of size \(2,\) apart from at most one, which has the structure aforementioned. Thus, when \(l\) is fixed and \(2|l,\) the first-order term of the \(l^{th}\) moment is
\[(1+o(1))\sum_{k \leq l/2}{\binom{l}{2k} (2k-1)!! \cdot \mathbb{E}[a^{l-2k}_{11}\chi_{|a_{11}| \leq n^{\delta}}]\cdot (\overline{C}p^{-3/4})^{l-2k}},\]
with the term corresponding to \(k=l/2\) dominating when condition (\ref{pcondie}) holds:
\[\sum_{k \leq l/2}{\binom{l}{2k} (2k-1)!! \cdot \mathbb{E}[a^{l-2k}_{11}\chi_{|a_{11}| \leq n^{\delta}}]\cdot (\overline{C}p^{-3/4})^{l-2k}} \leq (l-1)!!+\sum_{k<l/2}{l^{2k} M^{-(l-2k)}} \leq (l-1)!!+l^{l} \cdot M^{-1}\]
where 
\[\log{M}=-\log{(\overline{C}p^{3/4})}-\max_{1 \leq t \leq l/2}{\frac{\log{\mathbb{E}[a^{2t}_{11}\chi_{|a_{11}| \leq n^{\delta}}]}}{2t}}=\log{\frac{2}{\overline{C}}}+\frac{3}{4}(\log{p}-\max_{1 \leq t \leq l/2}{\frac{2\log{\mathbb{E}[a^{2t}_{11}\chi_{|a_{11}| \leq n^{\delta}}]}}{3t}}) \to \infty.\] 
This provides the convergence in part \((a)\) of Theorem~\ref{th4} when \(2|p+1\)  (the second term in the bound remains negligible due to the moment assumption (\ref{pcondie}) and \(p=n^{o(1)}\)).

\subsection{Off-Diagonal Entries}\label{treigen3}

The objective is justifying (\ref{moments2}) for \(i \ne j:\) suppose without loss of generality that \(i=1,j=2.\) Consider first \(l=1,\) and note that any path \(\rho\) of length \(p\) between \(1\) and \(2\) has corresponding expectation \(0,\) i.e.,
\[\mathbb{E}[a_{1i_1}a_{i_1i_2}...a_{i_{p-1}2}]=0,\]
whereby
\[\mathbb{E}[e_1^TA^pe_2]=0\]
(there is an odd number of edges in \(\rho\) incident with \(i=1\) that are not \(ii:\) the copies of the latter can be excluded, the result remaining a path between \(1\) and \(2,\) one with no copy of \(ii,\) and whose number of edges incident with \(1\) then is \(2k-1\) for \(k\) the number of times \(i\) appears among its vertices). 
\par
The case \(l=2\) follows from merging each pair of paths \(\mathbf{i},\mathbf{j}\) into a cycle of length \(2p\) with first vertex \(1,\) and \((p+1)^{st}\) vertex \(2.\) There are \(C_p-\chi_{2|p}C^2_{p/2}\) first-order terms (i.e., elements of \(\mathcal{C}(p)\)), yielding the contribution 
\[n^{p-1}(C_p-\chi_{2|p}C^2_{p/2})(1+O(\frac{p^2}{n}))\]
(two vertices are already chosen, whereby the power of \(n\) is \(n^{p+1-2}\)), and for the second order, the trace estimate (\ref{subas}) gives their contribution is
\[O(C_pn^{p-1}L(n)p^2n^{-2\delta_1}).\]
This completes the result for \(l \leq 2,\) and take next \(l>2.\) When \(2|l+1,\) the result is immediate because
\[\mathbb{E}[(e^T_1A^pe_2)^{l}]=\sum_{(\mathbf{i}_1,\mathbf{i}_2, \hspace{0.05cm} ... \hspace{0.05cm},\mathbf{i}_l)}{\mathbb{E}[a_{\mathbf{i}_1}a_{\mathbf{i}_2}...a_{\mathbf{i}_l}]}=0:\]
the same rationale as for \(l=1\) yields there exists an edge incident with \(1\) in the union of \(\mathbf{i}_1,\mathbf{i}_2, \hspace{0.05cm} ... \hspace{0.05cm},\mathbf{i}_l\) of odd multiplicity (\(\mathbf{i}_1,\mathbf{i}_2, \hspace{0.05cm} ... \hspace{0.05cm},\mathbf{i}_l\) can be pasted into one path from \(1\) to \(2).\)
\par
The remaining case is \(2|l.\) Apply Lemma~\ref{stilldom} to \(B\) given by (\ref{bbbdef}), \(u_k=1,v_k=2,l_k=p,1 \leq k \leq l,\) whereby
\[\mathbb{E}[(e^T_1A^pe_2)^{l}] \leq n^{pl/2-l/2}\cdot  [\mathcal{M}(((u_k,v_k,l_k))_{1 \leq k \leq l})+C_{pl/2}\cdot(Cpl^2)^{2l}\cdot l!\cdot n^{-2\delta_1}]\]
and \(\mathcal{M}(((u_k,v_k,l_k))_{1 \leq k \leq l})=(l-1)!! \cdot C_p^{l/2}\) since the cycles underlying the contributors in \(\mathcal{M}\) consist of \(l/2\) loops, each formed with two paths, and there are \((l-1)!!\) possibilities for forming these pairs. The second term is of smaller order than the first because
\[C_p^{-l/2}C_{pl/2}\cdot(Cpl^2)^{2l}\cdot l!\cdot n^{-2\delta_1} \leq (Cp)^{3l/2}(Cpl^2)^{2l}\cdot l!\cdot n^{-2\delta_1} \leq (Clp)^{4l}n^{-2\delta_1}\]
employing \(\lim_{p \to \infty}{\frac{p^{3/2}C_p}{4^p}}=\sqrt{\pi}.\) Conversely, this bound is tight since the arrangements yielding \(\mathcal{M}\) contribute at least
\[n^{pl/2-l/2}\cdot  (l-1)!! \cdot (C_p-\chi_{2|p}C^2_{p/2})^{l/2}(\mathbb{E}[a^2_{11}\chi_{|a_{11}| \leq n^{\delta}}])^{pl/2}\prod_{1 \leq t \leq (p-1)l/2}{(1-\frac{t}{n})}\geq \]
\[\geq n^{pl/2-l/2}\cdot  (l-1)!! \cdot (C_p-\chi_{2|p}C^2_{p/2})^{l/2}(1-n^{-2\delta})^{pl/2} \exp(-\frac{p^2l^2}{4n}).\]
This completes the proof of (\ref{moments2}), and thus of part \((b)\) in Theorem~\ref{th4}.

\section{Weak Convergence}\label{weakconvv}

This section contains the proof of Theorem~\ref{th55}, from which Corollary~\ref{cor1} and Theorem~\ref{th7} are inferred. The key towards (\ref{weakconv}) is Lemma~\ref{ll2}, quantifying \(\mu_{n,p} \approx \nu_{n,p}\) at the level of characteristic functions (subsection~\ref{s1}). Theorem~\ref{th5} builds on this and gives the mechanism behind its claim (subsection~\ref{s2}). Lastly, Corollary~\ref{cor1} and Theorem~\ref{th7} are justified (subsection~\ref{s3}), and delocalization with high probability under the Haar (probability) measure on the orthogonal group \(O(n)\) is discussed in more depth (subsection~\ref{subsechaar2}).
\par
In what follows, the subscripts of the two measures of interest are dropped for ease of notation: 
\[\mu=\mu_{n,p}, \hspace{0.5cm} \nu=\nu_{n,p},\] 
given by (\ref{munp}) and (\ref{nunp}), respectively, and \(\mathbb{E}_{\mu-\nu}[\cdot]=\mathbb{E}_{\mu}[\cdot]-\mathbb{E}_{\nu}[\cdot]\) denotes the difference between the expectations with respect to \(\mu\) and \(\nu,\) respectively.

\subsection{Characteristic Functions}\label{s1}

This subsection presents Lemma~\ref{ll2}, the backbone of Theorem~\ref{th55}. 

\begin{lemma}\label{ll2}
Let \(\theta \in \mathbb{R}^{m}.\) Under the assumptions of Theorem~\ref{th55}, there exist \(C(\epsilon_0), c_4, c_5(\epsilon_0), c_6(\epsilon_0), c_7>0\) such that if \(p \in \mathbb{N}, 2|p, ||\theta|| \leq c_4\sqrt{p}, p^{45} \leq c_5(\epsilon_0)\log{n},\) then
\begin{equation}\label{ineq1}
    |\mathbb{E}_{\mu}[e^{i\theta \cdot X}]-\mathbb{E}_{\nu}[e^{i\theta \cdot X}]| \leq (1+\sigma_0(\epsilon_0)\sum_{1 \leq i \leq n}{|\theta_{ii}|})^{C(\epsilon_0)p}\exp(c_6(\epsilon_0)p^{45}-2\delta_2(\epsilon_0)\log{n})+ \exp(-c_7p)
\end{equation}
for \(\delta(\epsilon_0)\) given by (\ref{deltadef}), \(\delta_1(\epsilon_0)=\frac{1}{4}-\delta_1(\epsilon_0), \delta_2(\epsilon_0)=\min{(\delta(\epsilon_0),\delta_1(\epsilon_0))},\sigma_0(\epsilon_0)=\sup_{p \in \mathbb{N}}{\frac{c(p,C(\epsilon_0))}{C_p}}<\infty.\)
\end{lemma}

\textit{Remark:} \(\sigma_0(\epsilon_0)<\infty\) by virtue of \(\lim_{p \to \infty}{\frac{c(p,K)}{C_p}}=\frac{\sigma(\sigma+6)}{8}\) for all \(K \in \mathbb{R}.\)

\begin{proof} 
For \(T \in \mathbb{N}, x \in \mathbb{R}, |x| \leq \frac{T}{2e},\)
\[|e^{ix}-\sum_{m \leq T}{\frac{(ix)^{m}}{m!}}| \leq \sum_{m>T}{\frac{|x|^{m}}{m!}} \leq \sum_{m>T}{\frac{|x|^{m}}{(m/e)^m}} \leq 2 \cdot (\frac{e|x|}{T})^{T+1},\]
whereby
\[|\mathbb{E}_{\mu}[e^{i\theta \cdot X}]-\mathbb{E}_{\nu}[e^{i\theta \cdot X}]| \leq |\mathbb{E}_{\mu}[\sum_{j \leq T}{\frac{(i\theta \cdot X)^{j}}{j!}}]-\mathbb{E}_{\nu}[\sum_{j \leq T}{\frac{(i\theta \cdot X)^{j}}{j!}}]|+\]
\[+2\mathbb{E}_{\mu}[(\frac{e|\theta \cdot X|}{T})^{T+1}]+2\mathbb{E}_{\nu}[(\frac{e|\theta \cdot X|}{T})^{T+1}]+\mathbb{E}_{\mu}[|e^{i\theta \cdot X}-\sum_{j \leq T}{\frac{(i\theta \cdot X)^{j}}{j!}}| \cdot \chi_{|\theta \cdot X|>\frac{T}{2e}}]+\mathbb{E}_{\nu}[|e^{i\theta \cdot X}-\sum_{j \leq T}{\frac{(i\theta \cdot X)^{j}}{j!}}|\cdot  \chi_{|\theta \cdot X|>\frac{T}{2e}}]:=\]
\begin{equation}\label{errors}
    :=\mathcal{B}_1+\mathcal{B}_2+\mathcal{B}_3+\mathcal{B}_4+\mathcal{B}_5.
\end{equation}
Begin with the expectations with respect to \(\nu:\) Cauchy-Schwarz inequality, rotational invariance for normal distributions yielding 
\[\theta \cdot X\overset{d}{=} N(0,h(\theta)), \hspace{0.5cm} h(\theta)=\sum_{1 \leq i,j \leq n}{d_{ij} \theta^2_{ij}} \in [||\theta||^2, \sigma^2_0 ||\theta||^2]\] 
where \((d_{ij})_{1 \leq i,j \leq n}\) are defined by (\ref{dij}), and \(\mathbb{E}[Z^{2t}]=(2t-1)!!\) for \(t \in \mathbb{N},Z\overset{d}{=}N(0,1)\) entail 
\begin{equation}\label{b3bound}
    \mathcal{B}_3 \leq 2 \cdot (\frac{e \sigma_0 \cdot ||\theta||}{T})^{T+1} \cdot \sqrt{(2T+1)!!} \leq 2 \cdot (\frac{e\sigma_0 \cdot ||\theta||}{T})^{T+1} \cdot (\sqrt{2T+2})^{T+1} \leq 2 \cdot (\frac{2e\sigma_0 \cdot ||\theta||}{\sqrt{T}})^{T+1};
\end{equation}
for \(M \in \mathbb{N},\) Markov inequality and \(|e^{i\theta \cdot X}| \leq 1\) give
\[\frac{\mathcal{B}_5}{2} \leq \mathbb{E}_{\nu}[\sum_{j \leq T}{\frac{(|\theta \cdot X|)^{j}}{j!}}\cdot  \chi_{|\theta \cdot X|>\frac{T}{2e}}]  \leq (\frac{2e}{T})^M \cdot \mathbb{E}_{\nu}[\sum_{j \leq T}{\frac{(|\theta \cdot X|)^{j+M}}{j!}}] \leq (\frac{2e \sigma_0 ||\theta||}{T})^M\sum_{j \leq T}{\frac{(\sigma_0||\theta||)^{j}\sqrt{(2j+2M+1)!!}}{j!}} \leq\]
\[\leq (\frac{4e\sigma_0||\theta||}{T})^M\sum_{j \leq T}{\frac{ (\sigma_0||\theta||)^{j} \cdot 2^j\sqrt{(j+M)!}}{j!}} \leq (\frac{4e\sigma_0 ||\theta||}{T})^M\sum_{j \leq T}{\frac{ (\sigma_0||\theta||)^{j} \cdot 2^j(j+M)^{(j+M)/2}}{j!}} \leq\]
\begin{equation}\label{b5eq}
    \leq (\frac{4e\sigma_0||\theta|| \sqrt{T+M}}{T})^M \sum_{j \leq T}{\frac{(\sigma_0||\theta||)^{j} \cdot 2^j(j+M)^{j/2}}{j!}} \leq (\frac{4e\sigma_0||\theta|| \sqrt{T+M}}{T})^M \cdot \exp(2\sigma_0||\theta|| \cdot \sqrt{T+M}),
\end{equation}
employing \((2t+1)!!=3 \cdot 5 \cdot ... \cdot (2t+1) \leq  4 \cdot 6 \cdot ... \cdot (2t+2)= 2^t(t+1)!.\)
\par
Consider next the mixed term, \(\mathcal{B}_1:\) 
\[\mathcal{B}_1=\mathbb{E}_{\mu-\nu}[\sum_{j \leq T}{\frac{(i\theta \cdot X)^{j}}{j!}}]=\sum_{\sum_{1 \leq u \leq v \leq n}{r_{uv}}=r \leq T}{\frac{i^r}{r!} \cdot \binom{r}{(r_{uv})_{1 \leq u \leq v \leq n}} \cdot \prod_{1 \leq u \leq v \leq n}{\theta^{r_{uv}}_{uv}} \cdot \mathbb{E}_{\mu-\nu}[\prod_{1 \leq u \leq v \leq n}{X^{r_{uv}}_{uv}}]}.\]
A consequence of Lemma~\ref{bigcycles1} is that 
\begin{equation}\label{munu}
    |\mathbb{E}_{\mu-\nu}[\prod_{1 \leq u \leq v \leq n}{X^{r_{uv}}_{uv}}]| \leq \sigma_0^{\sum_{1 \leq u \leq n}{r_{uu}}} \cdot C^{-r/2}_{p-1}C_{pr/2}[C''prn^{-2\delta}+2^r(CC'C(\epsilon_0)p^2)^{20r}(2r)!n^{-2\delta_1}].
\end{equation}
To see this, fix a tuple \((r_{uv})_{1 \leq u \leq v \leq n}\) with \(r:=\sum_{1 \leq u \leq v \leq n}{r_{uv}},\) and apply Lemma~\ref{bigcycles1} to the \(r\) paths with endpoints
\((uv)_{(u,v): r_{uv}>0},\) \(l_k=p\) for \(1 \leq k \leq r,\) \(t_0=\sum_{1 \leq u \leq n}{r_{uu}}\) (when \(t_0<\sum_{1 \leq u \leq n}{r_{uu}},\) the expectation vanishes by independence). The induction hypothesis in the proof of the lemma  entails the first term in the bound comes from configurations in which each path either belongs to \(\cup_{q \geq 1}{\mathcal{C}(q)},\) or is paired with another path that shares its endpoints, their merge belonging to \(\cup_{q \geq 1}{\mathcal{C}(q)}.\) This entails that for such paths \(\mathbf{p}_1,\mathbf{p}_2,\hspace{0.05cm}...\hspace{0.05cm},\mathbf{p}_r,\)
\[\mathbb{E}[(a_{\mathbf{p}_1}-\mathbb{E}[a_{\mathbf{p}_1}]) \cdot (a_{\mathbf{p}_2}-\mathbb{E}[a_{\mathbf{p}_2}]) \cdot ... \cdot (a_{\mathbf{p}_r}-\mathbb{E}[a_{\mathbf{p}_r}])]=0\]
when there exist \(1 \leq u_0 \leq v_0 \leq n\) with \(2|r_{u_0v_0}+1\) (there is a path \(\mathbf{p}'\) with endpoints \(u_0,v_0\) that is not paired and thus, \(a_{\mathbf{p}'}\) is independent of \((a_{\mathbf{p}})_{p\ne p'},\) yielding a vanishing expectation). Otherwise, the contributions are
\[(1+O(\frac{p^2r^2}{n}))(\mathbb{E}[a^2_{11}\chi_{|a_{11}| \leq n^{\delta}}])^{pr/2}\prod_{1 \leq u < v \leq n}{d_{uv}^{r_{uv}/2}(r_{uv}-1)!!},\]
entailing together with \(\mathbb{E}[a^2_{11}\chi_{|a_{11}| \leq n^{\delta}}] \in [1-n^{-2\delta},1]\) that these maximal configurations contribute to \(|\mathbb{E}_{\mu-\nu}|\)
\[O(\frac{p^2r^2}{n}+prn^{-2\delta})\prod_{1 \leq u < v \leq n}{d_{uv}^{r_{uv}/2}(r_{uv}-1)!!}=O(prn^{-2\delta})\prod_{1 \leq u < v \leq n}{d_{uv}^{r_{uv}/2}(r_{uv}-1)!!}.\]
For the remaining tuples of paths, Lemma~\ref{bigcycles1} gives
\[\sum_{(\mathbf{p}_1,\mathbf{p}_2,\hspace{0.05cm}...\hspace{0.05cm},\mathbf{p}_r)}{\mathbb{E}[a_{\mathbf{p}_1} \cdot a_{\mathbf{p}_2} \cdot ... \cdot a_{\mathbf{p}_r}]} \leq n^{(pr-r)/2}C_{pr/2}L(n)(CC'p^2)^{20r}(2r)!n^{-2\delta_1} \cdot (C(\epsilon_0))^r\]
since \(l_0=0,L=r,l=\sum_{1 \leq u<v \leq n}{r_{uv}},t_0=\sum_{1 \leq u \leq n}{r_{uu}},\) and \(|V_{11}(v)|=o(v)=0,\) and \(\max_{\mathcal{L}'}{V(X,\mathcal{L}'_0)} \leq (\mathbb{E}[a^4_{11}])^{r} \leq (C(\epsilon_0))^r.\) Lastly,
\[|\mathbb{E}[(a_{\mathbf{p}_1}-\mathbb{E}[a_{\mathbf{p}_1}]) \cdot (a_{\mathbf{p}_2}-\mathbb{E}[a_{\mathbf{p}_2}]) \cdot ... \cdot (a_{\mathbf{p}_r}-\mathbb{E}[a_{\mathbf{p}_r}])]| \leq 2^{r-1} \cdot \mathbb{E}[a_{\mathbf{p}_1} \cdot a_{\mathbf{p}_2} \cdot ... \cdot a_{\mathbf{p}_r}]\]
can be used to conclude (\ref{munu}): this inequality holds because
\[|\mathbb{E}[(a_{\mathbf{p}_1}-\mathbb{E}[a_{\mathbf{p}_1}]) \cdot (a_{\mathbf{p}_2}-\mathbb{E}[a_{\mathbf{p}_2}]) \cdot ... \cdot (a_{\mathbf{p}_r}-\mathbb{E}[a_{\mathbf{p}_r}])]|=|\sum_{S \subset \{1,2,\hspace{0.05cm}...\hspace{0.05cm},L\}}{(-1)^{L-|S|}\prod_{s \in S}{\mathbb{E}[a_{\mathbf{p}_s}]} \cdot \mathbb{E}[\prod_{s \not \in S}{a_{\mathbf{p}_s}}]}| \leq\]
\[\leq 2^{r-1}\max_{S \subset \{1,2,\hspace{0.05cm}...\hspace{0.05cm},L\}}{\prod_{s \in S}{\mathbb{E}[a_{\mathbf{p}_s}]} \cdot \mathbb{E}[\prod_{s \not \in S}{a_{\mathbf{p}_s}}]} \leq 2^{r-1} \cdot  \mathbb{E}[a_{\mathbf{p}_1} \cdot a_{\mathbf{p}_2} \cdot ... \cdot a_{\mathbf{p}_r}]\]
due to all expectations being nonnegative, independence, and for \(Y \geq 0, s_0=\sum_{s \in S_1}{s}+\sum_{s \in S_2}{s},\) 
\[(\prod_{s \in S_1}{\mathbb{E}[Y^{s}]}) \cdot \mathbb{E}[\prod_{s \in S_2}{Y^s}] \leq \prod_{s \in S_1}{(\mathbb{E}[Y^{s_0}])^{s/s_0}} \cdot (\mathbb{E}[Y^{s_0}])^{(\sum_{s \in S_2}{s})/s_0}=\mathbb{E}[Y^{s_0}],\]
via Hölder's inequality (\(S_1,S_2 \subset \{1,2,\hspace{0.05cm}...\hspace{0.05cm},L\}\)). Lastly, (\ref{munu}), Lemma~\ref{lemhelp}, and \(C^{-r/2}_{p-1}C_{pr/2} \leq p^{3r/4}\) yield 
\begin{equation}\label{btt}
    \mathcal{B}_1 \leq (\sum_{1 \leq i \leq n}{|\theta_{ii}|})^T \cdot n^{-2\delta_2} \exp(C'(\epsilon_0) \cdot ||\theta||^2p^{42}T^2):=\mathcal{B}(T).
\end{equation}
\par
Consider the two remaining errors in (\ref{errors}): \(\mathcal{B}_2/2\) can be absorbed by the sum of the upper bounds for \(\mathcal{B}_3\) and \(\mathcal{B}(T+1),\) upon multiplying them by \(\tilde{C}\sqrt{T}\) (choose \(2|T+1\)) since
\[e^{T+1} \cdot (\frac{|\theta \cdot X|}{T})^{T+1}= \frac{(\theta \cdot X)^{T+1}}{(T+1)!} \cdot \frac{e^{T+1} \cdot (T+1)!}{T^{T+1}} \leq \tilde{C}\sqrt{T} \cdot \frac{(\theta \cdot X)^{T+1}}{(T+1)!},\]
and similarly a bound for \(\mathcal{B}_4\) can be obtained via Markov's inequality, (\ref{btt}), and (\ref{b5eq}): 
when \(k \in \mathbb{N},\)
\[\frac{\mathcal{B}_4}{2} \leq \mathbb{E}_{\mu}[\sum_{j \leq T}{\frac{(|\theta \cdot X|)^{j}}{j!}}\cdot  \chi_{|\theta \cdot X|>\frac{T}{2e}}]  \leq (\frac{2e}{T})^{2k} \cdot \mathbb{E}_{\mu}[\sum_{j \leq T}{\frac{(|\theta \cdot X|)^{j+2k}}{j!}}] \leq (\frac{2e}{T})^{2k}\cdot (\mathbb{E}_{\nu}[\sum_{j \leq T}{\frac{(|\theta \cdot X|)^{j+2k}}{j!}}]+\mathcal{B}(T+2k)).\]
Gathering all the inequalities gives for \(T=2T_0-1,T_0,k \in \mathbb{N},M>0,\)
\[|\mathbb{E}_{\mu}[e^{i\theta \cdot X}]-\mathbb{E}_{\nu}[e^{i\theta \cdot X}]| \leq (1+2\tilde{C}T)n^{-2\delta_2}(1+\sigma_0\sum_{1 \leq i \leq n}{|\theta_{ii}|})^{T+1}\exp(C'(\epsilon_0)||\theta||^2p^{42}(T+1)^2)+4 \cdot (\frac{2e\sigma_0||\theta||}{\sqrt{T}})^{T+1}+\]
\[+(\frac{2e}{T})^{2k}\cdot (\mathbb{E}_{\nu}[\sum_{j \leq T}{\frac{(|\theta \cdot X|)^{j+2k}}{j!}}]+\mathcal{B}(T+2k))+ 2(\frac{4e\sigma_0||\theta|| \sqrt{T+M}}{T})^M \cdot \exp(2\sigma_0||\theta|| \cdot \sqrt{T+M}).\]
Since \(||\theta|| \leq c_4\sqrt{p},p^{45} \leq c_5\log{n},\) let \(T=\lfloor C''(\epsilon_0)(||\theta||^2+p) \rfloor,\) \(M=\frac{c'(\epsilon_0)T^2}{||\theta||^2+1}, k=\lfloor \frac{M}{2}\rfloor:\) this makes the terms above 
\(\exp(C'''(\epsilon_0)p^{45}-2\delta_2\log{n}), 4\exp(-c''T), 4\exp(-c'' p)+\exp(C'''(\epsilon_0)p^{45}-2\delta_2\log{n}),4\exp(-c'' p)\) and renders the conclusion of the lemma. 
\end{proof}

\begin{lemma}\label{lemhelp}
    Let \(n,k \in \mathbb{N}, m=\frac{n(n+1)}{2}, \theta \in \mathbb{R}^m=\{(x_{ij})_{1 \leq i \leq j \leq n}\},\) \(\mathcal{G}\) a directed multigraph on \(\{1,2,\hspace{0.05cm}...\hspace{0.05cm},k\}\) with edges \(((u_q,v_q))_{1 \leq q \leq r}\) belonging to \(\{(i,j):1 \leq i \leq j \leq k\},\) and \((u_qv_q)_{1 \leq q \leq r}\) an even tuple. Then
    \[\sum_{\rho \in S(\mathcal{G},n)}{\theta_{\rho(u_1)\rho(v_1)}\theta_{\rho(u_2)\rho(v_2)}...\theta_{\rho(u_r)\rho(v_r)}} \leq (\sum_{1 \leq i \leq n}{|\theta_{ii}|})^{k_0} \cdot (\sqrt{2}||\theta||)^{r-k_0},\]
    where \(S(\mathcal{G},n)=\{\rho| \rho:\{1,2,\hspace{0.05cm}...\hspace{0.05cm},k\} \to \{1,2,\hspace{0.05cm}...\hspace{0.05cm},n\}, \rho(u_j) \leq \rho(v_j), 1 \leq j \leq k\},\) and 
    \[k_0=|\{j: 1\leq j \leq r, u_j=v_j, u_j \not \in \{u_q,v_q, 1 \leq q \leq r, q \ne j\}\}|.\] 
\end{lemma}

\begin{proof}
    Suppose without loss of generality that \(\min_{1 \leq i \leq j \leq n}{\theta_{ij}} \geq 0, k_0=0\) 
    (the edges underlying \(k_0\) form an even tuple). Let \(M=n^2,\tau \in \mathbb{R}^{M}=\{(x_{ij})_{1 \leq i,j \leq n}\}\) be given by \(\tau_{ij}=\theta_{i_0j_0}\) for 
    \[i_0=\min{(i,j)}, \hspace{0.2cm} j_0=\max{(i,j)}, \hspace{0.2cm}  1 \leq i,j \leq n.\] 
    The undirected edges of \(\mathcal{G}\) can be partitioned into cycles, a property that was proved by induction on \(r\) in Lemma~\ref{bigcycles} (see the first paragraph in which the gluing of the paths is described).  
    Fix such a decomposition with cycles \(\mathcal{L}_1=(w_{1q})_{1 \leq q \leq m_1},\mathcal{L}_2=(w_{2q})_{1 \leq q \leq m_2},\hspace{0.05cm}...\hspace{0.05cm},\mathcal{L}_s=(w_{sq})_{1 \leq q \leq m_s},\) 
    and \(s\) minimal: 
    it suffices to show that for \(y \geq 2,\)
    \begin{equation}\label{taures}
        \sum_{(i_1,i_2,\hspace{0.05cm}...\hspace{0.05cm},i_y) \in \{1,2,\hspace{0.05cm}...\hspace{0.05cm},n\}^y}{\tau_{i_1i_2}\tau_{i_2i_3}...\tau_{i_{y}i_1}} \leq ||\tau||^y
    \end{equation}
    because each cycle has size at least \(2\) (\((\mathcal{L}_q)_{1 \leq q \leq s}\) are vertex disjoint by the minimality of \(s,\) and \(k_0=0\)), \(||\tau|| \leq \sqrt{2}||\theta||,\)
    \[\sum_{\rho \in S(\mathcal{G},n)}{\theta_{\rho(u_1)\rho(v_1)}\theta_{\rho(u_2)\rho(v_2)}...\theta_{\rho(u_r)\rho(v_r)}}=\sum_{\rho \in S(\mathcal{G},n)}{\tau^{q(\mathcal{G},\rho(1))}_{1}...\tau^{q(\mathcal{G},\rho(M))}_M},\]
    where for each \((u,v), u \leq v\) underlying the positions on the left-hand side, the corresponding exponents on the right-hand side for \((u,v),(v,u)\) are given by their multiplicities in the multiset \(((w_{tq},w_{t(q+1)}))_{1 \leq t \leq s,1 \leq q \leq m_t},\) respectively, where \(w_{t(m_t+1)}:=w_{t1}\) for \(1 \leq t \leq s,\) and
    \[\sum_{\rho \in S(\mathcal{G},n)}{\tau^{q(\mathcal{G},\rho(1))}_{1}...\tau^{q(\mathcal{G},\rho(M))}_M}=\prod_{1 \leq q \leq s}{\sum_{\rho \in S(\mathcal{L}_q,n)}{\tau^{q(\mathcal{L}_q,\rho(1))}_{1}...\tau^{q(\mathcal{L}_q,\rho(M))}_M}}\]
    due to \((\mathcal{L}_q)_{1 \leq q \leq s}\) being vertex-disjoint.
    \par
    Consider now (\ref{taures}): the case \(y=2\) is clear, and assume next \(y \geq 3.\) Cauchy-Schwarz inequality yields
    \begin{equation}\label{cs}
      (\sum_{(i_1,i_2,\hspace{0.05cm} ...\hspace{0.05cm},i_y)}{\tau_{i_1i_2}\tau_{i_2i_3}...\tau_{i_{y}i_1}})^2 \leq (\sum_{(i_1,i_2)}{\tau^2_{i_1i_2}})(\sum_{(i_1,i_2)}{(\sum_{(i_3,\hspace{0.05cm} ...\hspace{0.05cm},i_y)}{\tau_{i_2i_3}\tau_{i_3i_4}...\tau_{i_{y}i_1}})^2}).  
    \end{equation}
    Justifying for \(l \geq 1,1 \leq a,b \leq n,\)
    \begin{equation}\label{taures2}
        (\sum_{(i_1,\hspace{0.05cm} ...\hspace{0.05cm},i_l)}{\tau_{ai_1}\tau_{i_1i_2}...\tau_{i_{l}b}})^2 \leq ||\tau||^{2l-2}(\sum_{i}{\tau^2_{ai}})(\sum_{i}{\tau^2_{ib}})
    \end{equation}
    is enough insomuch as together with (\ref{cs}) it will give
    \[\sum_{(i_1,i_2,\hspace{0.05cm} ...\hspace{0.05cm},i_y)}{\tau_{i_1i_2}\tau_{i_2i_3}...\tau_{i_{y}i_1}} \leq \sqrt{||\tau||^2 \cdot ||\tau||^{2y-6} \sum_{(i_1,i_2)}{(\sum_{i}{\tau^2_{i_2i}})(\sum_{i}{\tau^2_{ii_1}})}}=||\tau||^{y}.\]
    The desired inequality (\ref{taures2}) 
    follows easily by induction on \(l:\) the base case \(l=1\) is a direct consequence of Cauchy-Schwarz inequality, and the induction step for \(l \geq 2\) ensues from the aforesaid inequality and the induction hypothesis:
    \[(\sum_{(i_1,\hspace{0.05cm} ...\hspace{0.05cm},i_l)}{\tau_{ai_1}\tau_{i_1i_2}...\tau_{i_{l}b}})^2 \leq (\sum_{i_1}{\tau^2_{ai_1}})(\sum_{i_1}(\sum_{(i_2,\hspace{0.05cm} ...\hspace{0.05cm},i_l)}{\tau_{i_1i_2}...\tau_{i_{l}b}})^2) \leq \]
    \[\leq (\sum_{i_1}{\tau^2_{ai_1}})\sum_{i_1}{||\tau||^{2l-4}(\sum_{j}{\tau^2_{i_1j}})(\sum_{j}{\tau^2_{jb}})}=||\tau||^{2l-2}(\sum_{i}{\tau^2_{ai}})(\sum_{i}{\tau^2_{ib}}).\]
\end{proof}

\subsection{Pointwise Errors}\label{s2}

Return to the pseudo-distance \(d_{n}:\) this subsection contains the proof of Theorem~\ref{th5}, which entails Theorem~\ref{th55}.

\begin{theorem}\label{th5}
    Suppose \(f:\mathbb{R}^m \to \mathbb{R}_{\geq 0}\) is continuous, \(supp(f) \subset B_m(R):=\{x \in \mathbb{R}^m, ||x|| \leq R\}.\)  
    Then there exist \(c_8(\epsilon_0),c_9(\epsilon_0)>0\) such that for \(p \in \mathbb{N}, p \leq c_9(\epsilon_0)(\log{n})^{1/45},\) 
    \begin{equation}\label{conclusion}
        |\int{fd\mu}-\int{fd\nu}| \leq (\frac{R^2}{c_8(\epsilon_0)mp})^{m/2}\cdot n^{p/c_8(\epsilon_0)} \cdot ||f||_{\infty}.
    \end{equation}
\end{theorem}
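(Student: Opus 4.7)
The plan is to bound $\int f\, d(\mu-\nu)$ by a Chernoff-type majorization of the support indicator combined with the Fourier representation of $e^{-\alpha\|x\|^2}$, invoking Lemma~\ref{ll2} to transport the explicit Gaussian moment generating function under $\nu$ over to $\mu$. Since $f\ge 0$ is supported in $B_m(R)$, the inequality $f(x)\le \|f\|_\infty e^{\alpha(R^2-\|x\|^2)}$ holds for every $\alpha>0$ and $x\in\mathbb{R}^m$, so that
\[
\Bigl|\int f\,d\mu-\int f\,d\nu\Bigr|\le \|f\|_\infty e^{\alpha R^2}\bigl(\mathbb{E}_\mu[e^{-\alpha\|X\|^2}]+\mathbb{E}_\nu[e^{-\alpha\|X\|^2}]\bigr).
\]
The task thus reduces to controlling the two Laplace transforms at an appropriately chosen $\alpha$.

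Under $\nu$ the transform is explicit: $\mathbb{E}_\nu[e^{-\alpha\|X\|^2}]=\prod_{i=1}^m(1+2\alpha\sigma_i^2)^{-1/2}\le (1+c\alpha)^{-m/2}$ with $c>0$ universal, because $\sigma_{ij}^2=1$ for $i\ne j$ and $\sigma_{ii}^2=c^2(p,\mathbb{E}[a_{11}^4])/C_p$ is bounded below by a positive constant uniformly in $p$ (indeed, $\sigma_{ii}^2\to 1/2$ as $p\to\infty$). For $\mu$, I would use the Gaussian-superposition identity
\[
e^{-\alpha\|x\|^2}=(4\pi\alpha)^{-m/2}\int_{\mathbb{R}^m}e^{-\|\theta\|^2/(4\alpha)}e^{i\theta\cdot x}\,d\theta
\]
to write $\mathbb{E}_\mu[e^{-\alpha\|X\|^2}]=(4\pi\alpha)^{-m/2}\int e^{-\|\theta\|^2/(4\alpha)}\hat\mu(\theta)\,d\theta$, subtract the analogous expression for $\nu$, and split at $\|\theta\|=c_7\sqrt{p}$. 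On the interior, Lemma~\ref{ll2} supplies $|\hat\mu-\hat\nu|\le G(p,\|\theta\|)$, which under the hypothesis $p^7(\epsilon_0+1)\le c_5^{-1}\log n$ is of order $e^{-c_7p}$; on the exterior, the trivial bound $|\hat\mu-\hat\nu|\le 2$ is combined with the Gaussian tail of $e^{-\|\theta\|^2/(4\alpha)}$. Restricting to $\alpha\le c_{10}\,p/m$ ensures the kernel, of effective width $\sqrt{\alpha m}$, concentrates inside $\{\|\theta\|\le c_7\sqrt{p}\}$, so the exterior contribution is negligible and consequently $\mathbb{E}_\mu[e^{-\alpha\|X\|^2}]=(1+c\alpha)^{-m/2}(1+o(1))$.

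Combining the two estimates yields $|\int f\,d\mu-\int f\,d\nu|\le 2\|f\|_\infty e^{\alpha R^2}(1+c\alpha)^{-m/2}(1+o(1))$, after which the claim follows by optimising $\alpha\in (0,c_{10}\,p/m]$: when the unconstrained optimum $\alpha^\star=m/(2R^2)$ is admissible (that is, $R^2\ge m^2/(2c_{10}p)$) it produces the Gaussian bound $(eR^2/(cm))^{m/2}$ and the claimed inequality with $c_8=c/e$, while in the complementary regime the binding choice $\alpha=c_{10}p/m$ generates the extra $p$-factor in the denominator through the balance of $e^{\alpha R^2}$ against $(1+c\alpha)^{-m/2}\approx e^{-cc_{10}p/2}$. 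The main obstacle is this two-regime matching, i.e.\ verifying that both cases yield the single expression $(R^2/(c_8 mp))^{m/2}\|f\|_\infty$ with universal constants; a secondary difficulty is showing that the error term $\exp(c_6(p^7(\epsilon_0+1)-\log n))+\exp(-c_7p)$ of Lemma~\ref{ll2}, once integrated against $(4\pi\alpha)^{-m/2}e^{-\|\theta\|^2/(4\alpha)}$ on $\{\|\theta\|\le c_7\sqrt{p}\}$, remains dominated by $(R^2/(c_8 mp))^{m/2}$, which is exactly where the restriction $p\le c_9(\log(m+1)/(\epsilon_0+1))^{1/7}$ is used.
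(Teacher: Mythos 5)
Your opening reduction is where the argument breaks. Bounding \(|\int f\,d\mu-\int f\,d\nu|\) by \(\|f\|_\infty e^{\alpha R^2}\bigl(\mathbb{E}_\mu[e^{-\alpha\|X\|^2}]+\mathbb{E}_\nu[e^{-\alpha\|X\|^2}]\bigr)\) discards exactly the cancellation the theorem is about: the claim is a bound on the \emph{difference}, and in the regime where it is actually used the two individual integrals are not small at all. By Jensen, \(e^{\alpha R^2}\mathbb{E}_\nu[e^{-\alpha\|X\|^2}]\ge e^{\alpha(R^2-\mathbb{E}_\nu\|X\|^2)}=e^{\alpha(R^2-m(1+o(1)))}\ge 1\) for every \(\alpha>0\) as soon as \(R^2\ge m(1+o(1))\); since Theorem~\ref{th55} and Corollary~\ref{cor1} apply Theorem~\ref{th5} with \(R=c_2\sqrt{mp}\) and \(p\ge 16/c_2^2\) (so \(R^2\ge 16m\)), your bound can never drop below \(\|f\|_\infty\), whereas the claimed bound is \((c_2^2/c_8)^{m/2}\|f\|_\infty\), exponentially small in \(m\). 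The same defect is visible in your own optimization: when \(\alpha^\star=m/(2R^2)\) is admissible you get \((eR^2/(cm))^{m/2}\), which is missing the factor \(p^{-m/2}\) of the statement (matching it would force \(c_8\le c/(ep)\), not a universal constant), and in the constrained regime \(\alpha=c_{10}p/m\) the exponent \(\alpha R^2-\tfrac m2\log(1+c\alpha)\approx c_{10}pR^2/m-cc_{10}p/2\) is not even negative for \(R^2\asymp mp\), let alone of order \(-m\). No choice of \(\alpha\) can repair this, because the quantity you are actually bounding, \(\int f\,d\mu+\int f\,d\nu\), is genuinely of order \(\|f\|_\infty\) there; and even for small \(R\) the constraint \(\alpha\lesssim p/m\) (needed to stay inside the window \(\|\theta\|\le c_7\sqrt p\) of Lemma~\ref{ll2}) caps your bound at \(e^{-O(p)}\), far above \((R^2/(c_8mp))^{m/2}\).

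The \(p^{-m/2}\) comes from keeping \(\mu-\nu\) inside the Fourier integral. The paper mollifies \(f\) into a smooth compactly supported \(f_\epsilon\) (Lemma~\ref{lem4}), applies Fourier inversion and Fubini to write \(\int f_\epsilon\,d(\mu-\nu)=\int \hat f_\epsilon(\theta)\,\mathbb{E}_{\mu-\nu}[e^{2\pi i\theta\cdot X}]\,d\theta\), and then uses Lemma~\ref{ll2} in the form \(G(p,\|\theta\|/2\pi)\le\exp(-c\,p\|\theta\|^2)\) on \(\|\theta\|\le 2\pi\) together with the crude bound \(|\hat f_\epsilon(\theta)|\le\|f\|_\infty R^mV(B_m(1))\approx (CR^2/m)^{m/2}\|f\|_\infty\); integrating the Gaussian of width \(p^{-1/2}\) over \(\theta\in\mathbb{R}^m\) produces the \(p^{-m/2}\), and the region \(\|\theta\|>2\pi\) is controlled by the smoothness of the mollifier via Plancherel. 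Your instinct to test the characteristic-function estimate of Lemma~\ref{ll2} against a Gaussian kernel of width \(\asymp\sqrt p\) is in the right spirit, but it has to be applied to \(\hat f_\epsilon\cdot(\hat\mu-\hat\nu)\), i.e.\ to the difference of the integrals of \(f\) itself, not to the two Laplace transforms separately after a Chernoff majorization of the support indicator.
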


\begin{proof}
Let
\[G:\mathbb{R} \to [0,\infty), \hspace{0.5cm} G(x)=ce^{-\frac{1}{x(1-x)}}\chi_{x \in (0,1)}, \hspace{0.5cm} c=(\int_{0}^{1}{e^{-\frac{1}{x(1-x)}}dx})^{-1} \in (1,\infty),\]
\[\tau:\mathbb{R}^m \to [0,\infty), \hspace{0.5cm} \tau(x)=\prod_{1 \leq i \leq m}{G(x_i)}:\]
\(G\) is smooth since by induction on \(k \geq 0,\) \(G^{(k)}(x)=ce^{-\frac{1}{x(1-x)}}P_k(x,\frac{1}{x},\frac{1}{1-x})\chi_{x \in (0,1)}\) for some \(P_k \in \mathbb{R}[X,Y,Z]\) (using \(\frac{1}{x(1-x)}=\frac{1}{x}+\frac{1}{1-x}),\) and thus \(\tau\) is smooth with \(\int_{\mathbb{R}^m}{\tau(x)dx}=1.\)
\par
Take an arbitrary \(\epsilon \in (0,1]:\) it is shown next that for some universal \(c_8>0,\)
\begin{equation}\label{epsilonconv}
    |\int{fd\mu}-\int{fd\nu}| \leq (\frac{R^2}{c_8mp})^{m/2}\cdot 3n^{p/c_8}||f||_{\infty}+\epsilon \cdot ||f||_{\infty}+2\sup_{||x-z|| \leq \epsilon\sqrt{m}}{|f(x)-f(z)|},
\end{equation}
which entails (\ref{conclusion}) since by letting \(\epsilon \to 0,\) the last term also tends to \(0:\) else, there exists \(\epsilon_0>0\) such that for all \(k \in \mathbb{N},\) there are \(x_k,z_k \in \mathbb{R}^m\) with \(||x_k||,||z_k||\leq 2R,\) \(|f(x_k)-f(z_k)| \geq \epsilon_0, ||x_k-z_k|| \leq k^{-1}\sqrt{m}\) because for \(\epsilon\leq \frac{R}{\sqrt{m}},\) 
\[\sup_{||x-z|| \leq \epsilon\sqrt{m}}{|f(x)-f(z)|}=\sup_{||x-z|| \leq \epsilon\sqrt{m},||x||,||z|| \leq 2R}{|f(x)-f(z)|};\]
two applications of Bolzano-Weierstrass theorem yield a subsequence \((k_j)_{j \in \mathbb{N}}\) with 
\[\lim_{j \to \infty}{x_{k_j}}=x, \hspace{0.5cm} \lim_{j \to \infty}{z_{k_j}}=z,\] 
and thus \(x=z\) from \(||x-z||=\lim_{j \to \infty}{||x_{k_j}-z_{k_j}||}=0;\) however, this and \(|f(x_k)-f(z_k)| \geq \epsilon_0\) contradict the continuity of \(f\) at \(x.\) 
\par
To justify (\ref{epsilonconv}), let \(\phi:\mathbb{R}^m \to [0,\infty)\) be given by
\[\phi(x)=\prod_{1 \leq i \leq m}{g(x_i)}, \hspace{0.5cm} g=g_1*g_2*...*g_l*G, \hspace{0.5cm} g_i(x)=\epsilon_1 G(x\epsilon_1),\]
for \(l=8, \epsilon_1=\epsilon_1(m,R,\epsilon)>0\) chosen subsequently, and \(h_{\epsilon}=f*\tau_{\epsilon},f_\epsilon=h_{\epsilon}*\phi_\epsilon:\) i.e., 
\begin{equation}\label{defeps}
    h_{\epsilon}(x)=\int_{\mathbb{R}^m}{f(x-y)\epsilon^{-m}\tau(\epsilon^{-1}y)dy}, \hspace{0.5cm} f_{\epsilon}(x)=\int_{\mathbb{R}^m}{h_{\epsilon}(x-y)\epsilon^{-m}\phi(\epsilon^{-1}y)dy}, \hspace{0.4cm} x \in \mathbb{R}^m.
\end{equation}
Return to the difference of interest: the triangle inequality yields
\[|\int{fd\mu}-\int{fd\nu}| \leq |\int{h_\epsilon d\mu}-\int{h_\epsilon d\nu}|+2\sup_{x \in \mathbb{R}^m}{|f(x)-h_\epsilon(x)|} \leq\]
\begin{equation}\label{c1}
    \leq |\int{f_\epsilon d\mu}-\int{f_\epsilon d\nu}|+2\sup_{x \in \mathbb{R}^m}{|h_{\epsilon}(x)-f_\epsilon(x)|}+2\sup_{x \in \mathbb{R}^m}{|f(x)-h_\epsilon(x)|}.
\end{equation}
It is shown next that the three terms in (\ref{c1}) are upper bounded by their counterparts in (\ref{epsilonconv}).
\par
Begin with the last term in (\ref{c1}): since \(\tau \geq 0, \int_{\mathbb{R}^m}{\tau(y)dy}=1, supp(\tau)=[0,1]^m,\)
\[\sup_{x \in \mathbb{R}^m}{|f(x)-h_\epsilon(x)|} \leq \sup_{x \in \mathbb{R}^m}{\int_{[0,\epsilon]^m}{|f(x)-f(x-y)|\epsilon^{-m}\tau(\epsilon^{-1}y)dy}} \leq \sup_{||x-z|| \leq \epsilon\sqrt{m}}{|f(x)-f(z)|}.\]
\par
Continue with the second term in (\ref{c1}): \(h_\epsilon\) is Lipschitz because for \(x,z \in \mathbb{R}^m,\)
\[|h_{\epsilon}(x)-h_{\epsilon}(z)| \leq \int_{\mathbb{R}^m}{f(y)\epsilon^{-m}|\tau(\epsilon^{-1}(x-y))-\tau(\epsilon^{-1}(z-y))|dy} \leq\]
\[\leq ||f||_{\infty} \cdot (\int_{y-x \in [-\epsilon,0]^m}{\epsilon^{-m}|\tau(\epsilon^{-1}(x-y))-\tau(\epsilon^{-1}(z-y))|dy}+\int_{y-z \in [-\epsilon,0]^m}{\epsilon^{-m}|\tau(\epsilon^{-1}(x-y))-\tau(\epsilon^{-1}(z-y))|dy}) \leq\]
\[\leq 2||f||_{\infty} \cdot \epsilon^{-1}c(\tau) \cdot ||x-z||, \hspace{0.5cm} c(\tau)=\sup_{x \ne y}{\frac{|\tau(x)-\tau(y)|}{||x-y||}} \in (0,\infty).\]
This property and \(\phi \geq 0,\int_{\mathbb{R}^m}{\phi(y)dy}=1,\) a consequence of \(g_1,g_2, \hspace{0.05cm}... \hspace{0.05cm},g_l \geq 0,\) from which
\[||g||_{L_1}=||g_1||_{L_1} \cdot ||g_2||_{L_1} \cdot ... \cdot ||g_l||_{L_1}  \cdot||G||_{L_1} =||G||^{l+1}_{L_1}=1, \hspace{0.5cm} ||\phi||_{L_1}=||g||_{L_1}^m=1,\]
entail
\[|h_{\epsilon}(x)-f_\epsilon(x)| \leq \int_{\mathbb{R}^m}{|h_{\epsilon}(x)-h_{\epsilon}(x-y)| \epsilon^{-m}\phi(\epsilon^{-1}y)dy} \leq\]
\[\leq 2||f||_{\infty}\epsilon^{-1}c(\tau)\int_{\mathbb{R}^m}{||y|| \epsilon^{-m}\phi(\epsilon^{-1}y)dy}=2||f||_{\infty}c(\tau)\int_{\mathbb{R}^m}{||y||\phi(y)dy}.\]
Cauchy-Schwarz inequality and \(||\phi||_{L_1}=1\) 
give
\[\int_{\mathbb{R}^m}{||y||\phi(y)dy} \leq (\int_{\mathbb{R}^m}{||y||^2\phi(y)dy})^{1/2}(\int_{\mathbb{R}^m}{\phi(y)dy})^{1/2}=(\int_{\mathbb{R}^m}{||y||^2\phi(y)dy})^{1/2}=(m\int_{\mathbb{R}}{y^2g(y)dy})^{1/2},\]
and
\[\int_{\mathbb{R}}{y^2g(y)dy}=\int_{0}^{1+\frac{l}{\epsilon_1}}{y^2g(y)dy} \leq (\int_{0}^{1+\frac{l}{\epsilon_1}}{y^4dy})^{1/2} \cdot (\int_{0}^{1+\frac{l}{\epsilon_1}}{g^2(y)dy})^{1/2} \leq (1+\frac{l}{\epsilon_1})^{5/2} \cdot \epsilon_1^{l/2} \cdot ||G||_{L_2}^{(l+1)/2},\]
employing \(||h_1*h_2||_{L_2} \leq ||h_1||_{L_2} \cdot ||h_2||_{L_2},\) which implies \(||g||_{L_2} \leq ||G||_{L_2} \prod_{1 \leq i \leq l}{||g_i||_{L_2}}=\epsilon_1^{l} \cdot ||G||_{L_2}^{l+1}.\) Lastly, \(l=8, \epsilon_1 \leq c(G)\epsilon, \epsilon \leq 1\) give
\[2\sup_{x \in \mathbb{R}^m}{|h_{\epsilon}(x)-f_\epsilon(x)|} \leq \epsilon ||f||_{\infty}.\]
\par
Consider now the first term in (\ref{c1}): because \(f_\epsilon\) is smooth and compactly supported, the Fourier inversion formula holds for it and all its partial derivatives (the Fourier transform here is \(\hat{u}(\xi)=\int_{\mathbb{R}^m}{u(x)e^{- 2\pi i x\cdot \xi}dx}\)). Fubini's theorem and (\ref{ineq1}) yield 
\[|\int{f_\epsilon d\mu}-\int{f_\epsilon d\nu}|=|\int_{\mathbb{R}^m}{\hat{f}_\epsilon(\theta)\mathbb{E}_{\mu-\nu}[e^{2\pi i\theta \cdot X}]d\theta}| \leq \int_{||\theta|| \leq 2\pi}{|\hat{f}_{\epsilon}(\theta)| \cdot \overline{G}(p,\frac{\theta}{2\pi})d\theta}+2\int_{||\theta||>2\pi}{|\hat{f_{\epsilon}}(\theta)|d\theta},\]
where
\begin{equation}\label{defg}
    \overline{G}(p,\theta):=(1+\sigma_0(\epsilon_0)\sum_{1 \leq i \leq n}{|\theta_{ii}|})^{C(\epsilon_0)p}\exp(c_6(\epsilon_0)p^{45}-2\delta_2(\epsilon_0)\log{n})+ \exp(-c_7p).
\end{equation}
The claimed bound ensues from this last inequality, and
\begin{equation}\label{p1}
    \int_{||\theta||>2\pi}{|\hat{f_{\epsilon}}(\theta)|d\theta} \leq  (\frac{2R^2}{mp})^{m/2} \cdot ||f||_{\infty},
\end{equation}
\begin{equation}\label{p2}
    \int_{||\theta|| \leq 2\pi}{|\hat{f}_{\epsilon}(\theta)| \cdot \overline{G}(p,\frac{\theta}{2\pi})d\theta} \leq (\frac{R^2}{c_{12}(\epsilon_0) \cdot mp})^{m/2} \cdot n^{p/c_{12}(\epsilon_0)} \cdot ||f||_{\infty},
\end{equation}
for \(c_{12}(\epsilon_0)>0.\) 
\vspace{0.3cm}
\par
\textit{Proof of (\ref{p1}):} For \(k \in \mathbb{N},\) Plancherel formula yields
\begin{equation}\label{plancherel}
    \int_{\mathbb{R}^m}{|\hat{f_\epsilon} (\xi)|^2 \cdot ||\xi||^{2k}d\xi}=(2\pi)^{-k}\sum_{|\kappa|=k}{\binom{k}{\kappa_1,\kappa_2, \hspace{0.05cm} ... \hspace{0.05cm},\kappa_m}\int_{\mathbb{R}^m}{(f_\epsilon^{(\kappa)}(x))^2dx}},
\end{equation}
where for \(\kappa=(\kappa_1,\kappa_2, \hspace{0.05cm}...\hspace{0.05cm},\kappa_m) \in \mathbb{Z}_{\geq 0}^{m},|\kappa|:=\kappa_1+\kappa_2+...+\kappa_m, u^{(\kappa)}=\frac{\partial^{k}u}{\partial{x^{\kappa_1}_1}...\partial{x^{\kappa_m}_m}}.\)
Note that
\[f_\epsilon^{(\kappa)}(x)=\int_{\mathbb{R}^m}{h_{\epsilon}(y)\epsilon^{-m}\phi^{(\kappa)}(\epsilon^{-1}(x-y))dy}=\int_{\mathbb{R}^m}{h_{\epsilon}(y)\epsilon^{-m-|\kappa|}\prod_{i \leq m}{g^{(\kappa_i)}(\epsilon^{-1}(x_i-y_i))}dy}=\]
\[=\int_{\mathbb{R}^m}h_{\epsilon}(x-z\epsilon)\epsilon^{-|\kappa|} \prod_{i \leq m}{g^{(\kappa_i)}(z_i)dz},\]
whereby
\[(f_\epsilon^{(\kappa)}(x))^2 \leq \epsilon^{-2|\kappa|} \cdot (\int_{\mathbb{R}^m}{h_{\epsilon}^2(x-z\epsilon)dz}) \cdot \int_{||x-z\epsilon|| \leq R_\epsilon}{(\phi^{(\kappa)}(z))^2dz},\]
for \(R_\epsilon=R+\epsilon\sqrt{m}\) as (\ref{defeps}) gives \(supp(h_{\epsilon}) \subset B_m(R_\epsilon).\) If \(||x|| \leq 2R_\epsilon,\) then
\[(f_\epsilon^{(\kappa)}(x))^2 \leq \epsilon^{-2|\kappa|} \cdot (\int_{\mathbb{R}^m}{h_{\epsilon}^2(x-z\epsilon)dz}) \cdot \int_{\mathbb{R}^m}{(\phi^{(\kappa)}(z))^2dz},\]
and else
\[(f_\epsilon^{(\kappa)}(x))^2 \leq \epsilon^{-2|\kappa|} \cdot (\int_{\mathbb{R}^m}{h_{\epsilon}^2(x-z\epsilon)dz}) \cdot \int_{||z|| \geq \frac{||x||}{2\epsilon}}{(\phi^{(\kappa)}(z))^2dz}.\]
\par
The last two inequalities imply
\[\int_{\mathbb{R}^m}{(f_\epsilon^{(\kappa)}(x))^2dx} \leq \epsilon^{-2|\kappa|} \cdot (\int_{||x|| \leq 2R_\epsilon}{h_{\epsilon}^2(x-z\epsilon)dzdx}) \cdot \int_{\mathbb{R}^m}{(\phi^{(\kappa)}(z))^2dz}+\epsilon^{-2|\kappa|} \cdot (\int_{\mathbb{R}^m}{h_{\epsilon}^2(z\epsilon)dz}) \cdot \int_{||z|| \geq \frac{||x||}{2\epsilon}}{(\phi^{(\kappa)}(z))^2dzdx} \leq\]
\[\leq \epsilon^{-2|\kappa|} \cdot V(B_m(2R_\epsilon)) \cdot ||f||^2_{\infty} \cdot \epsilon^{-m} \cdot V(B_m(R_\epsilon)) \int_{\mathbb{R}^m}{(\phi^{(\kappa)}(z))^2dz}+\epsilon^{-2|\kappa|} \cdot ||f||^2_{\infty} \cdot \epsilon^{-m} \cdot V(B_m(R_\epsilon))  \int_{||z|| \geq \frac{||x||}{2\epsilon}}{(\phi^{(\kappa)}(z))^2dzdx}\]
using \(||h_{\epsilon}||_{\infty} \leq ||f||_{\infty}, supp(h_{\epsilon}) \subset B_m(R_\epsilon),\) 
and from \(||h_1*h_2||_{L^2} \leq ||h_1||_{L^2} \cdot ||h_2||_{L^2},\) 
\[||g^{(k)}||_{L^2} \leq \prod_{1 \leq j \leq l}{||g_j||_{L^2}} \cdot ||G^{(k)}||_{L^2}=(\epsilon_1 ||G||_{L^2})^l \cdot ||G^{(k)}||_{L^2}.\]
Hence
\[||\phi^{(\kappa)}||_{L^2} \leq (\epsilon_1 ||G||_{L^2})^{lk} \cdot \prod_{1 \leq j \leq m}{||G^{(k_j)}||_{L^2}},\]
and as \(G\) is compactly supported, there exists \(c(k)>0\) such that
\[(G^{(j)}(x))^2 \leq \frac{c(k)}{(1+x^2)^j}, \hspace{0.5cm} 0 \leq j \leq k, x \in \mathbb{R},\]
whereby
\[\int_{\mathbb{R}^m}{(f_\epsilon^{(\kappa)}(x))^2dx} \leq \epsilon^{-2|\kappa|-m} \cdot V(B_m(2R_\epsilon)) \cdot ||f||^2_{\infty} \cdot V(B_m(R_\epsilon)) \cdot (\epsilon_1 ||G||_{L^2})^{2lk} \cdot \prod_{1 \leq j \leq m}{||G^{(k_j)}||^2_{L^2}}+\]
\[+\epsilon^{-2|\kappa|-m} \cdot ||f||^2_{\infty} \cdot V(B_m(R_\epsilon)) \cdot  \int_{||z|| \geq \frac{||x||}{2\epsilon}}{\prod_{1 \leq i \leq m}{\frac{c(k)\epsilon_1}{(1+z_i^2\epsilon_1^2)^{k_j}}}dzdx}.\]
The final bound for (\ref{plancherel}) becomes 
\[(2\pi)^{-k}\sum_{|\kappa|=k}{\binom{k}{\kappa_1,\kappa_2, \hspace{0.05cm} ... \hspace{0.05cm},\kappa_m}\int_{\mathbb{R}^m}{(f_\epsilon^{(\kappa)}(x))^2dx}} \leq \]
\[\leq m^k \epsilon^{-2|\kappa|-m} \cdot V(B_m(2R_\epsilon)) \cdot ||f||^2_{\infty} \cdot V(B_m(R_\epsilon)) \cdot (\epsilon_1 ||G||_{L^2})^{2lk} \cdot \max_{\sum_{1 \leq j \leq m}{k_j}=k}{\prod_{1 \leq j \leq m}{||G^{(k_j)}||^2_{L^2}}}+\]
\[+\epsilon^{-2|\kappa|-m} \cdot ||f||^2_{\infty} \cdot V(B_m(R_\epsilon)) \cdot  \int_{||z|| \geq \frac{||x||}{2\epsilon}}{
(\sum_{1 \leq i \leq m}{\frac{c(k)\epsilon_1}{1+z_i^2\epsilon_1^2})^kdzdx}}.\]
Consider the integral:
\[\int_{||z|| \geq \frac{||x||}{2\epsilon}}{
(\sum_{1 \leq i \leq m}{\frac{c(k)\epsilon_1}{1+z_i^2\epsilon_1^2})^kdzdx}}=\int_{||z|| \geq \frac{||x||}{2\epsilon}}{
(\sum_{1 \leq i \leq m}{\frac{c(k)\epsilon_1}{1+z_i^2\epsilon_1^2})^k \cdot V(B_m(2\epsilon ||z||))dz}}=\]
\[=(2\epsilon)^{m}V(B_m(1))\int_{[0,1]^m}{
(\sum_{1 \leq i \leq m}{\frac{c(k)\epsilon_1}{1+z_i^2\epsilon_1^2})^k \cdot ||z||^mdz} \leq (2\epsilon)^{m}V(B_m(1))\int_{[0,1]^m}{
(mc(k)\epsilon_1)^k \cdot ||z||^mdz}}.\]
For \(k=m,\) the two bounds amount to
\[||f||^2_{\infty} \cdot  V(B_m(R_\epsilon)) \cdot V(B(1)) \cdot [\epsilon^{-3m}(2R_\epsilon m)^m(\epsilon_1||G||_{L^2})^{2lm}C(m)+\epsilon^{-2m}(2mc(m)\epsilon_1)^m],\]
and so long as \(\epsilon_1 \leq c(m,R,\epsilon),\) it can be concluded that
\[\int_{\mathbb{R}^m}{|\hat{f_\epsilon} (\xi)|^2 \cdot ||\xi||^{2k}d\xi} \leq ||f||^2_{\infty} \cdot (\frac{R^2}{mp})^{2m},\]
from which (\ref{p1}) ensues due to Cauchy-Schwarz inequality,
\[\int_{||\theta||>2\pi}{|\hat{f_{\epsilon}}(\theta)|d\theta} \leq (\int_{\mathbb{R}^m}{|\hat{f_{\epsilon}}(\theta)|^2 \cdot ||\theta||^{2k}d\theta})^{1/2}(\int_{||\theta||>2\pi}{||\theta||^{-2k}d\theta})^{1/2}  \leq ||f||_{\infty} \cdot (\frac{2R^2}{mp})^{m}\]
due to
\[\int_{||\theta||>2\pi}{||\theta||^{-2k}d\theta}=A(\mathbb{S}_m(1))\int_{a>2\pi}{a^{-2k+m-1}da} \leq A(\mathbb{S}_m(1)) \cdot \frac{(2\pi)^{-m}}{m} \leq 3^m,\]
where \(\mathbb{S}_m(1)=\{x: x \in \mathbb{R}^m,||x||=1\}\) with area \(A(\mathbb{S}_m(1))=\frac{(2\pi)^{m/2}}{\Gamma(\frac{m}{2})} \leq (2\pi)^{m/2} \leq 3^m.\)
\vspace{0.3cm}
\par
\textit{Proof of (\ref{p2}):} All constants in this part depend on \(\epsilon_0:\) let \(c_{10},c_{11}>0\) such that 
\[\overline{G}(p,\theta) \leq 2(1+\sigma_0\sum_{1 \leq i \leq n}{|\theta_{ii}|})^{Cp}\exp(-c_{10}p||\theta||^2) \leq 2^{Cp} \cdot [1+(n\sigma_0||\theta||)^{Cp/2}] \cdot \exp(-c_{10}p||\theta||^2)\] 
for \(||\theta||\leq 1, p \leq c_{11}(\log{n})^{1/45}\) (recall (\ref{defg}) and use \((1+a)^q \leq 2^{q-1}(1+a^q),\sum_{1 \leq i \leq n}{|\theta_{ii}|} \leq n^{1/2}||\theta||\)), whereby under the aforesaid conditions, 
\[\overline{G}(p,\theta) \leq n^{\tilde{C}p}\cdot \exp(-\tilde{c}_{10}p||\theta||^2).\]
Finally,
\[\int_{||\theta|| \leq 2\pi}{|\hat{f}_\epsilon(\theta)| \cdot \overline{G}(p,\frac{\theta}{2\pi})d\theta} \leq n^{\tilde{C}p}\int_{||\theta|| \leq 2\pi}{|\hat{f}_\epsilon(\theta)| \cdot \exp(-\tilde{c}_{10}p \cdot ||\theta||^2)d\theta},\]
along with
\[|\hat{f}_\epsilon(\theta)| \leq ||f_\epsilon||_{L_1}=||f||_{L_1} \leq R^m \cdot V(B_m(1)) \cdot ||f||_{\infty},\]
and
\[\int_{\mathbb{R}^m}{\exp(-c||\theta||^2)d\theta}=(\int_{\mathbb{R}}{\exp(-cx^2)dx})^m=(\frac{\sqrt{\pi}}{2\sqrt{c}})^m \leq c^{-m/2},\]
entails (\ref{p2}) (as \(V(B_m(1))=\frac{\pi^{m/2}}{\Gamma(\frac{m}{2}+1)}\)).

\end{proof}

\subsection{Two Implications}\label{s3}

This subsection contains the proofs of Corollary~\ref{cor1} and Theorem~\ref{th7}.

\begin{proof} (Corollary~\ref{cor1})
Suppose without loss of generality that \(||f||_{\infty} \leq 1,\) and \(||f_l||_{\infty} \leq 2\) for all \(l \in \mathbb{N}.\) Since \(\mu_{n,p}(\mathbb{R}^m)=\nu_{n,p}(\mathbb{R}^m)=1,\) and both measures are nonnegative,
\[\lim_{l \to \infty}{(\int{f_ld\mu_{n,p}}-\int{f_ld\nu_{n,p}})}=\int{fd\mu_{n,p}}-\int{fd\nu_{n,p}}.\]
Let \(\phi:\mathbb{R}^m \to [0,1],\phi|_{B_m(\frac{c_2\sqrt{mp}}{2})}=1,\phi|_{B^c_m(c_2\sqrt{mp})}=0\) continuous\footnote{\(S^c\) denotes the complement of \(S\subset \mathbb{R}^m.\)} (e.g., for \(K_1\) closed, \(K_1 \subset K_2,d(K_1,K_2^c)>~0,\)
\begin{equation}\label{fzeroone}
    \phi(x)=\frac{d(x,K^c_2)}{d(x,K_1)+d(x,K^c_2)},
\end{equation}
is continuous with \(\phi(\mathbb{R}^m) \subset [0,1], \phi(K_1)=1, \phi(K^c_2)=0,\) where
\[d(S_1,S_2)=\inf_{s_1 \in S_1,s_2 \in S_2}{||s_1-s_2||}, \hspace{0.5cm} S_1,S_2 \subset \mathbb{R}^m:\]
the denominator is always positive since \(d(x,K_1)=0 \Leftrightarrow x \in K_1,\) and \(d(K_1,K_2^c)>0\)). Recall that \(||f_l||_{\infty} \leq 2,\) from which \(\frac{\phi f_l}{2} \in \mathcal{R}_{n,p},\) with Theorem~\ref{th55} entailing
\[|\int{\phi f_ld\mu_{n,p}}-\int{\phi f_ld\nu_{n,p}}| \leq 3c^m_1+4C(\epsilon_0)n^{-\epsilon_0/8}.\]
Then
\[|\int{f_ld\mu_{n,p}}-\int{f_ld\nu_{n,p}}| \leq |\int{\phi f_ld\mu_{n,p}}-\int{\phi f_ld\nu_{n,p}}|+ \int_{(B(\frac{c_2\sqrt{mp}}{2}))^c}{|f_l|d\mu_{n,p}}+\int_{(B(\frac{c_2\sqrt{mp}}{2}))^c}{|f_l|d\nu_{n,p}} \leq \]
\[\leq 3c^m_1+4C(\epsilon_0)n^{-\epsilon_0/8}+\mathbb{P}_{\mu_{n,p}}(||X||>\frac{c_2\sqrt{mp}}{2})+\mathbb{P}_{\nu_{n,p}}(||X||>\frac{c_2\sqrt{mp}}{2}) \leq 3c^m_1+4C(\epsilon_0)n^{-\epsilon_0/8}+\frac{C}{m \cdot (\frac{pc_2^2}{8})^2},\]
for \(p \geq C',\) by Chebyshev's inequality, and 
\[\mathbb{E}_{\mu_{n,p}}[||X||^2]=O(m), \hspace{0.3cm} Var_{\mu_{n,p}}(||X||^2)=O(m),\] 
consequences of the proof of Lemma~\ref{ll2}.
\end{proof}

\begin{proof}(Theorem~\ref{th7}) \(A \in Sym^d(n)\) if and only if \(F(A)=1,\) where
\[F(M)=\prod_{1 \leq i \leq n-1}{\chi_{\lambda_i(M) \ne \lambda_{i+1}(M)}}.\] 
The set of discontinuities of \(F,\) \(D:=\{M: F(M)=0\},\) is a null set under the Lebesgue measure \(\eta\) on \(\mathbb{R}^m\) because \(\eta\) is absolutely continuous with respect to the distribution of a \(n \times n\) Wigner matrix with i.i.d. entries that are centered standard normal random variables, and the latter assigns measure zero to the sets 
\[(\{M \in \mathbb{R}^{n \times n}, M=M^T, \lambda_i(M)=\lambda_{i+1}(M)\})_{1 \leq i \leq n}\] 
from the change of variables taking the entries of \(M\) to its eigenvalues and eigenvectors. For \(k \in \mathbb{N},\) let \(F_k:\mathbb{R}^{m} \to [0,1]\) be continuous with 
\[F_k(D)=1, \hspace{0.3cm} F_k(S_k)=0, \hspace{0.3cm}  S_k=\{M: \min_{1 \leq i \leq n-1}{(\lambda_i(M)-\lambda_{i+1}(M))}>\frac{1}{k^2}\}\] 
(use (\ref{fzeroone}) as well as the continuity of \(M \to \lambda_i(M),\) a consequence of Weyl's inequalities
\[\lambda_i(M_2)+\lambda_n(M_1-M_2) \leq \lambda_i(M_1) \leq \lambda_i(M_2)+\lambda_1(M_1-M_2),\]
which entail \(|\lambda_i(M_1) -\lambda_i(M_2)| \leq ||M_1-M_2|| \leq \sqrt{tr((M_1-M_2)^2)},\) the latter being the distance between \(M_1\) and \(M_2\) when viewed as vectors, i.e., elements of \(\mathbb{R}^{n^2}\)), \(\phi:\mathbb{R}^m \to [0,1]\) smooth, with \(\int_{\mathbb{R}^m}{\phi(x)dx}=1,\) and \(G_k:\mathbb{R}^{m} \to \mathbb{R}, G_k=F_k* \phi_{1/k}:\) i.e., 
\[G_k(x)=\int_{\mathbb{R}^{m}}{F_k(y)k^{m}\phi(k(x-y))dy}, \hspace{0.5cm} x \in \mathbb{R}^m.\]
Then
\[|G_k(x)-(1-F)(x)|=|\int_{\mathbb{R}^{m}}{(F_k(y)+F(y)-1) \cdot k^{m}\phi(k(x-y))dy}| \leq\]
\begin{equation}\label{ptbd}
    \leq \int_{M:\min_{1 \leq i \leq n-1}{(\lambda_i(M)-\lambda_{i+1}(M))} \leq \frac{1}{k^2}}{k^{m}\phi(k(x-M))dM} \leq \eta(\{M:\min_{1 \leq i \leq n-1}{(\lambda_i(M)-\lambda_{i+1}(M))}) \leq \frac{1}{k}\}=c(k) \to 0
\end{equation}
using \(F_k(y)+F(y)-1=0+1-1=0\) for \(y \in S_k,\) \(|F_k(y)+F(y)-1| \leq 1\) for \(y \in \mathbb{R}^m,\) and the translation invariance of \(\eta\) yielding \(\lim_{k \to \infty}{c(k)}=\eta(D)=0.\) Corollary~\ref{cor1} renders
\[\mathbb{P}(A \not \in Sym^d(n)) \leq \int{(1-F)d\mu_{n,p}}=|\int{(1-F)d\mu_{n,p}}-\int{(1-F)d\nu_{n,p}}| \leq 6c^m_1+4C(\epsilon_0)n^{-\epsilon_0/8}+\frac{c_3}{mp^2}\]
(the integral with respect to \(\nu_{n,p}\) is zero because this measure is absolutely continuous with respect to the Lebesgue measure \(\eta\)), giving the first part of the theorem. 
\par
The second part follows because 
\[\int{f\chi_{F=0}d\nu_{n,p}}=\int{f\chi_{F=0}d\nu_{n}}=0, \hspace{0.2cm}  |\int{f\chi_{F=1}d\nu_{n,p}}-\int{f\chi_{F=1}d\nu_{n}}| \leq Cp^{-1/2},\]
since \(\frac{c(2p,\mathbb{E}[a^4_{11}])}{C_p}-\frac{\sigma+16}{8}=O(p^{-1/2})\) from the proof of Lemma~\ref{lems}, and for \(a>0,\) 
\[|\exp(-x^2)-\frac{1}{a}\exp(-\frac{x^2}{a^2})|=|\exp(-x^2) \cdot (1-\exp(-\frac{x^2(1-a^2)}{a^2}))-(1-a) \cdot \frac{1}{a}\exp(-\frac{x^2}{a^2})| \leq\]
\[\leq \exp(-x^2) \cdot \frac{x^2|1-a^2|}{a^2}+|1-a| \cdot \frac{1}{a}\exp(-\frac{x^2}{a^2}).\]
Corollary~\ref{cor1} yields the desired convergence because \(f\chi_{F=1}=fF\) is the pointwise limit of \(f(1-G_k)\) (\(||f||_{\infty} \leq 1,\) \(f,G_k\) are continuous, and \(F-(1-G_k)=G_k-(1-F)\) together with (\ref{ptbd})). 
    
\end{proof}

\subsection{Haar Measures}\label{subsechaar2}

Let \(Z=(z_{ij})_{1 \leq i,j \leq n} \in \mathbb{R}^{n \times n}\) have i.i.d. entries with \(z_{11} \overset{d}{=} N(0,1).\) Since for \(1 \leq i \leq n,\)
\[||(\frac{1}{\sqrt{n}}Z)_i||^2=1+\frac{1}{n}\sum_{1 \leq i \leq n}{(z^2_{ij}-1)},\]
Bernstein's inequality (theorem \(2.8.1\) in Vershynin~\cite{vershynin}) entails
\[\mathbb{P}(\frac{1}{n}|\sum_{1 \leq i \leq n}{(z^2_{ij}-1)}| \geq t) \leq 2\exp(-K\min{(\frac{nt^2}{||z^2_{11}-1||^2_{\psi_1}},\frac{nt}{||z^2_{11}-1||_{\psi_1}}})),\]
for \(||X||_{\psi_1}=\inf{\{t>0: \mathbb{E}[\exp(-|X|/t)] \leq 2\}},\) whereby for some \(C,T>0,\) and all \(t \geq T,\)
\[\mathbb{P}(\frac{1}{n}\max_{1 \leq i \leq n}{|\sum_{1 \leq i \leq n}{(z^2_{ij}-1)}|} \geq t \cdot \sqrt{\frac{\log{n}}{n}}) \leq  2n^{1-Ct}.\]
Choose \(\alpha=\alpha_n=(\frac{2}{C}+T+1)\sqrt{\frac{\log{n}}{n}},\) for which 
\[\int_{Z \in (O(n))_\alpha}{dZ} \geq 1-2n^{-1}.\]
A union bound entails for \(t \geq T'=2 \cdot (\frac{2}{C}+T+1),\) 
\[\mathbb{P}(U \in O(n): \max_{1 \leq i,j \leq n}{|u_{ij}|} \geq t\cdot \sqrt{\frac{\log{n}}{n}}) \leq n^2\mathbb{P}(\frac{1}{\sqrt{n}}\max_{1 \leq i \leq n}{|z_{1i}|} \geq \frac{t}{2} \cdot \sqrt{\frac{\log{n}}{n}}) \leq n^2(1-\exp(-cn^{c'(1-t^2/4)}))\]
as \(\mathbb{P}(\max_{1 \leq i \leq n}{|z_{1i}|} \leq t\sqrt{\log{n}}) \geq (1-(2\pi)^{-1/2}\exp(-\frac{t^2\log{n}}{4}))^n\) since when \(a>0,\) 
\[\frac{1}{\sqrt{2\pi}}\int_{a}^{\infty}{\exp(-\frac{y^2}{2})dy} \leq \frac{1}{\sqrt{2\pi}} \cdot \exp(-\frac{a^2}{2})\int_{0}^{\infty}{\exp(-ay)dy}=\frac{1}{a\sqrt{2\pi}} \cdot \exp(-\frac{a^2}{2}) \leq \frac{1}{\sqrt{2\pi}} \cdot \exp(-\frac{a^2}{4}).\]
The quantum unique ergodicity version (\ref{bouryau}) proved by Bourgade and Yau~\cite{bourgadeyau} and the strong version (\ref{cipo}) by Cipolloni, Erdös and Schröder~\cite{cipolini} can be justified by a similar rationale (Gaussianity allows to reduce (\ref{cipo}) to \(M\) diagonal with trace zero and \(\max_{1 \leq i \leq n}{|M_{ii}|} \leq C;\) for the dot products \(u_i^Tu_j,\) use Cauchy-Schwarz inequality to deal with the errors).

\section{Trace Expectations}\label{traccov}

This section contains the proofs of Lemma~\ref{l1} and Theorem~\ref{th1} (subsections~\ref{sect11} and \ref{4sect.1}, respectively).

\subsection{Proof of Lemma~\ref{l1}}\label{sect11}

Consider the power series 
\[S(X)=\sum_{k \geq 1}{\beta(k,\gamma)X^k},\]
whose radius of convergence is \(\frac{1}{b(\gamma)}=(1+\sqrt{\gamma})^{-2}\) (\(\beta(k,\gamma)\) is the \(k^{th}\) moment of a probability distribution supported on \([a(\gamma),b(\gamma)],\) with positive mass on \([b(\gamma)-\epsilon,b(\gamma)]\) for all \(\epsilon>0\)). Then
\[S^2(X)=\sum_{r \geq 2}{(\sum_{1 \leq k \leq r-1}{\beta(k,\gamma)\beta(r-k,\gamma)})X^r},\]
whereby (\ref{momrec}) is equivalent to
\[S^2(X)=\sum_{r \geq 2}{\frac{\beta(r+1,\gamma)-(1+\gamma)\beta(r,\gamma)}{\gamma} \cdot X^r}=\frac{1}{\gamma X}(S(X)-X-X^2(\gamma+1))-\frac{1+\gamma}{\gamma}(S(X)-X)\]
or
\begin{equation}\label{Seq}
    \gamma X S^2(X)+((\gamma+1)X-1)S(X)+X=0.
\end{equation}
When \(x \in \mathbb{C},||x||<(1+\sqrt{\gamma})^{-2},\)
\[S(x)=\sum_{k \geq 1}{\int_{a(\gamma)}^{b(\gamma)}{x^ky^kf_\gamma(y)dy}}=\int_{a(\gamma)}^{b(\gamma)}{\frac{xy}{1-xy} f_\gamma(y)dy}=-1+\int_{a(\gamma)}^{b(\gamma)}{\frac{1}{1-xy} f_\gamma(y)dy}=-1-\frac{1}{x}m(\frac{1}{x}),\]
where \(m(z)=\int_{\mathbb{R}}{\frac{1}{x-z}f_\gamma(x)dx}\) is the Stieltjes transform of \(f_\gamma,\) entailing (\ref{Seq}) as
\[\gamma z m^2(z)-(1-\gamma-z)m(z)+1=0\]
for \(z \in \mathbb{C}^{+},\) and \(S,m\) are analytic in \(\{z \in \mathbb{C}, ||z||<(1+\sqrt{\gamma})^{-2}\} \cap \mathbb{C}^{+}\) (lemma \(3.11\) in \cite{baisilvbook}).

\subsection{Proof of Theorem~\ref{th1}}\label{4sect.1}

Fix \(p:\) (\ref{trcov}) yields
\[\mathbb{E}[tr(A^p)]=\sum_{(i_0,k_0,i_1,k_1, \hspace{0.05cm}... \hspace{0.05cm},i_{p-1},k_{p-1})}{\mathbb{E}[x_{i_0k_0}x_{i_1k_0}x_{i_1k_1}x_{i_2k_1}...x_{i_{p-1}k_{p-1}}x_{i_0k_{p-1}}]}.\]
By convention, \(i_l \in \{1,2, \hspace{0.05cm} ... \hspace{0.05cm}, n\}, k_l \in \{1,2, \hspace{0.05cm} ... \hspace{0.05cm}, N\}\) for \(0 \leq l \leq p-1;\) denote by \(\mathcal{G}(p,n,N)\) the set of directed graphs with edges 
\[(i_0,k_0),(i_1,k_0),(i_1,k_1),(i_2,k_1), \hspace{0.05cm} ... \hspace{0.05cm}, (i_{p-1},k_{p-1}),(i_0,k_{p-1}),\] 
and vertices \(i_l \in \{1,2, \hspace{0.05cm} ... \hspace{0.05cm}, n\}, k_l \in \{1,2, \hspace{0.05cm} ... \hspace{0.05cm}, N\}\) for \(0 \leq l \leq p-1.\) Let \(\mathbf{g}=(\mathbf{i},\mathbf{j})\) be a generic element of \(\mathcal{G}(p,n,N)\) with \(\mathbf{i}=(i_0,i_1, \hspace{0.05cm} ... \hspace{0.05cm},i_{p-1}),\mathbf{j}=(k_0,k_1, \hspace{0.05cm} ... \hspace{0.05cm},k_{p-1}),\) and
\[x_{\mathbf{g}}:=x_{i_0k_0}x_{i_1k_0}x_{i_1k_1}x_{i_2k_1}...x_{i_{p-1}k_{p-1}}x_{i_0k_{p-1}}.\]  
\par
Consider the map taking elements of \(\mathcal{G}(p,n,N)\) and reversing the orientation of the even-indexed edges: \(\newline \mathcal{W}:\mathcal{G}(p,n,N) \to \mathcal{D}(p,n,N),\) for \(\mathcal{D}(p,n,N)\) the set of directed graphs with edges 
\[(i_0,k_0),(k_0,i_1),(i_1,k_1),(k_1,i_2), \hspace{0.05cm} ... \hspace{0.05cm}, (i_{p-1},k_{p-1}),(k_{p-1},i_0),\] 
and vertices \(i_l \in \{1,2, \hspace{0.05cm} ... \hspace{0.05cm}, n\}, k_l \in \{1,2, \hspace{0.05cm} ... \hspace{0.05cm}, N\}\) for \(0 \leq l \leq p-1.\)
\par
Take the expectations induced by the elements of \(\mathcal{D}(p,n,N):\) symmetry yields only those with undirected edges of even multiplicity have non-vanishing contributions, a property invariant under this mapping. Call the subset of such cycles \(\mathcal{E}(p,n,N),\) 
i.e., the set of even cycles of size \(2l\) that belong to the range of \(\mathcal{W}.\) 
Return to the image of \(\mathcal{G}(p,n,N),\) and consider the dominant graphs in (\ref{trcov}). Since such graphs \(\mathbf{g}\) have edges solely of even multiplicity, their images under \(\mathcal{W}\) are elements of \(\mathcal{E}(p,n,N).\) Note
\begin{equation}\label{wineq}
    \mathbb{E}[x_{\mathbf{g}}] \leq \mathbb{E}[x_{\mathcal{W}(\mathbf{g})}],
\end{equation}
a consequence of independence and Hölder's inequality, \(\mathbb{E}[x_{11}^{2q}] \cdot \mathbb{E}[x_{11}^{2l}] \leq \mathbb{E}[x_{11}^{2q+2l}].\) If \(\mathcal{W}(\mathbf{g}) \in \mathcal{C}(p),\) then it has \(p\) pairwise distinct undirected edges, each of multiplicity \(2;\) using the recursion \(2.\) in section~\ref{treigen} that \((\mathcal{C}(l))_{l \in \mathbb{N}}\) satisfy, induction on \(l\) yields no element of \(\mathcal{C}(l)\) contains two copies of a directed edge, entailing equality occurs in (\ref{wineq}) (there is an even number of edges between two such copies as together they form an element of \((\mathcal{C}(l))_{l \in \mathbb{N}},\) and from this it ensues that these copies are identical in \(\mathbf{g}\)). It suffices to consider \(\mathcal{C}(p,n,N):=\mathcal{C}(p) \cap \mathcal{W}(\mathcal{D}(p,n,N)),\) i.e., the simple even cycles whose odd numbered vertices belong to \(\{1,2, \hspace{0.05cm} ... \hspace{0.05cm}, N\},\) and its even numbered to \(\{1,2, \hspace{0.05cm} ... \hspace{0.05cm}, n\}:\) all the expectations of such elements are \(1,\) and take 
\[\beta(p,n,N)=\sum_{0 \leq i \leq p-1}{|\mathcal{C}_{i}(p)| \cdot n^{i+1}N^{p-i}},\] 
where 
\[\mathcal{C}_{j}(p)=\{\mathbf{i}: \mathbf{i} \in \mathcal{C}(p), j+1=|\{i_{2k}, 0 \leq k \leq p-1\}|,p-j=|\{i_{2k+1}, 0 \leq k \leq p-1\}|\}.\] This constitutes a good estimate of the quantity of interest because for \(p=o(\sqrt{n}),\) 
\[|\mathcal{C}(p,n,N)| \in [\sum_{0 \leq i \leq p-1}{|\mathcal{C}_{i}(p)| \cdot \prod_{0 \leq j \leq p-i-1}{(N-j)}}\prod_{0 \leq j \leq i}{(n-p-j)},\sum_{0 \leq i \leq p-1}{|\mathcal{C}_{i}(p)| \cdot N^{p-i}n^{i+1}}],\]
whereby \(|\mathcal{C}(p,n,N)|=\beta(p,n,N)(1-o(1))\) using that for \(j \leq \sqrt{m},\)
\[m^j-(m-1)(m-2)...(m-j)=\sum_{1 \leq k \leq j}{m^{j-k}(-1)^{k-1}\sum_{1 \leq j_1<j_2<...<j_k \leq j}{j_1j_2...j_k}} \leq\]
\[\leq \sum_{1 \leq k \leq j}{m^{j-k} \cdot \frac{(1+2+...+j)^k}{k!}} \leq \sum_{1 \leq k \leq j}{m^{j-k}\cdot \frac{j^{2k}}{k!}} \leq m^{j}(\exp(\frac{j^2}{m})-1).\]
\par
The recurrent description \(2.\) in section~\ref{treigen} of \(\mathcal{C}(p)\) gives a partition of \(\mathcal{C}_j(p)\) into three subsets, implying
\[\beta(p+1,n,N)=nN^{p+1}+n^{p+1}N+\sum_{1 \leq i \leq p}{(|\mathcal{C}_{i}(p)|+|\mathcal{C}_{i-1}(p)|+\sum_{0 \leq k \leq i-1, 1 \leq a \leq p-1}{|\mathcal{C}_{k}(a)| \cdot |\mathcal{C}_{i-k-1}(p-a)|}) \cdot n^{i+1}N^{p+1-i}}.\]
Let \(\beta_0(p,X)=\sum_{0 \leq i \leq p-1}{|\mathcal{C}_{i}(p)| \cdot X^{i+1}},\) for which this, upon division by \(N^{p+2},\) entails when \(X=n/N,\)
\[\beta_0(p+1,X)=X+X^{p+1}+\sum_{1 \leq i \leq p-1}{(|\mathcal{C}_{i}(p)|+|\mathcal{C}_{i-1}(p)|+\sum_{0 \leq k \leq i-1, 1 \leq a \leq p-1}{|\mathcal{C}_{k}(a)| \cdot |\mathcal{C}_{i-k-1}(p-a)|}) \cdot X^{i+1}}=\]
\[=X^{p+1}+\beta_0(p,X)+X(\beta_0(p,X)-X^p)+\sum_{1 \leq i \leq p-1}{\sum_{0 \leq k \leq i-1, 1 \leq a \leq p-1}{|\mathcal{C}_{k}(a)| \cdot |\mathcal{C}_{i-k-1}(p-a)|} \cdot X^{i+1}}\]
or
\[\beta_0(p+1,X)=(1+X)\beta_0(p,X)+\sum_{1 \leq a \leq p-1}{\beta_0(a,X)\beta_0(p-a,X)}.\]
Induction and Lemma~\ref{l1} give
\[\beta_0(p,X)=X\beta(p,X)=X\sum_{0 \leq r \leq p-1}{\frac{1}{r+1}\binom{p}{r} \binom{p-1}{r}X^{r+1}},\] 
providing the leading term in (\ref{tracecovarnew}) (the last equality is a consequence of (\ref{betadef})). For the second-order terms, employ (\ref{wineq}) and the bound for the Wigner case, i.e.,
\[\mathbb{E}[tr(B^p)]=n(1+O(\frac{p^2}{n}))\]
for \(p=o(\sqrt{n}), B=n^{-1/2}(b_{ij})_{1 \leq i,j \leq n}, B=B^T, (b_{ij})_{1 \leq i \leq j \leq n}\) i.i.d., \(b_{11} \overset{d}{=} x_{11}\) (see \cite{sinaisosh}). 
This concludes the proof of Theorem~\ref{th1}.

\section*{Appendix}

This section looks into the asymptotic behavior of the functions \(f,s,\mathcal{A}_1,\mathcal{A}_2,\) defined by (\ref{keyeq}),(\ref{formm}), and (\ref{ref1}), respectively.

\begin{lemma}\label{fpklemma}
    For \(k,p \in \mathbb{N},1 \leq k \leq p,\)
    \begin{equation}\label{rbdratiodens}
    f_{p,k+1}=C_{p-1} \cdot \frac{k}{2^{k-1}}\prod_{0 \leq j \leq \lfloor \frac{k-1}{2} \rfloor-1}{(1-\frac{2\lfloor \frac{k}{2} \rfloor -1}{2p-3-2j})}.
\end{equation}
    In particular, there are absolute constants \(c_{10},c_{11},c_{12},c_{13}>0\) such that for all such \(k,p,\) 
\begin{equation}\label{rbdgrowthdpk}
    \exp(-\frac{c_{10}k^2}{p}-\frac{c_{11}}{p}) \leq \frac{f_{p,k+1}}{C_{p-1} \cdot \frac{k}{2^{k-1}}} \leq \exp(-\frac{c_{12}k^2}{p}+\frac{c_{13}}{p}).
\end{equation}  
\end{lemma}

\begin{proof}
By definition (\ref{keyeq}),
\begin{equation}\label{keyerq}
    f_{p,k+1}=\binom{2p-k-1}{p-k}-\binom{2p-k-1}{p-k-1}=\frac{k}{2p-k}\binom{2p-k}{p-k},
\end{equation}
for \(1 \leq k \leq p\) (with \(\binom{n}{-1}=0\)). This entails that for \(2 \leq k \leq p,\)
\begin{equation}\label{recc}
    f_{p,k+1}=f_{p,k}-f_{p-1,k-1}
\end{equation}
since
\[f_{p,k}-f_{p,k+1}=(\binom{2p-k-1}{p}-\binom{2p-k-1}{p})-(\binom{2p-k-2}{p}-\binom{2p-k-2}{p})=\binom{2p-k-2}{p-1}-\binom{2p-k-2}{p-1},\]
and (\ref{recc}) renders
\begin{equation}\label{alphaf}
    f_{p,k+1}=\tilde{\alpha}(p-1,k-1),
\end{equation}
where
\begin{equation}\label{alphatilde}
    \tilde{\alpha}(p,k):=C_{p} \cdot \frac{k+1}{2^{k}}\prod_{0 \leq j \leq \lfloor \frac{k}{2} \rfloor-1}{(1-\frac{2\lfloor \frac{k+1}{2} \rfloor -1}{2p-1-2j})}.
\end{equation}
To see (\ref{alphaf}), note the formula holds for \(k \in \{1,2\}\) (\(f_{p,2}=C_{p-1},f_{p,3}=\frac{2}{2p-2}\binom{2p-2}{p-2}=\frac{1}{p}\binom{2p-2}{p-1}=C_{p-1}\)), 
and it suffices to show \(\tilde{\alpha}\) also satisfies the recurrence encapsulated by (\ref{recc}) to conclude the claim via induction on \(k:\) this follows from the following identities for \(k \geq 1,\)
\[\tilde{\alpha}(p,2k)-\tilde{\alpha}(p,2k+1)=C_p \cdot [(2k+1) \prod_{0 \leq j \leq k-1}{\frac{p-k-j}{4p-2-4j}}-(k+1)\prod_{0 \leq j \leq k-1}{\frac{p-k-j-1}{4p-2-4j}}]=\]
\[=C_p \cdot \frac{\prod_{0 \leq j \leq k-2}{(p-k-j)}}{\prod_{0 \leq j \leq k-1}{(4p-2-4j)}} \cdot [(2k+1) \cdot (p-k)-(p-2k) \cdot (k+1)]=\]
\[=C_p \cdot \frac{\prod_{0 \leq j \leq k-2}{(p-k-j)}}{\prod_{0 \leq j \leq k-1}{(4p-2-4j)}} \cdot k(p+1)=C_{p-1}\cdot k \cdot \prod_{0 \leq j \leq k-2}{\frac{p-k-j}{4p-6-4j}}=\tilde{\alpha}(p-1,2k-1)\]
because \(\frac{C_p}{C_{p-1}}=\frac{4p-2}{p+1},\) and
\[\tilde{\alpha}(p,2k+1)-\tilde{\alpha}(p,2k+2)=
C_p \cdot [(k+1)\prod_{0 \leq j \leq k-1}{\frac{p-k-1-j}{4p-2-4j}-(2k+3)\prod_{0 \leq j \leq k}{\frac{p-k-1-j}{4p-2-4j}}}=\]
\[=C_p\cdot \prod_{0 \leq j \leq k-1}{\frac{p-k-1-j}{4p-2-4j}} \cdot [(k+1)-(2k+3) \cdot \frac{p-2k-1}{4p-2-4k}]
=C_p\cdot \frac{(2k+1)(p+1)}{4p-2-4k}\prod_{0 \leq j \leq k-1}{\frac{p-k-1-j}{4p-2-4j}}=\]
\[=C_{p-1}(2k+1)\prod_{0 \leq j \leq k-1}{\frac{p-k-j-1}{4p-6-4j}}=\tilde{\alpha}(p-1,2k).\]
\par
Showing \(\frac{\tilde{\alpha}(p,k)}{C_p \cdot \frac{k+1}{2^k}}\) admits lower and upper bounds of the type appearing in (\ref{rbdgrowthdpk}) is enough to derive it in light of (\ref{alphaf}): this follows from (\ref{alphatilde}) inasmuch as for \(2|k,P \geq \max{(k-1,5)},\)
\[\sum_{0 \leq j \leq k/2}{\frac{1}{P-j}}=-\log{(1-\frac{k/2+1}{P+1})}+\frac{k+2}{4P(P-k/2)}+O(\frac{k}{P(P-k/2)^2}),\]
\[-k\log{(1-\frac{k/2+1}{P+1})}+\frac{k(k+2)}{4P(P-k/2)}=\Theta(\frac{k^2}{P}), \hspace{0.5cm} \frac{k^2}{P(P-k/2)^2}=O(\frac{1}{P})\]
(use these results for \((2(\lfloor \frac{k}{2} \rfloor-1),p-\frac{1}{2}),\) and \(2(\lfloor \frac{k}{2} \rfloor-1) \leq 2(\frac{k}{2}-1) \leq p-2 \leq p-\frac{3}{2}\)): the second claim ensues from \(x \to \frac{-\log{(1-x)}}{x}=\sum_{n \geq 1}{\frac{x^{n-1}}{n}}\) increasing in \((0,\frac{2}{3}],\) giving for \(k \leq P+1,\)
\[-k\log{(1-\frac{k/2+1}{P+1})}+\frac{k(k+2)}{4P(P-k/2)} \leq \frac{3\log{3}}{2} \cdot \frac{k(k/2+1)}{P+1}+\frac{k(k+2)}{4P(P-k/2)} 
 \leq \frac{k(k+2)}{4(P+1)} \cdot (3\log{3}+\frac{2(P+1)}{P(P-1)})\] 
 \[-k\log{(1-\frac{k/2+1}{P+1})}+\frac{k(k+2)}{4P(P-k/2)} \geq 
\frac{k(k+2)}{4(P+1)} \cdot (2+\frac{P+1}{P(P-1)})\] 
while the first arises from
\[\sum_{0 \leq n \leq M-m}{\frac{1}{n+m}}=\log{\frac{M+1}{m}}+\frac{M-m+1}{2mM}+O(\frac{M-m}{m^2M})\]
for \(m=P-k/2 \geq \max{(k/2-1,5-k/2)} \geq 2,M=P:\)
\[\sum_{0 \leq n \leq M-m}{\frac{1}{n+m}}=\log{\frac{M+1}{m}}+\sum_{0 \leq n \leq M-m}{(\frac{1}{n+m}-\log{(1+\frac{1}{n+m})})}=\]
\[=\log{\frac{M+1}{m}}+\frac{1}{2}\sum_{0 \leq n \leq M-m}{\frac{1}{(n+m)^2}}+O(\sum_{0 \leq n \leq M-m}{\frac{1}{(n+m)^3}})=\]
\[=\log{\frac{M+1}{m}}+\frac{1}{2} \cdot [\frac{M-m+1}{mM}+O(\frac{M-m}{m^2M})]+O(\frac{M-m}{m^2M})=\log{\frac{M+1}{m}}+\frac{M-m+1}{2mM}+O(\frac{M-m}{m^2M}),\]
when \(m \in \mathbb{R}, m \geq 2,M-m \in \mathbb{N},\) using 
\[\frac{M-m+1}{(M+1)m}=\int_{m}^{M+1}{\frac{1}{x^2}dx} \leq \sum_{0 \leq n \leq M-m}{\frac{1}{(n+m)^2}} \leq \int_{m-1}^{M}{\frac{1}{x^2}dx}=\frac{M-m+1}{M(m-1)},\]
\[\frac{2}{m^2}-\frac{2}{(M+1)^2}=\int_{m}^{M+1}{\frac{1}{x^3}dx} \leq \sum_{0 \leq n \leq M-m}{\frac{1}{(n+m)^3}} \leq \int_{m-1}^{M}{\frac{1}{x^3}dx}=\frac{2}{(m-1)^2}-\frac{2}{M^2}.\]
\end{proof}

\begin{lemma}\label{lems}
    \[\lim_{p \to \infty}{\frac{s(p)}{C_{p-1}}}=12,\hspace{0.5cm} \lim_{p \to \infty}{\frac{\mathcal{A}_1(p)}{C_{p-1}}}=\frac{\sigma(\sigma+6)}{2}, \hspace{0.5cm} \lim_{p \to \infty}{\frac{\mathcal{A}_2(2p)}{C^2_{p-1}}}=6(2\sigma+9),\]
    where
    \begin{equation}\label{sigmadef}
        \sigma=\sum_{b \geq 0}{\frac{s(b)}{4^b}}.
    \end{equation}
\end{lemma}

\begin{proof}
A key ingredient behind these convergences is (\ref{rbdgrowthdpk}). Since
\[\frac{s(p)}{C_{p-1}}=\sum_{1 \leq k \leq p}{\frac{kf_{p,k+1}}{C_{p-1}}} \leq \exp(\frac{c_{13}}{p-1})\sum_{1 \leq k \leq p}{\frac{k^2}{2^{k-1}}},\]
\[\frac{s(p)}{C_{p-1}}=\sum_{1 \leq k \leq p}{\frac{kf_{p,k+1}}{C_{p-1}}} \geq \sum_{1 \leq k \leq (p-1)^{1/4}}{\frac{kf_{p,k+1}}{C_{p-1}}} \geq \exp(-\frac{c_{10}}{(p-1)^{1/2}}-\frac{c_{11}}{p-1})\sum_{1 \leq k \leq p^{1/4}}{\frac{k^2}{2^{k-1}}},\]
and for \(|x|<1,\)
\[\sum_{k \geq 1}{kx^{k-1}}=(\sum_{k \geq 0}{x^{k}})'=(\frac{1}{1-x})'=\frac{1}{(1-x)^2}, \hspace{0.2cm} \sum_{k \geq 1}{k(k+1)x^{k-1}}=(\sum_{k \geq 1}{x^{k+1}})''=(\frac{x^2}{1-x})''=(\frac{2x-x^2}{(1-x)^2})'=\frac{2}{(1-x)^3},\]
the first result follows from the two bounds above together with
\[\sum_{k \geq 1}{\frac{k^2}{2^{k-1}}}=\sum_{k \geq 1}{\frac{k(k+1)}{2^{k-1}}}-\sum_{k \geq 1}{\frac{k}{2^{k-1}}}=\frac{2}{1/8}-\frac{1}{1/4}=12.\] 
For \(\mathcal{A}_1,\mathcal{A}_2,\) the following result is vital:
\begin{equation}\label{partiala}
    \lim_{p \to \infty}{\frac{1}{C_p}\sum_{0 \leq b \leq p}{s(b)C_{p-b}}}=\sigma+6 \in (6,\infty)
\end{equation}
(note \(s(b) \leq 12\exp(c_{13})C_{b-1} \leq C \cdot \frac{4^b}{b^{3/2}}\) guarantees \(\sigma<\infty\)). An immediate consequence of this is
\[\lim_{p \to \infty}{\frac{\mathcal{A}_2(2p)}{C^2_{p-1}}}=2\cdot [2 \cdot \frac{12}{4}\cdot (\sigma+6)-(\frac{12}{4})^2]=6(2\sigma+9).\]
It remains to justify (\ref{partiala}) and use it to derive the asymptotic behavior of \(\mathcal{A}_1.\)
\par
For \(\epsilon>0\) and \(p \geq p_1(\epsilon),\)
\[\frac{1}{C_p}\sum_{0 \leq b \leq p}{s(b)C_{p-b}} \leq (1-\frac{3\sqrt{p}}{2(p-\sqrt{p})})\sum_{0 \leq b \leq \sqrt{p}}{\frac{s(b)}{4^b}}+\frac{\exp(c_4)}{C_p}\sum_{\sqrt{p}<b<p-\sqrt{p}}{C_bC_{p-b}}+\frac{1}{C_p}\sum_{p-\sqrt{p} \leq b \leq p}{s(b)C_{p-b}} \leq\]
\[\leq (1-\frac{3\sqrt{p}}{2(p-\sqrt{p})})\sum_{0 \leq b \leq \sqrt{p}}{\frac{s(b)}{4^b}}+2\exp(c_4)\sum_{b>\sqrt{p}}{\frac{C}{b^{3/2}}}+\frac{3+\epsilon}{C_p}\sum_{p-\sqrt{p} \leq b \leq p}{C_bC_{p-b}}\]
from 
\[\frac{C_{p+1}}{C_p}=\frac{p+1}{p+2} \cdot \frac{\binom{2p+2}{p+1}}{\binom{2p}{p}}=\frac{4p+2}{p+2}, \hspace{0.2cm} \frac{C_p}{4^bC_{p-b}}=\prod_{1 \leq k \leq b}{(1-\frac{3}{2(p-b+j)})} \geq (1-\frac{3}{2(p-b)})^{b} \geq 1-\frac{3b}{2(p-b)},\]
\(s(b) \leq 12\exp(c_{13})C_{b-1} \leq (3+\epsilon)C_{b}\) for \(b \geq b(\epsilon),\) as well as \(\frac{C_aC_b}{C_{a+b}} \leq \frac{C}{\min{((a+1)^{3/2},(b+1)^{3/2})}}\) for \(a,b \geq 0,\) whereby
\[\limsup_{p \to \infty}{\frac{1}{C_p}\sum_{0 \leq b \leq p}{s(b)C_{p-b}}} \leq \sigma+6\]
because when \(p \geq 5,\)
\begin{equation}\label{lb}
    \frac{1}{C_p}\sum_{p-\sqrt{p} \leq b \leq p}{C_bC_{p-b}} \leq \frac{1}{2C_p}\sum_{b \geq 0}{C_bC_{p-b}}=\frac{C_{p+1}}{2C_p} \leq 2;
\end{equation}
similarly, for arbitrary \(\epsilon>0\) and \(p \geq p_2(\epsilon),\)
\[\frac{1}{C_p}\sum_{0 \leq b \leq p}{s(b)C_{p-b}} \geq \sum_{0 \leq b \leq \sqrt{p}}{\frac{s(b)}{4^b}}+\frac{1}{C_p}\sum_{p-\sqrt{p} \leq b \leq p}{s(b)C_{p-b}} \geq \sum_{0 \leq b \leq \sqrt{p}}{\frac{s(b)}{4^b}}+\frac{3-\epsilon}{C_p}\sum_{p-\sqrt{p} \leq b \leq p}{C_bC_{p-b}},\]
from which 
\[\liminf_{p \to \infty}{\frac{1}{C_p}\sum_{0 \leq b \leq p}{s(b)C_{p-b}}} \geq \sigma+6,\]
due to
\[\frac{C_{p+1}}{C_p}-\frac{2}{C_p}\sum_{p-\sqrt{p} \leq b \leq p}{C_bC_{p-b}}=\frac{C_{p+1}}{C_p}-\frac{1}{C_p}\sum_{p-\sqrt{p} \leq b \leq p}{C_bC_{p-b}}-\frac{1}{C_p}\sum_{0 \leq b \leq \sqrt{p}}{C_bC_{p-b}}=\]
\begin{equation}\label{ub}
    =\frac{1}{C_p}\sum_{\min{(b,p-b)}>\sqrt{p}}{C_bC_{p-b}} \leq 2C\sum_{b>\sqrt{p}}{\frac{1}{b^{3/2}}}
\end{equation}
for \(p \geq 5.\) This completes the proof of (\ref{partiala}).
\par
Return now to \(\mathcal{A}_1:\) since for any \(c \in (0,1),\)
\[\lim_{p \to \infty}{\frac{1}{C_{p}}\sum_{a>cp}{s(a)C_{p-a}}}=6, \hspace{0.5cm} \lim_{p \to \infty}{\frac{1}{C_{p}}\sum_{a \leq cp}{s(a)C_{p-a}}}=\sigma\]
(via a similar rationale to the one above: the range \(a \in (cp,p-\sqrt{p})\) makes a negligible contribution, and  \(\lim_{p \to \infty}{\frac{1}{C_p}\sum_{p-\sqrt{p} \leq a \leq p}{C_aC_{p-a}}}=2\) from (\ref{lb}) and (\ref{ub})), these together with (\ref{partiala}) yield
\[\lim_{p \to \infty}{\frac{2}{C_{p-1}}\sum_{0 \leq a \leq \frac{p-1}{4},0 \leq b \leq \frac{p-1}{2}}{s(a)s(b)C_{p-2-a-b}}}=\frac{\sigma^2}{2},\]
\[\lim_{p \to \infty}{\frac{2}{C_{p-1}}\sum_{\frac{p-1}{4} < a \leq \frac{p-1}{4},0 \leq b \leq \frac{p-1}{2}}{s(a)s(b)C_{p-2-a-b}}}=3\sigma,\]
from
\[\frac{2}{C_{p-1}}\sum_{0 \leq a \leq \frac{p-1}{4}}{s(a)\sum_{b \leq \frac{p-1}{2}}{s(b)C_{p-2-a-b}} \geq \frac{2}{C_{p-1}}\sum_{0 \leq a \leq \frac{p-1}{4}}{(\sigma-\epsilon)s(a)C_{p-2-a}} \geq \frac{2C_{p-2}}{C_{p-1}}(\sigma-\epsilon)^2},\]
\[\frac{2}{C_{p-1}}\sum_{0 \leq a \leq \frac{p-1}{4}}{s(a)\sum_{b \leq \frac{p-1}{2}}{s(b)C_{p-2-a-b}} \leq \frac{2}{C_{p-1}}\sum_{0 \leq a \leq \frac{p-1}{4}}{(\sigma+\epsilon)s(a)C_{p-2-a}} \leq \frac{2C_{p-2}}{C_{p-1}}(\sigma+\epsilon)^2},\]
and
\[\frac{2}{C_{p-1}}\sum_{0 \leq b \leq \frac{p-1}{2}}{s(b)\sum_{a>\frac{p-1}{4}}{s(a)C_{p-2-a-b}} \geq \frac{2(6-\epsilon)}{C_{p-1}}}\sum_{0 \leq b \leq \frac{p-1}{2}}{s(b)C_{p-2-b}} \leq \frac{2(6-\epsilon)(\sigma-\epsilon) C_{p-2}}{C_{p-1}},\]
\[\frac{2}{C_{p-1}}\sum_{0 \leq b \leq \frac{p-1}{2}}{s(b)\sum_{a>\frac{p-1}{4}}{s(a)C_{p-2-a-b}}} \leq \frac{2(6+\epsilon)}{C_{p-1}}\sum_{0 \leq b \leq \frac{p-1}{2}}{s(b)C_{p-2-b}} \leq \frac{2(6+\epsilon)(\sigma+\epsilon) C_{p-2}}{C_{p-1}}\]
for \(p \geq p(\epsilon),\)  using \(\frac{p-2}{2} \leq p-2-a \leq p-2 \leq 4 \cdot \frac{p-1}{4}.\) This completes the proof of the lemma.
\end{proof}

\bibliographystyle{unsrtnat}
\bibliography{references}

\end{document}